\def\@abssec#1{\vspace{.05in}\footnotesize \parindent .2in
{\bf #1. }\ignorespaces}
\newtheorem{theorem}{Theorem}[section]
\newtheorem{lemma}[theorem]{Lemma}
\newtheorem{proposition}[theorem]{Proposition}
\newtheorem{definition}[theorem]{Definition}
\DeclareMathOperator{\divg}{div}
\newcommand{\R}{\ensuremath{\mathbb{R}}}
\newcommand{\N}{\ensuremath{\mathbb{N}}}
\newcommand{\MM}{\ensuremath{\mathcal{M}}}
\newcommand{\Id}{\ensuremath{\mathrm{Id}}}
\newcommand{\set}[1]{\left\{#1\right\}}
\newcommand{\dd}{\mathrm{d}}
\allowdisplaybreaks \numberwithin{equation}{section}
\begin{document}

\title[Boussinesq system with measure forcing]{Boussinesq system with measure forcing}
\author{Piotr B. Mucha}
\address{Institute of Applied Mathematics and Mechanics, University of Warsaw, ul. Banacha 2 Warszawa, Poland}
\email{p.mucha@mimuw.edu.pl}
\author{Liutang Xue}
\address{School of Mathematical Sciences and LMSC of MOE, Beijing Normal University, Beijing 100875, P.R. China}
\email{xuelt@bnu.edu.cn}
\subjclass[2010]{Primary 76D03, 35Q35, 35Q86.}
\keywords{Measure force, Boussinesq system, global existence and uniqueness, Lagrangian coordinates.}
\date{\today}
\maketitle

\begin{abstract}
We address a question concerning the issue of existence to a Boussinesq type system with a heat source. The problem is studied in the whole two dimensional
plane and the heat source is a measure transported by the flow. For arbitrary
initial data, we prove global in time existence of unique regular solutions.
Measure being a heat source limits regularity of constructing solutions and make us work in a non-standard framework of inhomogeneous Besov spaces of the
$L^\infty(0,T;B^s_{p,\infty})$-type. Application of the Lagrangian coordinates
yields uniqueness omitting difficulties with comparison of  measures.
\end{abstract}

\section{Introduction}

Heat conducting fluids are an important part of the fluid mechanics. In the highest generality they are complete from the viewpoint of the conservation of the total energy. Viscous fluids generate internal
friction and produce thermal effects, and vice versa variability of the temperature creates a motion of the fluid. In the general form we
distinguish the Navier-Stokes-Fourier model for the compressible flows:
\begin{equation}\label{CNS}
 \begin{cases}
  \partial_t \rho +\divg(\rho u)=0, \\
  \partial_t(\rho u) +\divg(\rho u \otimes u) -\divg S(\theta,\nabla u)+\nabla p(\rho, \theta)= \rho f,\\
  \partial_t\big(\rho s(\rho,\theta)\big) + \divg \big(\rho s(\rho,\theta)u\big)
  + \divg\big(\frac{q(\theta,\nabla \theta)}{\theta}\big) = \sigma.
 \end{cases}
\end{equation}
In short, $\rho$, $u$, $\theta$ are sought quantities: the density, velocity and the
temperature of the fluid. Functions $p(\cdot,\cdot)$ and $s(\cdot,\cdot)$ are the pressure and entropy.
The stress tensor $S$ is given in the Newtonian form and the energy  flux is given in the
Fourier form $q=-\kappa(\theta) \nabla \theta$ and the entropy production $\sigma=
\frac{1}{\theta}(S:\nabla u + \frac{\kappa(\theta) |\nabla \theta|^2}{\theta})$ (for more details see \cite{F}).

Nowadays mathematics is able to deliver just existence of weak solutions \cite{F,FN,FMNP} for the system (\ref{CNS}),
and regular solutions are just possible to get for small data \cite{MN,D,VZ}.
The system looks too much complex. It makes us look for a reduction of it. Taking a low Mach number limit
(see \cite{FN2,DM-mono}), we  obtain an incompressible limit which takes into account weak thermal effects, known as the Boussinesq approximation
\begin{equation}\label{BoussEq-1}
\begin{cases}
  \partial_t \theta + v\cdot\nabla \theta - \Delta \theta = f, \\
  \partial_t v + v\cdot\nabla v - \Delta v + \nabla p = \theta \,e_d, \\
  \mathrm{div}\, v=0,
\end{cases}
\end{equation}
where $d=2,3$, $v$ is the velocity vector field, $\theta$ is the temperature field, and $e_d$ is the last canonical vector of $\R^d$.
In the simplest explanation, the above system \eqref{BoussEq-1} is the incompressible Navier-Stokes equations coupled with the heat equation with a drift given by the velocity,
forcing for the momentum equation is defined by the change of temperature in the direction of the gravitational force (i.e. $e_d$-direction).
For the mathematical study of system \eqref{BoussEq-1} with $f=0$, one can see \cite{CanD,Guo,Temam,BS} for the global well-posedness results (with small data assumption in 3D case).

What is important to underline is the following fact, the system \underline{does not}
preserve the energy, in \cite{BS} the authors proved that for $f=0$ in system \eqref{BoussEq-1} the total energy $\|u(t)\|_{L^2}^2$ may grow in time. It makes our mathematical analysis more interesting. The dynamics is nontrivial for long time and most of norms of solutions are expected to growth in time.

Let us explain the goal of our paper. We want to consider a special case of system (\ref{BoussEq-1}) as the force is given by a heat source transported by the flow:
\begin{equation}\label{BoussEq}
\begin{cases}
  \partial_t \mu + v \cdot \nabla \mu =0, \\
  \partial_t \theta + v\cdot\nabla \theta - \Delta \theta = \mu, \\
  \partial_t v + v\cdot\nabla v - \Delta v + \nabla p = \theta \,e_d, \\
  \mathrm{div}\, v=0, \\
  (\mu,\theta,v)|_{t=0}(x)=(\mu_0,\theta_0,v_0)(x),
\end{cases}
\end{equation}
where the system is consider in the whole space $\R^d$.

In the most interesting case one can think that $\mu$ describes a combustion distributed by some measure like a linear combination of some Dirac atoms.
The physical explanation can be a modeling the movement of water after putting some chemical material like Sodium (Na) into a square pool fully containing water.

The main goal of the paper is to consider large solutions to construct
global in time solvability. Since the Millennium Problem concerning the
regularity of weak solutions to the three dimensional Navier-Stokes system is still open, we here concentrate our attention on the case of two spacial dimension.
The key point is to consider general data admitting initial heat production as a Radon measure and large initial data of velocity and temperature.

Our result is the global in time existence and uniqueness of regular solution for the system \eqref{BoussEq} at the 2D case.
\begin{theorem}\label{thm:2Dgwp}
    Let $\mu_0\in \mathcal{M}_+(\R^2)$ with $\mathrm{supp}\,\mu_0\subset B_{R_0}(0)$ for some $R_0>0$.
For each $\sigma\in ]0,2[$, let $\theta_0 \in L^1\cap B^{2-\sigma}_{\frac{4}{4-\sigma},\infty}(\R^2)$ with $\theta_0\geq 0$,
and $v_0\in H^1(\R^2)$ be a divergence-free vector field with initial vorticity $\omega_0=\partial_1 v_{2,0}-\partial_2 v_{1,0}\in B^{3-\sigma}_{\frac{4}{4-\sigma},\infty}(\R^2)$.
Let $T>0$ be any given. Then the system \eqref{BoussEq} admits a unique solution $(\mu,\theta,v)$ on $[0,T]$ such that
\begin{equation}\label{mu-concl}
  \mu\in L^\infty([0,T]; \mathcal{M}_+(\R^2)),\quad \textrm{with}\quad \mathrm{supp}\,\mu\subset B_{R_0+C_T},
\end{equation}
and
\begin{equation}\label{the-concl}
  \theta\in L^\infty([0,T]; L^1\cap B^{2-\sigma}_{\frac{4}{4-\sigma},\infty}(\R^2)),\quad\textrm{with}\quad \theta\geq 0 \;\;\mathrm{on}\;\;[0,T]\times\R^2,
\end{equation}
and
\begin{equation}\label{v-concl}
  v\in L^\infty([0,T]; H^1(\R^2))\cap L^2([0,T]; H^2(\R^2))\cap L^\infty([0,T]; W^{1,\infty}(\R^2)),
\end{equation}
where $C_T>0$ is a constant depending on $T$ and the norms of initial data.
\end{theorem}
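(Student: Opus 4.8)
The plan is to establish Theorem \ref{thm:2Dgwp} in two movements: uniform a priori estimates in the scale $L^\infty_T B^s_{p,\infty}$ with $p=\frac{4}{4-\sigma}$, from which existence follows by regularizing the data and passing to the limit, and a Lagrangian reformulation that delivers uniqueness. The endpoint summability $q=\infty$ is forced on us by the datum $\mu_0$: a generic Radon measure belongs to $B^{-\sigma/2}_{p,\infty}$ but to no $B^{-\sigma/2}_{p,q}$ with $q<\infty$, since the Bernstein bound $\|\Delta_j\mu\|_{L^p}\lesssim 2^{2j(1-1/p)}\|\mu\|_{\mathcal M}$ (via $\mathcal M\hookrightarrow L^1$) is only an $\ell^\infty$-in-$j$ statement. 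The argument closes because \eqref{BoussEq} is quasi-triangular in regularity: $\mu$ is merely transported, it forces $\theta$, the gradient $\partial_1\theta$ forces the vorticity, the vorticity reconstructs $v$ through Biot--Savart, and $v$ advects $\mu$ and $\theta$; along each arrow regularity is conserved or gained, the one genuine constraint being that each field can be no smoother than its own initial datum.

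For the a priori bounds, suppose first that $v$ generates a Lipschitz flow $X(t,\cdot)$ with $\|v\|_{L^\infty_TL^\infty_x}\le M$. Then the transport equation gives $\mu(t)=X(t)_{\#}\mu_0$, so mass is conserved, $\|\mu(t)\|_{\mathcal M}=\|\mu_0\|_{\mathcal M}$, and $\mathrm{supp}\,\mu(t)\subset B_{R_0+Mt}$, which is \eqref{mu-concl}. For $\theta$ the maximum principle gives $\theta\ge0$, and integrating the temperature equation (using $\mathrm{div}\,v=0$) gives the linear growth $\|\theta(t)\|_{L^1}=\|\theta_0\|_{L^1}+t\|\mu_0\|_{\mathcal M}$. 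The crucial bound is the Besov one. Writing Duhamel's formula $\theta(t)=e^{t\Delta}\theta_0+\int_0^t e^{(t-s)\Delta}\big(\mu-v\cdot\nabla\theta\big)(s)\,\dd s$ and localizing in frequency, the homogeneous part remains in $L^\infty_T B^{2-\sigma}_{p,\infty}$ because $e^{t\Delta}$ is uniformly bounded there, so the claimed regularity is exactly that of $\theta_0$ propagated. The measure source is in fact better: combining the Bernstein bound above with the heat decay $\|e^{\tau\Delta}\Delta_j\cdot\|_{L^p}\lesssim e^{-c\tau2^{2j}}\|\Delta_j\cdot\|_{L^p}$, the Duhamel integral of $\mu$ gains two derivatives and lands in $L^\infty_T B^{2-\sigma/2}_{p,\infty}$, which is strictly better since $2/p=2-\sigma/2$. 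The transport term is absorbed by the standard commutator estimate, which after the same heat smoothing costs nothing but a factor $\|\nabla v\|_{L^\infty}$.

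The velocity is controlled through the $2$D vorticity $\omega=\partial_1 v_2-\partial_2 v_1$, which solves $\partial_t\omega+v\cdot\nabla\omega-\Delta\omega=\partial_1\theta$. Since $\partial_1\theta\in L^\infty_T B^{1-\sigma}_{p,\infty}$ by the previous step, the identical parabolic estimate propagates $\omega\in L^\infty_T B^{3-\sigma}_{p,\infty}$, the regularity of $\omega_0$. The Biot--Savart law $v=\nabla^\perp\Delta^{-1}\omega$ then recovers $v$: the embedding $B^{3-\sigma}_{p,\infty}\hookrightarrow B^0_{\infty,1}$, valid because $3-\sigma-2/p=1-\sigma/2>0$ for $\sigma\in]0,2[$, yields $\nabla v\in L^\infty$, which is exactly the quantity that feeds the commutators above and makes the flow Lipschitz, while the standard $L^2$ energy estimate with forcing $\theta e_2$ (whose $L^{4/3}$ norm is controlled by interpolating the $L^1$ and $B^{2-\sigma}_{p,\infty}\hookrightarrow L^{4/\sigma}$ bounds on $\theta$) supplies \eqref{v-concl}. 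The bound on $\|\nabla v\|_{L^\infty}$ then closes the system through a Gr\"onwall argument; as only finiteness on each $[0,T]$ is needed, the at most exponential growth in $T$ is harmless and the estimates are global. Existence follows by mollifying $\mu_0^\varepsilon=\mu_0*\rho_\varepsilon$, solving the resulting smooth Boussinesq system by classical theory, and passing to the limit: the uniform bounds give weak-$*$ compactness of $\mu^\varepsilon$ in $L^\infty_T\mathcal M_+$ and, by Aubin--Lions, strong compactness of $\theta^\varepsilon,v^\varepsilon$; since the supports stay in a fixed ball and $v^\varepsilon$ converges strongly, one passes to the limit in the weak formulation of $v\cdot\nabla\mu$ and in the nonlinearities $v\cdot\nabla\theta$, $v\cdot\nabla v$.

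For uniqueness I would avoid subtracting two measures, the obstruction flagged in the abstract, by recasting the problem in Lagrangian coordinates. Given two solutions with flows $X_1,X_2$ one has $\mu_i(t)=(X_i(t,\cdot))_{\#}\mu_0$, so in Lagrangian variables the transported measure is, for both solutions, the single fixed object $\mu_0$; the rough datum thereby drops out of the comparison, which is carried entirely by the functions $Y:=X_1-X_2$, $v_1-v_2$ and $\theta_1-\theta_2$. The measure now enters only through terms of the form $\int(\phi\circ X_1-\phi\circ X_2)\,\dd\mu_0$, bounded by $\|\nabla\phi\|_{L^\infty}\|Y\|_{L^\infty}\|\mu_0\|_{\mathcal M}$, i.e. Lipschitzly in the flow and with no difference of measures. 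The main obstacle of the whole proof lies precisely here, intertwined with the endpoint temperature estimate: one must run the product and commutator estimates in the non-reflexive, non-separable scale $B^s_{p,\infty}$, where the usual maximal parabolic regularity and density arguments are unavailable, and show that $Y$, $v_1-v_2$, $\theta_1-\theta_2$ obey a closed Gr\"onwall inequality even though $\theta$ carries only regularity $2-\sigma$ and $\mu$ is a genuine measure transported along two distinct, nearly coincident flows; controlling the compositions $\theta_i\circ X_i$ and the source $\mu_0$ along $X_1$ versus $X_2$ in this low-regularity setting is the delicate quantitative heart of the argument.
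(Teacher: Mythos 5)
The decisive gap is in your global closure of the \textit{a priori} estimates. Your scheme bounds the convection term in the temperature equation by ``a commutator estimate costing a factor $\|\nabla v\|_{L^\infty}$'', bounds $\|\nabla v\|_{L^\infty}$ by $\|v\|_{L^2}+\|\omega\|_{B^{3-\sigma}_{\frac{4}{4-\sigma},\infty}}$, and drives the vorticity norm by the temperature norm. Tracing this loop, if $X(t)$ denotes the sum of the high norms of $\theta$ and $\omega$, you obtain an inequality of Riccati/exponential type, schematically $X(t)\leq C_0\exp\big(C\int_0^t X(s)\,\dd s\big)$, whose self-consistent bound blows up in \emph{finite} time; the assertion that the growth is ``at most exponential in $T$, hence harmless'' does not follow, and the argument as written only yields local-in-time existence, not the arbitrary-$T$ statement of the theorem. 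This circularity is precisely the difficulty the paper is built to avoid: the product estimate of Lemma \ref{lem:prodEs}(1) controls $\|v\cdot\nabla\theta\|_{B^{-\sigma}_{\frac{4}{4-\sigma},\infty}}$ using only the \emph{energy} norms $\|v\|_{L^2}+\|\nabla v\|_{L^2}$ (never the Lipschitz norm), at the price of a $\sqrt{k+2}$ weight on the frequency blocks of $\theta$; Lemma \ref{lem:intp} converts that weight into a logarithm of a ratio of norms; and the exponents balance exactly so that the forcing $\|\theta(t)\|_{L^2}^2$ in the vorticity energy estimate is bounded by $(1+t)^7\big(1+y\log(e+y)\big)$ with $y=\|(v,\omega)\|_{L^\infty_t(L^2)}^2$ (see \eqref{theEsBesov2} and \eqref{eq:vEs-key0}). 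A log-linear Gr\"onwall inequality is globally solvable (double exponential in $T$), and only after this $L^2$-level closure does the paper recover $\omega\in B^{3-\sigma}_{\frac{4}{4-\sigma},\infty}$ and $\nabla v\in L^\infty$ by an absorption argument with coefficients that are already finite. Without this two-tier structure (energy first, high norms second), or a substitute for it, your estimates do not close for large $T$. (Your observation that $\mu\in B^{-\sigma/2}_{\frac{4}{4-\sigma},\infty}$, sharper than the $B^{-\sigma}$ bound used in the paper, is correct but irrelevant to this obstruction: the problem is the transport term, not the measure term.)

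On uniqueness you correctly choose the Lagrangian reformulation, which is also the paper's route, but two points are off. First, in the Lagrangian difference system \eqref{L-BEq-del} the measure does not appear even ``Lipschitzly in the flow'' as you suggest: both temperatures are forced by the same fixed $\mu_0$, so it cancels identically and no term $\int(\phi\circ X_1-\phi\circ X_2)\,\dd\mu_0$ ever needs to be estimated. Second, the part you defer as ``the delicate quantitative heart'' is exactly what must be supplied, and your framing of it (product/commutator estimates in the scale $B^s_{p,\infty}$) is not how it can be closed: because $\delta\bar\theta$ has so little regularity, the paper pairs a $\dot H^{-1}$-energy estimate for $\delta\bar\theta$ with an $\dot H^1$/maximal-regularity estimate for $\delta\bar v$ via the Stokes Lemma \ref{lem:Stokes}, and then absorbs every right-hand term on a small interval $[0,T']$ chosen through \eqref{Tasum1}--\eqref{Tasum2}, iterating in time. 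As it stands, the proposal asserts rather than proves both of the two points on which the theorem actually turns.
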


The above statement requires some explanation. In general the global wellposedness of system (\ref{BoussEq}) with smooth forcing $\mu$ in the two spacial dimension case for large data is clear. Thanks to the famous result of
Ladyzhenskaya \cite{Lad} (and in the language of Besov space \cite{DM-ADV}), we are able to obtain the regular solutions to the Navier-Stokes equations. The system (\ref{BoussEq}) from the
regularity viewpoint is a relatively weak perturbation and basic energy norms grant us the standard existence at  the level of Galerkin's method,
but only for a suitable approximation related with smoothed out initial data -- see details in the subsection \ref{subsec:exe}. However this approach
works just for smooth forcing $\mu$, and for the original initial data we are required to proceed in a non-standard way. To avoid technical problem with definition of measures at infinity we assume that the initial heat source is
compactly supported in space.
\vskip0.1cm

Our result has three interesting ingredients:

\smallskip

$\ast$ \ The first one, the heat source is just a measure which is not vanishing in time, and the only information one can get here is the $L^\infty$
in time and measure in space (see Proposition \ref{prop:ape1}). It requires quite high regularity of the velocity, indeed the Lipschitz continuity, to guarantee the existence and uniqueness. On the other hand we can not expect too much regular solutions since they are generated by a measure forcing. Nevertheless, our solutions are regular and $(\ref{BoussEq})_1$ solves $\mu$ in terms of characteristics, which are well-defined.

\smallskip

$\ast$ \ The second one, it is an application of non-standard Besov spaces $L^\infty(0,T; B^s_{p,\infty}(\R^2))$ of first time to consider the measure force.
The basic restriction here is that the measure $\mu$ will belong to $L^\infty_T(B^{-\sigma}_{\frac{4}{4-\sigma},\infty})$, which is slightly larger than the expected one.
Such a framework fits perfectly to the regularity properties of the right-hand side of equation $(\ref{BoussEq})_2$.

\smallskip

$\ast$ \ The last one is the limited regularity of solution. In the construction of the \textit{a priori} estimates, it appears that the force $\mu$ given as a measure does not allow to use just standard bounds by
the energy norms. In showing the crucial $L^2$-estimate of velocity $v$ and vorticity $\omega$, we have to control $\|\theta(t)\|_{L^2}$ which is not so direct due to the effect of the measure force; we have a natural uniform $L^1$-bound $\|\theta\|_{L^\infty_t(L^1)}$,
and for the higher regularity we use Lemma \ref{lem:heatEq} to derive an estimate of quantity $\|\theta\|_{L^\infty_t (B^{2-\sigma}_{\frac{4}{4-\sigma},\infty})}$ in terms of $\|(v,\omega)\|_{L^\infty_t (L^2)}$, and then the needing estimate $\|\theta(t)\|_{L^2}$ is bounded from interpolation of these two quantities. Somehow we consider here a limit case and the final estimate of $\|(v,\omega)\|_{L^\infty_t (L^2)}$ (see \eqref{eq:vEs-key0} below) is obtained by an application of a new logarithmic interpolation inequality for the Besov spaces (see Lemma \ref{lem:intp}).
\vskip0.1cm

Besides, we also point out that since the measure force $\mu$ is determined via a transport equation, it seems not sufficient to show the uniqueness in the framework of Eulerian coordinates and we have to adapt the Lagrangian coordinates
(see \cite{DanM12,DanM13} and references therein for this novel method used in the density-dependent incompressible Navier-Stokes equations).
The limited regularity of velocity and temperature field also makes much difficulty in considering the difference system \eqref{L-BEq-del} by using the standard $L^2$-energy estimates,
but instead we work on a non-standard setting, that is, we consider the $\dot H^{-1}$-estimate of $\delta \bar{\theta}$ and $\dot H^1$-estimate of $\delta \bar{v}$ simultaneously,
and by a series of energy type estimates in the Lagrangian coordinates we manage to show the uniqueness.
\vskip0.1cm

The outline of this paper is as follows. We present preliminary results including some auxiliary lemmas in subsection \ref{sec:prelim}. We give the detailed proof of Theorem \ref{thm:2Dgwp} in the whole section \ref{sec:thm-2dgwp}:
we firstly show the key \textit{a priori} estimates of solution $(\mu,\theta,v)$ in subsection \ref{subsec:apE}, then we sketch the proof of existence in subsection
\ref{subsec:exe}, and finally we prove the uniqueness by using Lagrangian coordinates in subsection \ref{subsec:uniq}. In the last appendix section we show the proof of Lemma \ref{lem:prodEs}. 

\section{Preliminaries}\label{sec:prelim}

In this section, some notations are listed, and we compile basic results related to measure and Lagrangian coordinates, and also show some auxiliary lemmas used in the paper.

The following notations are used throughout this paper.

\noindent $\diamond$ $C$ stands for a constant which may be different from line to line, and $C(\lambda_1,\cdots,\lambda_n)$ denotes a constant $C$ depending on the coefficients $\lambda_1,\cdots,\lambda_n$.

\noindent $\diamond$ The notation $\mathcal{D}(\R^d )$ or $\mathcal{D}(\R^d\times[0,T])$ denotes the space of $C^\infty$-smooth functions with compact support on $\R^d$ or $\R^d\times [0,T] $, respectively.
$\mathcal{D}'(\R^d\times[0,T])$ is the space of distribution on $\R^d\times [0,T] $ which is the dual space of $\mathcal{D}(\R^d\times[0,T])$.

\noindent $\diamond$ The notation $\mathcal{S}(\mathbb{R}^d )$ is the Schwartz class of rapidly decreasing $C^\infty$-smooth functions, and $\mathcal{S}'(\mathbb{R}^d )$ is the space of tempered distributions which is the dual space of $\mathcal{S}(\R^d)$.

\noindent $\diamond$ For $m\in\N$, $r\in [1,+\infty]$, $s\in \R$, we denote by $W^{m,r}(\R^d )$ ($\dot W^{m,r}(\R^d)$) and $H^s(\R^d )$ ($\dot H^s(\R^d )$)
the usual $L^r$-based and $L^2$-based inhomogeneous (homogenous) Sobolev spaces.

\noindent $\diamond$ For Banach space $X=X(\R^d)$ and $\rho\in [1,\infty]$, the notation $L^\rho(0,T; X)$ denotes the usual space-time space $L^\rho([0,T]; X)$, which is also abbreviated as $L^\rho_T(X)$.

\noindent $\diamond$ We use $B_r(x_0):=\{x\in \mathbb{R}^d: |x-x_0|< r\}$ to denote the open ball of $\mathbb{R}^d$.


\subsection{Results related to measure}

We denote $\mathcal{M}=\mathcal{M}(\R^d)$ as the space of finite Radon measures defined on $\R^d$ with \emph{total variation} topology,
i.e., for any $\mu$ Radon measure, define
\begin{equation*}
  \|\mu\|_{\mathcal{M}(\R^d)}=|\mu|(\R^d):=\sup\left\{\left|\int_{\R^d} f \dd \mu \right|: \|f\|_{L^\infty}\leq 1, f\in C_0(\R^d) \right\}.
\end{equation*}
As a consequence of Riesz representation theorem, $\mathcal{M}(\R^d)$ is the dual space of $C_0(\R^d)$.
For $\{\mu_n\}_{n\in\N}\subset \MM(\R^d)$ and $\mu\in \MM(\R^d)$, we say $\mu_n\rightarrow \mu$ weakly$*$ if
\begin{equation*}
  \lim_{n\rightarrow \infty} \int_{\R^d} f \dd \mu_n = \int_{\R^d} f \dd \mu, \quad \textrm{for every  }f\in C_0(\R^d).
\end{equation*}

We also denote $\mathcal{M}=\mathcal{M}(\R^d,d)$ as the space of finite Radon measures on $\R^d$ equipped with \emph{bounded Lipschitz distance} topology,
i.e., for any Radon measures $\mu$ and $\nu$, define
\begin{equation}\label{d-topology}
  d(\mu,\nu):= \sup\left\{\left|\int_{\R^d} f \dd \mu - \int_{\R^d} f \dd \nu \right|: \|f\|_{L^\infty}\leq 1\; \textrm{and}\;
  Lip(f):= \sup_{x\neq y\in\R^d} \frac{|f(x)-f(y)|}{|x-y|} \leq 1\right\} .
\end{equation}

We denote $\mathcal{M}_+= \mathcal{M}_+(\R^d)$ the set of nonnegative finite Radon measures on $\R^d$, i.e.,
\begin{equation*}
  \mathcal{M}_+ := \left\{\mu\in \mathcal{M}(\R^d): \mu\geq 0\right\},
\end{equation*}
both with the strong total variation and  weak $d(\cdot,\cdot)$ topologies.

\begin{definition}
  We say that a sequence $\{\mu_n\}\subset\mathcal{M}(\R^d)$ is \emph{tight} if for any $\epsilon>0$,
there exists a compact set $K_\epsilon \subset\subset\R^d$ so that
\begin{equation*}
  \sup_{n\in \N} |\mu_n|(\R^d\setminus K_\epsilon) <\epsilon,
\end{equation*}
where $|\mu_n|$ is the total variation measure of $\mu_n$.
\end{definition}

\begin{proposition}[cf. Theorem 2.7 of \cite{GLM}]
  Let $\{\mu_n\}_{n\in\N}$ be a tight sequence in $\MM(\R^d)$ and let $\mu\in \MM(\R^d)$.
Then as $n\rightarrow \infty$, $\mu_n\rightarrow \mu$ weakly$*$ if and only if $d(\mu_n,\mu)\rightarrow 0$ and $\sup_{n\in\N} |\mu_n|(\R^d)<\infty$.
\end{proposition}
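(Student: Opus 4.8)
The plan is to prove the two implications separately. Throughout, the point to keep in mind is that the test functions in the bounded Lipschitz distance \eqref{d-topology} are bounded and Lipschitz but need \emph{not} vanish at infinity, whereas weak$*$ convergence only tests against $C_0(\R^d)$; bridging these two classes is exactly where tightness and the uniform mass bound enter.

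For the implication ``weak$*$ $\Rightarrow$ $d$-convergence together with the uniform bound,'' the bound $\sup_n|\mu_n|(\R^d)<\infty$ is immediate from Banach--Steinhaus: each $\mu_n$ is a bounded functional on the Banach space $C_0(\R^d)$, the sequence $\int f\,\dd\mu_n$ converges (hence is bounded) for every fixed $f\in C_0(\R^d)$, and uniform boundedness upgrades this to a uniform operator-norm bound, which is precisely $\sup_n\|\mu_n\|_{\MM}=\sup_n|\mu_n|(\R^d)<\infty$. To get $d(\mu_n,\mu)\to0$, I would fix $\epsilon>0$ and a smooth cutoff $\chi_R$ equal to $1$ on $B_R(0)$, supported in $B_{2R}(0)$, with $|\nabla\chi_R|\leq C/R$. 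Splitting an admissible $f$ (with $\|f\|_{L^\infty}\leq1$, $Lip(f)\leq1$) as $f=f\chi_R+f(1-\chi_R)$, the tail term is bounded by $|\mu_n|(\R^d\setminus B_R)$, so by tightness of $\{\mu_n\}$ and by $|\mu|(\R^d\setminus B_R)\to0$ (automatic, $\mu$ being finite) both tails are $\leq\epsilon/2$ uniformly in $n$ once $R$ is large. The family $\{f\chi_R\}$ is uniformly bounded, equi-Lipschitz (constant $\leq1+C/R$) and supported in the fixed ball $\overline{B_{2R}}$, hence precompact in $C(\overline{B_{2R}})$ by Arzel\`a--Ascoli; since $\{\mu_n-\mu\}$ is equibounded in $\MM$ and $\int g\,\dd(\mu_n-\mu)\to0$ for each fixed $g\in C_0(\R^d)$, a routine covering argument shows this convergence is \emph{uniform} over the precompact family $g\in\{f\chi_R\}$. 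Taking the supremum over $f$ yields $d(\mu_n,\mu)\leq\epsilon$ for $n$ large.

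For the converse, I would assume $d(\mu_n,\mu)\to0$ and $M:=\sup_n|\mu_n|(\R^d)<\infty$ and fix $f\in C_0(\R^d)$. Since $\mathcal{D}(\R^d)$ is dense in $C_0(\R^d)$ for the sup norm, choose $g\in\mathcal{D}(\R^d)$ with $\|f-g\|_{L^\infty}<\epsilon$. Setting $a:=\max\{\|g\|_{L^\infty},Lip(g)\}$, the rescaled $g/a$ is admissible in \eqref{d-topology}, so $|\int g\,\dd(\mu_n-\mu)|\leq a\,d(\mu_n,\mu)\to0$; combined with $|\int(f-g)\,\dd\mu_n|\leq\epsilon M$ and the analogous bound for $\mu$, this gives $\limsup_n|\int f\,\dd(\mu_n-\mu)|\leq\epsilon(M+\|\mu\|_{\MM})$, and sending $\epsilon\to0$ establishes weak$*$ convergence. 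Note that tightness plays no role in this direction.

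The main obstacle is the forward direction, and specifically the fact that $d(\mu_n,\mu)$ is a \emph{supremum} over an infinite family of test functions which, in addition, need not belong to $C_0(\R^d)$. Both difficulties are dispatched at once by the cutoff: tightness (together with finiteness of $\mu$) confines all the relevant mass to a fixed ball, and Arzel\`a--Ascoli converts the equi-Lipschitz cutoff family into a precompact set, on which pointwise convergence of the equibounded functionals $\mu_n-\mu$ is automatically uniform.
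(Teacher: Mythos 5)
Your proof is correct, but there is nothing in the paper to compare it against: the proposition is quoted verbatim as Theorem 2.7 of \cite{GLM} and the paper supplies no proof of its own. Taken on its own merits, your argument is complete and is the standard functional-analytic route. The forward direction is the only delicate one, and you handle both of its difficulties correctly: Banach--Steinhaus on the Banach space $C_0(\R^d)$ (legitimate, since weak$*$ convergence gives pointwise boundedness of the functionals $f\mapsto\int f\,\dd\mu_n$) yields $\sup_n|\mu_n|(\R^d)<\infty$; the cutoff $\chi_R$ splits each admissible test function in \eqref{d-topology} into a tail part, controlled uniformly in $n$ by tightness of $\{\mu_n\}$ and by finiteness of $\mu$, and a compactly supported part, over which the family $\{f\chi_R\}$ is equibounded and equi-Lipschitz, hence totally bounded in sup norm by Arzel\`a--Ascoli, so that the three-epsilon covering argument upgrades the pointwise convergence of the uniformly norm-bounded functionals $\mu_n-\mu$ to convergence uniform over that family. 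The converse via density of $\mathcal{D}(\R^d)$ in $C_0(\R^d)$ and rescaling $g/a$ with $a=\max\{\|g\|_{L^\infty},Lip(g)\}$ is also correct, and your observation that tightness is not needed in that direction is accurate. In short, your write-up fills in, in a self-contained way, exactly what the paper outsources to \cite{GLM}.
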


The space $(\mathcal{M}_+,d)$ is a complete metric space.

\begin{proposition}[cf. Corollary 21 of \cite{MP18}]\label{pro:mu-conv}
  Let $\{\mu_n\}_{n\in\N}$ be a sequence bounded in $\mathcal{M}_+(\R^d)$ with support contained in a given ball.
Then there exists a $(\mathcal{M}_+,d)$-convergent subsequence $\{\mu_{n_k}\}$.
\end{proposition}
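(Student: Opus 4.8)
The plan is to combine the sequential weak-$*$ compactness of bounded sets in $\mathcal{M}(\R^d)$ with the equivalence between weak-$*$ convergence and $d$-convergence for tight sequences recorded in the preceding proposition (Theorem 2.7 of \cite{GLM}). Completeness of $(\mathcal{M}_+,d)$, noted just above, is not actually needed for this argument, since the weak-$*$ route produces the limit point directly.

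First I would extract a weak-$*$ limit. Set $M := \sup_{n} \mu_n(\R^d)$, which is finite by hypothesis since the $\mu_n$ are nonnegative and bounded in $\mathcal{M}_+(\R^d)$. Because $C_0(\R^d)$ is a separable Banach space and $\mathcal{M}(\R^d)=(C_0(\R^d))^*$ by the Riesz representation theorem, the closed ball $\{\nu\in\mathcal{M}(\R^d): \|\nu\|_{\mathcal{M}(\R^d)}\le M\}$ is weak-$*$ sequentially compact (sequential Banach--Alaoglu). Hence there exist a subsequence $\{\mu_{n_k}\}$ and some $\mu\in\mathcal{M}(\R^d)$ with $\mu_{n_k}\to\mu$ weakly-$*$, i.e. $\int_{\R^d} f\,\dd\mu_{n_k}\to\int_{\R^d} f\,\dd\mu$ for every $f\in C_0(\R^d)$.

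Next I would verify that the limit lies in the correct class. Testing against nonnegative $f\in C_0(\R^d)$ and passing to the limit in $\int_{\R^d} f\,\dd\mu_{n_k}\ge 0$ shows $\mu\in\mathcal{M}_+(\R^d)$; testing against $f\in C_0(\R^d)$ supported in $\R^d\setminus\overline{B_R(0)}$, where $B_R(0)$ denotes the common ball, gives $\int_{\R^d} f\,\dd\mu=0$, so that $\mathrm{supp}\,\mu\subset\overline{B_R(0)}$. In particular $\mu\in(\mathcal{M}_+,d)$, so it is an admissible candidate limit.

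Finally I would invoke the equivalence theorem. The common-support hypothesis makes $\{\mu_{n_k}\}$ tight in the trivial way: choosing $K_\epsilon=\overline{B_R(0)}$ for every $\epsilon>0$ yields $\sup_k |\mu_{n_k}|(\R^d\setminus K_\epsilon)=\sup_k \mu_{n_k}(\R^d\setminus\overline{B_R(0)})=0<\epsilon$. Since moreover $\sup_k |\mu_{n_k}|(\R^d)=\sup_k \mu_{n_k}(\R^d)\le M<\infty$ and $\mu_{n_k}\to\mu$ weakly-$*$, the preceding proposition gives $d(\mu_{n_k},\mu)\to 0$, which is exactly $(\mathcal{M}_+,d)$-convergence of the subsequence. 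The only mildly delicate point is the sequential form of weak-$*$ compactness, which relies on separability of the predual $C_0(\R^d)$; everything else, including tightness and the uniform mass bound, is immediate from the hypotheses.
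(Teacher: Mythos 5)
Your proof is correct. One structural remark first: the paper does not actually prove this proposition --- it is imported verbatim as Corollary 21 of \cite{MP18}, so there is no in-paper argument to compare against. What you have done is supply a self-contained derivation from the two ingredients the paper states immediately beforehand: the Riesz duality $\mathcal{M}(\R^d)=(C_0(\R^d))^*$ and the tightness equivalence of Theorem 2.7 of \cite{GLM}. Each step checks out: the sequential Banach--Alaoglu extraction is legitimate because $C_0(\R^d)$ is separable (and boundedness in $\mathcal{M}_+$ is exactly a uniform total-variation bound, since $\|\mu_n\|_{\mathcal{M}}=\mu_n(\R^d)$ for nonnegative measures); positivity and support containment of the weak-$*$ limit follow by testing against nonnegative functions and against functions supported outside $\overline{B_R(0)}$, respectively, so the limit is an admissible element of $(\mathcal{M}_+,d)$; tightness is trivial from the common compact support; and you invoke only the forward implication of the \cite{GLM} equivalence (weak-$*$ convergence of a tight sequence implies $d$-convergence), which is precisely what that proposition provides. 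You are also right that completeness of $(\mathcal{M}_+,d)$ is not needed, since the limit object is produced directly by the weak-$*$ compactness rather than by a Cauchy argument. In short, your route is the natural one given the surrounding material, and it replaces an external citation with an elementary argument; the only genuinely nontrivial input remaining is the \cite{GLM} equivalence itself, which both you and the paper take as a black box.
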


\subsection{Auxiliary lemmas}


Before presenting some auxiliary lemmas used in this paper, we recall the definitions of nonhomogeneous Besov spaces and their space-time counterparts.
One can choose two nonnegative radial functions $\chi, \varphi\in \mathcal{D}(\mathbb{R}^d)$ be
supported respectively in the ball $\{\xi\in \mathbb{R}^d:|\xi|\leq \frac{4}{3} \}$ and the annulus $\{\xi\in
\mathbb{R}^d: \frac{3}{4}\leq |\xi|\leq  \frac{8}{3} \}$ such that (e.g. see \cite{Danc})
\begin{equation*}
  \chi(\xi)+\sum_{j\in \mathbb{N}}\varphi(2^{-j}\xi)=1, \quad \forall \xi\in \mathbb{R}^d.
\end{equation*}
For every $ f\in S'(\R^d)$, we define the non-homogeneous Littlewood-Paley operators as follows
\begin{equation}\label{LPop}
  \Delta_{-1}f:=\chi(D)f; \quad \, \quad\Delta_{j}f:=\varphi(2^{-j}D)f,\;\;\;S_j f:=\sum_{-1\leq k\leq j-1} \Delta_{k}f,\;\;\;\forall j\in \mathbb{N}.
\end{equation}
Now for $s\in \mathbb{R}, (p,r)\in[1,+\infty]^2$, the inhomogeneous Besov space $B_{p,r}^s=B^s_{p,r}(\R^d)$ is defined as
\begin{equation*}
  B^s_{p,r}:=\Big\{f\in\mathcal{S}'(\mathbb{R}^d);\|f\|_{B^s_{p,r}}:=\|\{2^{js}\|\Delta
  _j f\|_{L^p}\}_{j\geq -1}\|_{\ell^r }<\infty  \Big\}.
\end{equation*}
The classical space-time Besov space $L^{\rho}([0,T],B^{s}_{p,r})$, abbreviated by
$L^{\rho}_{T}(B^{s}_{p,r})$, is the set of tempered distribution $f$
such that
\begin{equation*}
  \|f\|_{L^{\rho}_{T}(B^{s}_{p,r})}:=\big\|\|\{2^{js}\|\Delta_{j}f\|_{L^{p}}\}_{j\geq -1}\|_{\ell^{r}}\big\|_{L^{\rho}([0,T])}<\infty.
\end{equation*}
Another one is the Chemin-Lerner's mixed space-time Besov space $\widetilde{L}^{\rho}([0,T],B^{s}_{p,r})$, abbreviated by $\widetilde{L}^{\rho}_T (B^s_{p,r})$, which is the set of tempered distribution $f$ satisfying
\begin{equation*}
  \|f\|_{\widetilde L^\rho_T (B^s_{p,r})}:= \|\{2^{qs}\|\Delta_q f\|_{L^\rho_T (L^p)}  \}_{q\geq -1}\|_{\ell^r}<\infty.
\end{equation*}

Then we have the following regularity estimates of the heat equation in the framework of Besov spaces (see \cite[Theorem 2.2.5]{Danc}).
\begin{lemma}\label{lem:heatEq}
  Let $s\in \R$ and $1\leq \rho,\rho_1,r\leq \infty$. Let $T>0$, $u_0\in B^s_{p,r}(\R^d)$, and $f\in \widetilde{L}^\rho_T(B^{s-2+2/\rho}_{p,r})$.
Then the following nonhomogeneous heat equation
\begin{equation*}
  \partial_t u -\Delta u = f,\qquad u|_{t=0}(x)=u_0(x),\quad x\in \R^d,
\end{equation*}
has a unique solution $u$ in $\widetilde{L}^\rho_T(B^{s+2/\rho}_{p,r})\cap \widetilde{L}^\infty_T(B^s_{p,r})$ and there exists a constant $C=C(d)$ such that for all $\rho_1\in [\rho,\infty]$,
\begin{equation*}
  \|u\|_{\widetilde{L}^{\rho_1}_T(B^{s+ \frac{2}{\rho_1}}_{p,r}(\R^d))} \leq C \left((1+T^{\frac{1}{\rho_1}})\|u_0\|_{B^s_{p,r}(\R^d)} + (1+ T^{1+ \frac{1}{\rho_1}-\frac{1}{\rho}})\|f\|_{\widetilde{L}^\rho_T(B^{s-2+ \frac{2}{\rho}}_{p,r}(\R^d))} \right).
\end{equation*}
In particular, for $\rho=\rho_1=r=\infty$, we have
\begin{equation}\label{eq:heat-ape}
  \|u\|_{L^\infty_T(B^s_{p,\infty}(\R^d))} \leq C \left(\|u_0\|_{B^s_{p,\infty}(\R^d)} + (1+ T) \|f\|_{L^\infty_T(B^{s-2}_{p,\infty}(\R^d))} \right).
\end{equation}
\end{lemma}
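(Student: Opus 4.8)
The plan is to reduce the problem to a family of decoupled scalar estimates by applying the Littlewood--Paley operators $\Delta_j$ to the equation. Since $\Delta_j$ commutes with $\partial_t$ and $\Delta$, the block $u_j:=\Delta_j u$ solves $\partial_t u_j-\Delta u_j=\Delta_j f$ with $u_j|_{t=0}=\Delta_j u_0$, whose unique solution is given by Duhamel's formula
\begin{equation*}
  u_j(t)=e^{t\Delta}\Delta_j u_0+\int_0^t e^{(t-\tau)\Delta}\Delta_j f(\tau)\,\dd\tau.
\end{equation*}
The engine of the whole argument is the frequency-localized smoothing estimate for the heat semigroup: there exist constants $c,C>0$, depending only on the support of $\varphi$, such that for every $j\geq 0$ and every $g$,
\begin{equation*}
  \|e^{t\Delta}\Delta_j g\|_{L^p}\leq C e^{-c2^{2j}t}\|\Delta_j g\|_{L^p},
\end{equation*}
while for the low-frequency block one only has the contraction bound $\|e^{t\Delta}\Delta_{-1}g\|_{L^p}\leq C\|\Delta_{-1}g\|_{L^p}$. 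I would prove the former by writing $e^{t\Delta}\Delta_j g=\mathcal{F}^{-1}\!\big(e^{-t|\xi|^2}\varphi(2^{-j}\xi)\big)\ast g$, rescaling $\xi\mapsto 2^j\xi$, and bounding the resulting kernel in $L^1$ uniformly; this produces the Gaussian factor $e^{-c2^{2j}t}$ precisely because $\varphi$ is supported in an annulus bounded away from the origin.

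Feeding these bounds into Duhamel and taking the $L^p$ norm in space gives, for $j\geq 0$,
\begin{equation*}
  \|u_j(t)\|_{L^p}\leq C e^{-c2^{2j}t}\|\Delta_j u_0\|_{L^p}+C\int_0^t e^{-c2^{2j}(t-\tau)}\|\Delta_j f(\tau)\|_{L^p}\,\dd\tau.
\end{equation*}
Next I would take the $L^{\rho_1}([0,T])$ norm in time. For the data term this uses $\|e^{-c2^{2j}\cdot}\|_{L^{\rho_1}(0,T)}\leq C\,2^{-2j/\rho_1}$; multiplying by the Besov weight $2^{j(s+2/\rho_1)}$ cancels the $2^{-2j/\rho_1}$ and leaves $2^{js}\|\Delta_j u_0\|_{L^p}$ times a uniform constant. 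For the forcing term I would apply Young's convolution inequality with the relation $\tfrac1m=1+\tfrac1{\rho_1}-\tfrac1\rho$ (here the hypothesis $\rho_1\geq\rho$ is exactly what guarantees $m\geq 1$), using $\|e^{-c2^{2j}\cdot}\|_{L^m(0,T)}\leq C\,2^{-2j/m}$; since $2+\tfrac{2}{\rho_1}-\tfrac2\rho=\tfrac2m$, the Besov weights again balance and the high-frequency blocks contribute a bound uniform in $T$.

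The low-frequency block $\Delta_{-1}$ carries no smoothing, so I would treat it by hand: the contraction bound followed by Hölder in time gives the data contribution $\lesssim T^{1/\rho_1}\|\Delta_{-1}u_0\|_{L^p}$ and, from $\int_0^t\|\Delta_{-1}f(\tau)\|_{L^p}\,\dd\tau\leq t^{1-1/\rho}\|\Delta_{-1}f\|_{L^\rho_t(L^p)}$, the forcing contribution $\lesssim T^{1+1/\rho_1-1/\rho}\|\Delta_{-1}f\|_{L^\rho_T(L^p)}$. These two low-frequency terms are precisely the source of the $T$-dependent factors $(1+T^{1/\rho_1})$ and $(1+T^{1+1/\rho_1-1/\rho})$ in the statement, the constant part coming from the high frequencies. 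Collecting the weighted block bounds and taking the $\ell^r$ norm over $j\geq -1$ yields the claimed inequality; specializing to $\rho=\rho_1=r=\infty$ gives \eqref{eq:heat-ape}. Uniqueness is immediate from linearity, since each $u_j$ is determined by the explicit Duhamel formula, so two solutions with the same data share all dyadic blocks and hence coincide. The main obstacle is the careful bookkeeping of the low- versus high-frequency split needed to reproduce the exact powers of $T$; the semigroup kernel estimate itself, though central, is routine once the annular support of $\varphi$ is exploited.
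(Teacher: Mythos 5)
Your proposal is correct: the paper does not prove this lemma at all, but simply quotes it from the cited reference (Danchin's notes, Theorem 2.2.5), and your argument --- frequency localization, Duhamel's formula, the annulus-induced smoothing bound $\|e^{t\Delta}\Delta_j g\|_{L^p}\leq Ce^{-c2^{2j}t}\|\Delta_j g\|_{L^p}$ for $j\geq 0$, Young's inequality in time with $\tfrac1m=1+\tfrac1{\rho_1}-\tfrac1\rho$, and the separate low-frequency treatment producing the powers of $T$ --- is precisely the standard proof given there. The bookkeeping of the Besov weights and of the $T$-factors is accurate, so nothing needs to be changed.
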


In obtaining the \textit{a priori} estimates of the main theorem, we use the following product estimates in Besov spaces (whose proof is put to the appendix section).
\begin{lemma}\label{lem:prodEs}
Let $v:\R^2\rightarrow\R^2$ be a divergence-free vector field and $\theta:\R^2\rightarrow \R$ be a scalar function.
\begin{enumerate}[(1)]
\item
Let $s\in ]0,1[$, $ p\in [1,\infty]$. Then there exists a positive constant $C=C(s)$ such that
\begin{equation}\label{eq:prodEs}
  \|v\cdot\nabla \theta\|_{B^{-s}_{p,\infty}(\R^2)} \leq C \left(\|v\|_{L^2(\R^2)} + \|\nabla v\|_{L^2(\R^2)}\right) \Big(\sup_{k\geq -1} 2^{k(1-s)} \sqrt{k+2}\|\Delta_k\theta\|_{L^p(\R^2)}\Big).
\end{equation}
\item
Let $s\in ]0,1[$, $p\in [1,2]$.  Then there exists an absolute positive constant $C_0$ such that
\begin{equation}\label{eq:prodEs2}
\begin{split}
  \|v\cdot\nabla \theta\|_{B^s_{p,\infty}(\R^2)} & \leq C_0 \left(\|v\|_{L^{2p}(\R^2)} \|\theta\|_{B^{1+s}_{2p,\infty}(\R^2)} + \|v\|_{B^s_{2p,\infty}(\R^2)} \|\nabla\theta\|_{L^{2p}(\R^2)} \right) \\
  & \leq C \|v\|_{B^s_{2p,\infty}(\R^2)} \|\theta\|_{B^{1+s}_{2p,\infty}(\R^2)}.
\end{split}
\end{equation}
\item
Let $s\in ]0,\infty[$, $p\in [1,\infty]$. Then there exists a positive constant $C=C(s)$ such that
\begin{equation}\label{eq:prodEs3}
  \|v\cdot\nabla v\|_{B^s_{p,\infty}(\R^2)} \leq C\big( \|\nabla v\|_{B^s_{p,\infty}(\R^2)} \|v\|_{L^\infty(\R^2)} + \|v\|_{B^s_{p,\infty}(\R^2)} \|\nabla v\|_{L^\infty(\R^2)}\big).
\end{equation}
\end{enumerate}
\end{lemma}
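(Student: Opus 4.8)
The plan is to reduce all three inequalities to Bony's paraproduct decomposition, writing $fg = T_f g + T_g f + R(f,g)$ with $T_f g = \sum_j S_{j-1}f\,\Delta_j g$ and $R(f,g)=\sum_{|j-j'|\le 1}\Delta_j f\,\Delta_{j'}g$, while systematically exploiting $\div v = 0$ to recast $v\cdot\nabla\theta = \div(\theta v)$ and $v\cdot\nabla v = \div(v\otimes v)$. The divergence form serves two purposes: it trades the gradient for a single shift of the Besov index, and it lets me rewrite the troublesome remainder as $\sum_i R(v_i,\partial_i\theta) = \div R(v,\theta)$ (since $\div\Delta_k v = 0$), which is exactly what makes the low- and negative-regularity remainder pieces summable. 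Throughout I will use Bernstein's inequalities to pass between $L^p$ norms of frequency-localized pieces.

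For part (1) I would estimate $v\cdot\nabla\theta$ block by block, measuring the output in $B^{-s}_{p,\infty}$. The paraproduct $T_{v}\cdot\nabla\theta = \sum_k S_{k-1}v\cdot\Delta_k\nabla\theta$ is the decisive term: I bound $\|S_{j-1}v\|_{L^\infty}$, and since in two dimensions Bernstein gives $\|\Delta_k v\|_{L^\infty}\le C 2^k\|\Delta_k v\|_{L^2}$ with $\{2^k\|\Delta_k v\|_{L^2}\}_k\in\ell^2$ of norm $\lesssim \|v\|_{L^2}+\|\nabla v\|_{L^2}$, Cauchy--Schwarz over these frequency blocks yields $\|S_{j-1}v\|_{L^\infty}\le C\sqrt{j+2}\,(\|v\|_{L^2}+\|\nabla v\|_{L^2})$. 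This logarithmic loss --- the $2$D endpoint failure of $H^1\hookrightarrow L^\infty$ --- is precisely absorbed by the weight $\sqrt{k+2}$ built into the right-hand side of \eqref{eq:prodEs}, leaving $\sup_j\sqrt{j+2}\,2^{j(1-s)}\|\Delta_j\theta\|_{L^p}$. The remainder $\div R(v,\theta)$ is controlled by Bernstein plus a geometric sum over $k\ge j-c$, convergent because $1-s>0$, combined with Cauchy--Schwarz; and the last paraproduct $T_{\nabla\theta}\cdot v$ is treated by placing the low-frequency factor $S_{j-1}\nabla\theta$ in $L^p$ and the high-frequency $\Delta_j v$ in $L^\infty$ via Bernstein. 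All three pieces are bounded by $C(\|v\|_{L^2}+\|\nabla v\|_{L^2})\sup_k\sqrt{k+2}\,2^{k(1-s)}\|\Delta_k\theta\|_{L^p}$, uniformly for $p\in[1,\infty]$.

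For part (2) the same three pieces are estimated with the Hölder splitting $\tfrac1p = \tfrac1{2p}+\tfrac1{2p}$, so every product is taken in $L^{2p}\cdot L^{2p}$. The two paraproducts produce exactly $\|v\|_{L^{2p}}\|\theta\|_{B^{1+s}_{2p,\infty}}$ and $\|v\|_{B^s_{2p,\infty}}\|\nabla\theta\|_{L^{2p}}$, while the remainder $\div R(v,\theta)$ sums geometrically (using $\|\Delta_k v\|_{L^{2p}}\le C\|v\|_{L^{2p}}$ and the decay $2^{-k(1+s)}$) into the first of these; this gives the first inequality. The second inequality then follows from the embedding $B^s_{2p,\infty}\hookrightarrow L^{2p}$, valid for $s>0$, together with $\|\nabla\theta\|_{L^{2p}}\le C\|\theta\|_{B^{1+s}_{2p,\infty}}$. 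Part (3) is the classical Moser-type product law: for $s>0$ one has $\|fg\|_{B^s_{p,\infty}}\le C(\|f\|_{L^\infty}\|g\|_{B^s_{p,\infty}}+\|g\|_{L^\infty}\|f\|_{B^s_{p,\infty}})$, and applying this with $f=v_i$ and $g=\partial_i v$, summed over $i$, gives \eqref{eq:prodEs3} at once; equivalently one repeats the paraproduct argument, the remainder being summable precisely because $s>0$.

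The main obstacle is part (1): the negative target index $-s$ forces one to use the divergence structure to recover a full derivative, and, more delicately, the $2$D endpoint loss of $H^1\hookrightarrow L^\infty$ must be tracked with the correct half-power of $j$, so that it is matched by the weight $\sqrt{k+2}$ in the statement rather than degenerating into a full power of $j$. Getting this bookkeeping right is the crux; parts (2) and (3) are then routine paraproduct computations.
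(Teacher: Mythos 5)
Your proposal is correct and follows essentially the same route as the paper's appendix proof: Bony's decomposition with the remainder handled via the divergence-free structure of $v$ and a geometric sum (using $1-s>0$), the key $\sqrt{k+2}$ factor extracted from the two-dimensional Bernstein estimate $\|\Delta_l v\|_{L^\infty}\lesssim \|\Delta_l \nabla v\|_{L^2}$ plus Cauchy--Schwarz over the low-frequency blocks in $S_{k-1}v$, the H\"older splitting $L^{2p}\cdot L^{2p}\to L^p$ for part (2), and the standard Moser-type product law for part (3), which the paper likewise leaves to the reader. No gaps; the bookkeeping you identify as the crux is exactly where the paper's computation places it.
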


We also have the following interpolation inequality dealing with the term appearing in the right-hand side of \eqref{eq:prodEs}.
\begin{lemma}\label{lem:intp}
  Let $s\in ]0,1[$, $p\in [1,\infty]$, and $\theta: \R^d\rightarrow \R$ be a scalar function. Then there is a positive constant $C=C(s,p,d)$ such that
\begin{equation}\label{eq:intp}
  \sup_{k\geq -1} 2^{k(1-s)} \sqrt{k+2}\|\Delta_k\theta\|_{L^p(\R^d)} \leq C \|\theta\|_{L^1(\R^d)}^{\frac{1}{2+d-s-d/p}} \|\theta\|_{B^{2-s}_{p,\infty}(\R^d)}^{\frac{1+d-s-d/p}{2+d-s-2/p}}
  \sqrt{\log\Big(e + \frac{\|\theta\|_{B^{2-s}_{p,\infty}}}{\|\theta\|_{L^1}}\Big)} + C\|\theta\|_{L^1}.
\end{equation}
\end{lemma}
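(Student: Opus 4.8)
The plan is to estimate each Littlewood--Paley block $\Delta_k\theta$ by two competing bounds and then split the supremum over $k$ at a threshold $N$ chosen to balance them. On one hand, by the very definition of the Besov norm,
\begin{equation*}
  \|\Delta_k\theta\|_{L^p} \leq 2^{-k(2-s)}\|\theta\|_{B^{2-s}_{p,\infty}}, \qquad k\geq -1 .
\end{equation*}
On the other hand, Bernstein's inequality from $L^1$ to $L^p$ gives
\begin{equation*}
  \|\Delta_k\theta\|_{L^p} \leq C\, 2^{kd(1-1/p)}\|\Delta_k\theta\|_{L^1} \leq C\, 2^{kd(1-1/p)}\|\theta\|_{L^1}, \qquad k\geq -1 .
\end{equation*}
Inserting these into the weighted block $2^{k(1-s)}\sqrt{k+2}\,\|\Delta_k\theta\|_{L^p}$ produces, respectively, an upper bound proportional to $\sqrt{k+2}\,2^{-k}\|\theta\|_{B^{2-s}_{p,\infty}}$, which \emph{decays} in $k$, and one proportional to $\sqrt{k+2}\,2^{k\alpha}\|\theta\|_{L^1}$ with $\alpha:=1+d-s-d/p$, which \emph{grows} in $k$ since $\alpha>0$ (guaranteed by $s<1$ and $p\geq 1$).

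Writing $A:=\|\theta\|_{L^1}$ and $B:=\|\theta\|_{B^{2-s}_{p,\infty}}$, I would use the growing (Bernstein) bound for $-1\leq k\leq N$ and the decaying (Besov) bound for $k>N$. By monotonicity each range attains its maximum near the endpoint $k=N$, whence
\begin{equation*}
  \sup_{k\geq -1} 2^{k(1-s)}\sqrt{k+2}\,\|\Delta_k\theta\|_{L^p} \leq C\sqrt{N+2}\,\bigl(2^{N\alpha}A + 2^{-N}B\bigr).
\end{equation*}
When $B\geq A$ I would take the integer $N$ nearest to $\frac{1}{\alpha+1}\log_2(B/A)\geq 0$, which balances the two terms and gives $2^{N\alpha}A+2^{-N}B\leq C\,A^{1/(\alpha+1)}B^{\alpha/(\alpha+1)}$, while $\sqrt{N+2}\leq C\sqrt{\log(e+B/A)}$ because $N\leq C\log(e+B/A)$. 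Recalling $\alpha+1=2+d-s-d/p$, the exponents $\tfrac{1}{\alpha+1}$ and $\tfrac{\alpha}{\alpha+1}$ are exactly those in \eqref{eq:intp} (in the dimension $d=2$ relevant here, $d/p=2/p$). When instead $B<A$, the optimal threshold falls below the admissible range $k\geq -1$, so I would simply use the Besov bound for all $k$ together with $\sup_{k\geq -1}\sqrt{k+2}\,2^{-k}\leq 2$, obtaining $\sup_k(\cdots)\leq 2B<2A$, which is absorbed by the additive term $C\|\theta\|_{L^1}$ in \eqref{eq:intp}.

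The computations above are elementary; the one genuinely delicate point is the bookkeeping that converts the continuous optimum $N^\ast=\frac{1}{\alpha+1}\log_2(B/A)$ into an admissible integer threshold while simultaneously extracting the logarithmic factor $\sqrt{\log(e+B/A)}$ with the correct placement. I must verify that rounding $N^\ast$ to an integer $\geq -1$ costs only harmless multiplicative constants in the two exponential terms, and that the factor $\sqrt{N+2}$ — precisely the place where the weight $\sqrt{k+2}$ on the left-hand side is consumed — is controlled by $\sqrt{\log(e+B/A)}$ uniformly, including across the transition regime $B\approx A$ where one switches between the two cases. The positivity $\alpha=1+d-s-d/p>0$, which is what makes the Bernstein bound grow in $k$ and hence makes the frequency split meaningful, is the structural hypothesis underpinning the whole argument.
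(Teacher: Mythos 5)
Your proof is correct and follows essentially the same route as the paper: a high--low frequency splitting at a threshold $N$, with Bernstein's inequality $\|\Delta_k\theta\|_{L^p}\leq C\,2^{kd(1-1/p)}\|\theta\|_{L^1}$ on the block $-1\leq k\leq N$, the Besov-norm decay on $k\geq N$, and $N$ chosen logarithmically in $\|\theta\|_{B^{2-s}_{p,\infty}}/\|\theta\|_{L^1}$ to balance the two resulting terms, the weight $\sqrt{k+2}$ being absorbed into the factor $\sqrt{\log\big(e+\|\theta\|_{B^{2-s}_{p,\infty}}/\|\theta\|_{L^1}\big)}$. The only cosmetic differences are the placement of the case split (you separate at $B\geq A$, the paper at $\|\theta\|_{B^{2-s}_{p,\infty}}\geq 2\|\theta\|_{L^1}$, taking $N=2$ otherwise) and your correct remark that the exponent denominator in \eqref{eq:intp} should read $2+d-s-d/p$ throughout, which coincides with the printed $2+d-s-2/p$ in the relevant dimension $d=2$.
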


\begin{proof}[Proof of Lemma \ref{lem:intp}]
  Let $N\in \N\cap [2,\infty[$ be an integer chosen later, then by using Bernsteins's inequality we have
\begin{align*}
  \sup_{k\geq -1} 2^{k(1-s)} & \sqrt{k+2}\|\Delta_k\theta\|_{L^p(\R^d)} \leq \sup_{-1\leq k\leq N} 2^{k(1-s)} \sqrt{k+2}\|\Delta_k\theta\|_{L^p} + \sup_{k\geq N} 2^{k(1-s)} \sqrt{k+2}\|\Delta_k\theta\|_{L^p} \\
  & \leq C_0 \sup_{-1\leq k\leq N} 2^{k(1-s)} \sqrt{2+k} 2^{k(d-\frac{d}{p})}\|\Delta_k\theta\|_{L^1} + C_0\sup_{k\geq N} \sqrt{2 +k} 2^{-k} 2^{k(2-s)}\|\Delta_k\theta\|_{L^p} \\
  & \leq C_0 2^{N(1+d-s-\frac{d}{p})} \sqrt{N} \|\theta\|_{L^1(\R^d)} + C_0 2^{-N} \sqrt{N} \|\theta\|_{B^{2-s}_{p,\infty}(\R^d)}.
\end{align*}
Now we define the constant $N$ as
\begin{equation}\label{eq:N}
  N:=
  \begin{cases}
    2,\quad & \textrm{if   }\quad \|\theta\|_{B^{2-s}_{p,\infty}(\R^d)} \leq 2\|\theta\|_{L^1(\R^d)}, \\
    \Big[\frac{1}{2+d-s -d/p}\log \Big(\frac{\|\theta\|_{B^{2-s}_{p,\infty}(\R^d)}}{\|\theta\|_{L^1(\R^d)}}\Big)\Big]+1,
    \quad & \textrm{if   }\quad \|\theta\|_{B^{2-s}_{p,\infty}(\R^d)} \geq 2\|\theta\|_{L^1(\R^d)},
  \end{cases}
\end{equation}
where notation $[a]$ means the integer part of $a\in\R$, then it is clear that the desired inequality \eqref{eq:intp} is followed by a direct computation.
\end{proof}

The following $L^2$-based estimate on the linear Stokes system is useful in the uniqueness proof.
\begin{lemma}[cf. Lemma 3 of \cite{DanM13}]\label{lem:Stokes}
  Let $R$ be a vector field satisfying $R_t\in L^2(\R^d\times ]0,T])$ and $\nabla \divg R\in L^2(\R^d\times ]0,T])$. Then the following system
\begin{equation}
\begin{cases}
  \partial_t u - \Delta u + \nabla P = f,\quad & \mathrm{in}\;\; \R^d\times ]0,T], \\
  \divg u =\divg R, \quad & \mathrm{in}\;\; \R^d\times ]0,T], \\
  u|_{t=0}= u_0,\quad & \mathrm{on}\;\; \R^d,
\end{cases}
\end{equation}
admits a unique solution $(u,\nabla P)$ which satisfies that
\begin{equation}
  \|\nabla u\|_{L^\infty_T (L^2)} + \|(u_t, \nabla^2 u,\nabla P)\|_{L^2_T (L^2)} \leq C \Big( \|\nabla u_0\|_{L^2} + \|(f,R_t)\|_{L^2_T(L^2)} + \|\nabla\divg R\|_{L^2_T (L^2)} \Big),
\end{equation}
where $C$ is a positive constant independent of $T$.
\end{lemma}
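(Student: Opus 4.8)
The plan is to reduce the generalized Stokes system to a genuine heat equation for the solenoidal part of $u$ via the Helmholtz decomposition, and then to recover the full velocity and the pressure explicitly. Write $\mathbb{P}$ for the Leray projector onto divergence-free fields and $\mathbb{Q}=\Id-\mathbb{P}=\nabla\Delta^{-1}\divg$ for its complement, both bounded Fourier multipliers on $L^2(\R^d)$. Since $\divg u=\divg R$, the field $u-R$ is divergence-free, so $\mathbb{Q}u=\mathbb{Q}R$ is known explicitly in terms of the data; I would set $w:=\mathbb{P}u=u-\mathbb{Q}R$, which is solenoidal with $w|_{t=0}=\mathbb{P}u_0$, the compatibility relation $\divg u_0=\divg R|_{t=0}$ being forced by the system at $t=0$.

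Next I would apply $\mathbb{P}$ to the momentum equation. Since $\mathbb{P}$ commutes with $\partial_t$ and $\Delta$ and annihilates gradients, $w$ solves the heat equation $\partial_t w-\Delta w=\mathbb{P}f$ with datum $\mathbb{P}u_0$, and classical $L^2$ maximal regularity for the heat semigroup gives $\|\nabla w\|_{L^\infty_T(L^2)}+\|(w_t,\nabla^2 w)\|_{L^2_T(L^2)}\le C(\|\nabla u_0\|_{L^2}+\|f\|_{L^2_T(L^2)})$ with $T$-independent constant, using $\|\mathbb{P}f\|_{L^2}\le\|f\|_{L^2}$ and $\|\nabla\mathbb{P}u_0\|_{L^2}\le\|\nabla u_0\|_{L^2}$. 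For the pressure I would instead apply $\mathbb{Q}$ to the equation for $u=w+\mathbb{Q}R$; using $\Delta\mathbb{Q}R=\nabla\divg R$ and $\partial_t\mathbb{Q}R=\mathbb{Q}R_t$ this yields $\nabla P=\mathbb{Q}f+\nabla\divg R-\mathbb{Q}R_t$, so $L^2$-boundedness of $\mathbb{Q}$ gives the claimed bound on $\|\nabla P\|_{L^2_T(L^2)}$ in terms of $\|f\|_{L^2_T(L^2)}$, $\|\nabla\divg R\|_{L^2_T(L^2)}$ and $\|R_t\|_{L^2_T(L^2)}$.

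It then remains to transfer the estimates from $w$ to $u=w+\mathbb{Q}R$. The terms $u_t=w_t+\mathbb{Q}R_t$ and $\nabla^2 u=\nabla^2 w+\nabla^2\mathbb{Q}R$ cause no trouble: $\|\mathbb{Q}R_t\|_{L^2}\le C\|R_t\|_{L^2}$, while in Fourier variables $\widehat{\partial_k\partial_i(\mathbb{Q}R)_j}=\tfrac{\xi_i\xi_j}{|\xi|^2}\,\widehat{\partial_k\divg R}$ exhibits $\nabla^2\mathbb{Q}R$ as an $L^2$-bounded multiplier applied to $\nabla\divg R$, whence $\|\nabla^2\mathbb{Q}R\|_{L^2}\le C\|\nabla\divg R\|_{L^2}$. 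The delicate quantity is $\|\nabla u\|_{L^\infty_T(L^2)}$: the gradient part contributes $\nabla\mathbb{Q}R$, whose symbol $\tfrac{\xi_i\xi_j}{|\xi|^2}$ only yields $\|\nabla\mathbb{Q}R\|_{L^2}\le C\|\divg R\|_{L^2}$, so I must control $\divg R$ in $L^\infty_T(L^2)$, not merely $\nabla\divg R$ in $L^2_T(L^2)$.

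I expect this last point to be the main obstacle, and I would settle it by a direct energy estimate on $g:=\divg R$. Differentiating in time and integrating by parts, $\tfrac12\tfrac{\dd}{\dd t}\|g\|_{L^2}^2=\int_{\R^d} g\,\divg R_t\,\dd x=-\int_{\R^d}\nabla g\cdot R_t\,\dd x\le\|\nabla\divg R\|_{L^2}\,\|R_t\|_{L^2}$; integrating in time and using $g|_{t=0}=\divg u_0$ gives $\|\divg R\|_{L^\infty_T(L^2)}^2\le\|\nabla u_0\|_{L^2}^2+\|\nabla\divg R\|_{L^2_T(L^2)}^2+\|R_t\|_{L^2_T(L^2)}^2$, exactly of the required form. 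Collecting the bounds on $w$, on $\mathbb{Q}R$ and on $\nabla P$ produces the stated inequality with a constant independent of $T$. Uniqueness is then immediate from linearity: the difference of two solutions has vanishing data, and the very estimate just proved forces it to vanish.
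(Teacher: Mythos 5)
The paper itself contains no proof of Lemma~\ref{lem:Stokes}: it is quoted verbatim (``cf.\ Lemma 3 of \cite{DanM13}'') from Danchin--Mucha, so the only comparison available is with that standard argument, and your proof is in fact the natural one: reduce via the Helmholtz projectors, solve a heat equation for $\mathbb{P}u$, and read the pressure off the gradient part of the momentum equation. I checked the steps and the proof is correct, with $T$-independent constants throughout since every bound is a pure energy/multiplier estimate with no Gr\"onwall step: the identity $\mathbb{Q}u=\mathbb{Q}R$, the heat maximal regularity $\|\nabla w\|_{L^\infty_T(L^2)}+\|(w_t,\nabla^2w)\|_{L^2_T(L^2)}\leq C(\|\nabla u_0\|_{L^2}+\|f\|_{L^2_T(L^2)})$, the pressure formula $\nabla P=\mathbb{Q}f+\nabla\divg R-\mathbb{Q}R_t$, and the multiplier bounds $\|\nabla^2\mathbb{Q}R\|_{L^2}\leq C\|\nabla\divg R\|_{L^2}$, $\|\nabla\mathbb{Q}R\|_{L^2}\leq C\|\divg R\|_{L^2}$ are all right. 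You also correctly identified the one genuinely delicate point, namely that $\|\nabla u\|_{L^\infty_T(L^2)}$ forces you to control $\|\divg R\|_{L^\infty_T(L^2)}$, which the hypotheses do not give directly; your energy estimate $\frac{1}{2}\frac{\dd}{\dd t}\|\divg R\|_{L^2}^2=-\int_{\R^d}\nabla\divg R\cdot R_t\,\dd x$ closes this with exactly the norms appearing in the statement.

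Three minor remarks, none of which is a gap. First, the compatibility condition $\divg u_0=\divg R|_{t=0}$ that you flag is indeed implicit in the lemma (and is what makes $w|_{t=0}=\mathbb{P}u_0$ and your bound $\|\divg R|_{t=0}\|_{L^2}\leq\|\nabla u_0\|_{L^2}$ legitimate); it is satisfied where the paper applies the lemma, since there $u_0=\delta\bar v|_{t=0}=0$ and $R|_{t=0}=0$. Second, since $R$ itself is not assumed to lie in $L^2$, the object $\mathbb{Q}R$ should be defined through $\divg R$ (say, modulo constants via its gradient, or as $\mathbb{Q}u_0+\int_0^t\mathbb{Q}R_\tau\,\dd\tau$); this is a harmless low-frequency technicality because only $\nabla\mathbb{Q}R$, $\nabla^2\mathbb{Q}R$ and $\mathbb{Q}R_t$ enter your estimates. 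Third, you prove the estimate and uniqueness but leave existence implicit; it deserves one line, namely that $u:=w+\mathbb{Q}R$ with $w$ the heat solution and $\nabla P:=\mathbb{Q}f+\nabla\divg R-\mathbb{Q}R_t$ is checked directly to solve the system, which is exactly the construction your estimates already set up.
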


\subsection{The Lagrangian coordinates}\label{subsec:lag}

The use of Lagrange coordinates plays a fundamental role in the proof of the uniqueness part.
In this subsection, we introduce some notations and basic results related to the Lagrangian coordinates.

Let $X_v(t,y)$ solve the following ordinary differential equation (treating $y$ as a parameter)
\begin{equation}\label{flow}
  \frac{\dd X_v(t,y)}{\dd t} = v (t, X_v(t,y)),\quad X_v(t,y)|_{t=0}=y,
\end{equation}
which directly leads to
\begin{equation}\label{flow2}
  X_v(t,y) = y + \int_0^t v(\tau, X_v(\tau,y))\dd \tau.
\end{equation}

We list some basic properties for the Lagrangian change of variables.
\begin{lemma}\label{lem:Lag}
  Assume that $v\in L^1(0,T; \dot  W^{1,\infty}(\R^d))$. Then the system \eqref{flow} has a unique solution $X_v(t,y)$ on the time interval $[0,T]$
satisfying $\nabla_y X_v\in L^\infty(0,T; L^\infty)$ with
\begin{equation}\label{DXvest}
  \|\nabla_y X_v(t)\|_{L^\infty(\R^d)} \leq \exp \set{\int_0^t \|\nabla_x v(\tau)\|_{L^\infty(\R^d)}\dd \tau}.
\end{equation}
Furthermore, denoting by $\bar v(t,y):= v(t,X_v(t,y))$, we have
\begin{equation}\label{Xv}
  X_v(t,y)=y+ \int_0^t\bar v(\tau,y)\dd\tau,
\end{equation}
so that
\begin{equation}\label{DXveq}
  \nabla_y X_v(t,y)= \mathrm{Id} + \int_0^t \nabla_y \bar v(\tau,y)\dd \tau.
\end{equation}
Let $Y(t,\cdot)$ be the inverse diffeomorphism of $X(t,\cdot)$, then
$\nabla_x Y_v(t,x)= \left( \nabla_y X_v(t,y)\right)^{-1}$ with $x=X_v(t,y)$,
and if
\begin{equation}\label{vbar-cd}
  \int_0^t\|\nabla_y \bar v(\tau)\|_{L^\infty(\R^d)}\dd \tau\leq \frac{1}{2},
\end{equation}
we have
\begin{equation}\label{Avbd1}
  |\nabla_x Y_v(t,x) -\Id|\leq 2 \int_0^t |\nabla_y \bar v(\tau, y)|\dd \tau.
\end{equation}
\end{lemma}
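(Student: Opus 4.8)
The plan is to prove Lemma~\ref{lem:Lag} by treating \eqref{flow} as a fixed-point problem for the trajectory map and then reading off the stated Jacobian bounds from elementary ODE identities. First I would establish existence and uniqueness of $X_v(t,y)$. Under the hypothesis $v\in L^1(0,T;\dot W^{1,\infty})$ the right-hand side of the integral equation \eqref{flow2} is Lipschitz in $x$ uniformly in $\tau$ with an $L^1$-in-time Lipschitz constant $\|\nabla_x v(\tau)\|_{L^\infty}$, so a standard Picard/contraction argument on the Banach space $C([0,T];\R^d)$ (for each fixed parameter $y$) yields a unique continuous solution on the whole interval $[0,T]$. The integral form \eqref{flow2} is then immediate, and \eqref{Xv} is just its rewriting in terms of $\bar v(t,y):=v(t,X_v(t,y))$.

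Next I would control the spatial gradient $\nabla_y X_v$. Differentiating \eqref{flow} in $y$ gives the linear variational equation $\frac{\dd}{\dd t}\nabla_y X_v = (\nabla_x v)(t,X_v)\,\nabla_y X_v$ with $\nabla_y X_v|_{t=0}=\Id$; Gronwall's inequality applied to $\|\nabla_y X_v(t)\|_{L^\infty}$ then produces the exponential bound \eqref{DXvest}. Identity \eqref{DXveq} follows by differentiating \eqref{Xv} in $y$. For the inverse map, since \eqref{DXvest} shows $\nabla_y X_v$ is bounded and (by the same variational ODE run backward) invertible, the inverse function theorem gives a diffeomorphism $Y_v(t,\cdot)$ with $\nabla_x Y_v(t,x)=(\nabla_y X_v(t,y))^{-1}$ at $x=X_v(t,y)$.

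Finally I would derive \eqref{Avbd1} under the smallness assumption \eqref{vbar-cd}. Writing $A:=\int_0^t\nabla_y\bar v(\tau,\cdot)\dd\tau$, relation \eqref{DXveq} reads $\nabla_y X_v=\Id+A$ with $\|A\|_{L^\infty}\le \tfrac12$, so the Neumann series $(\Id+A)^{-1}=\sum_{n\ge 0}(-A)^n$ converges and
\begin{equation*}
  \nabla_x Y_v - \Id = (\Id+A)^{-1}-\Id = \sum_{n\ge 1}(-A)^n,
\end{equation*}
whence $|\nabla_x Y_v-\Id|\le \sum_{n\ge1}\|A\|_{L^\infty}^n\le \frac{\|A\|_{L^\infty}}{1-\|A\|_{L^\infty}}\le 2\|A\|_{L^\infty}\le 2\int_0^t|\nabla_y\bar v(\tau,y)|\dd\tau$, which is \eqref{Avbd1}.

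None of the individual steps is deep: they are the classical Cauchy--Lipschitz theory plus a Neumann-series estimate. The only point requiring care is keeping the regularity bookkeeping honest, namely that $v\in L^1(0,T;\dot W^{1,\infty})$ (a time-integrable rather than pointwise-in-time Lipschitz bound) still suffices for both the Picard iteration and the Gronwall step; this is handled by working with the absolutely continuous function $t\mapsto\int_0^t\|\nabla_x v(\tau)\|_{L^\infty}\dd\tau$ throughout, so the ``hard part'' is essentially just ensuring the constants in \eqref{DXvest} and \eqref{Avbd1} depend only on this time integral and not on any pointwise norm of $v$.
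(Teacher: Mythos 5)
Your route is the same as the paper's: for the Cauchy--Lipschitz part (existence, uniqueness, the bounds \eqref{DXvest}--\eqref{DXveq}, and the inverse diffeomorphism) the paper simply refers to Proposition 1 of the cited Danchin--Mucha work, and for \eqref{Avbd1} it uses precisely your Neumann-series expansion $\nabla_x Y_v=\bigl(\Id+(\nabla_y X_v-\Id)\bigr)^{-1}=\sum_{k\ge0}(-1)^k\bigl(\int_0^t\nabla_y\bar v(\tau,y)\dd\tau\bigr)^k$ under the smallness condition \eqref{vbar-cd}. Filling in the Picard iteration, the variational equation and the Gronwall step, rather than citing them, changes nothing of substance.

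There is, however, one step you should repair: your final chain of inequalities mixes up the $L^\infty$ norm and the pointwise value, and as written the last inequality $\|A\|_{L^\infty}\le\int_0^t|\nabla_y\bar v(\tau,y)|\dd\tau$ goes in the wrong direction (a supremum over $y$ cannot be bounded by the value at a single $y$). Note that \eqref{Avbd1} is a pointwise bound at $x=X_v(t,y)$, so the series must be estimated pointwise: setting $A(y):=\int_0^t\nabla_y\bar v(\tau,y)\dd\tau$, one has
\begin{equation*}
  |\nabla_x Y_v(t,x)-\Id|\le\sum_{n\ge1}|A(y)|^{n}\le\frac{|A(y)|}{1-|A(y)|}\le 2\,|A(y)|\le 2\int_0^t|\nabla_y\bar v(\tau,y)|\dd\tau,
\end{equation*}
where the hypothesis \eqref{vbar-cd} enters only through $|A(y)|\le\|A\|_{L^\infty}\le\tfrac12$, which bounds the factor $(1-|A(y)|)^{-1}$ by $2$. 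Your version, read literally, proves only the weaker uniform statement $\|\nabla_x Y_v-\Id\|_{L^\infty}\le 2\|A\|_{L^\infty}$. The fix is a one-line rewrite of the same argument, but it matters: it is exactly this kind of pointwise control of Neumann series (see \eqref{Av-es1} and its use in the uniqueness section) that lets the paper take $L^p$ norms with $p<\infty$ of quantities like $A_{v_1}-A_{v_2}$ later on.
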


\begin{proof}[Proof of Lemma \ref{lem:Lag}]
The proof is standard, and one can refer to \cite[Proposition 1]{DanM13} for details. We only note that as long as $\nabla_y X_v -\Id=\int_0^t \nabla_y \bar v(\tau,y)\dd \tau$
is sufficiently small so that \eqref{vbar-cd} holds, we have
\begin{equation}\label{DxY}
  \nabla_x Y_v = \left( \Id + (\nabla_y X_v -\Id)\right)^{-1} = \sum_{k=0}^\infty (-1)^k \left( \int_0^t \nabla_y \bar v(\tau,y)\dd \tau\right)^k,
\end{equation}
which immediately leads to \eqref{Avbd1}.
\end{proof}

Under the assumption $v\in L^1(0,T; \dot  W^{1,\infty}(\R^d))$, and using the Lagrangian coordinates introduced as above, we set
\begin{equation}
\begin{split}
  \bar{\mu}(t,y):=\mu(t, X_v(t,y)),\quad \bar{\theta}(t,y):= \theta(t, X_v(t,y)),\quad \overline{p}(t,y):= p(t,X_v(t,y)),
\end{split}
\end{equation}
then according to the deduction as in \cite{DanM12} or \cite{DanM13}, the Boussinesq type system \eqref{BoussEq} recasts in
\begin{equation}\label{L-BoussEq}
\begin{cases}
  \partial_t \bar{\mu} =0, \\
  \partial_t \bar{\theta} -\divg\left( A_v A_v^{\textrm{T}} \nabla_y \bar{\theta}\right)  = \bar \mu, \\
  \partial_t \bar{v} -\divg\left( A_v A_v^{\textrm{T}} \nabla_y \bar{v}\right)  + A_v^{\textrm{T}}\nabla_y \overline{p} = \bar{\theta} \,e_d, \\
  \divg_y\left( A_v \bar{v} \right) =0, \\
  \bar{\mu}|_{t=0}= \mu_0,\quad \bar{\theta}|_{t=0}= \theta_0,\quad \bar{v}|_{t=0}=v_0,
\end{cases}
\end{equation}
where we have adopted the notation
\begin{equation}\label{Av}
  A_v(t,y):=(\nabla_y X_v(t,y))^{-1}.
\end{equation}

As pointed out by \cite{DanM12,DanM13}, under the condition \eqref{vbar-cd}, the system \eqref{L-BoussEq} in the Lagrangian coordinates is equivalent to the system \eqref{BoussEq} in the Eulerian coordinates.
\vskip0.2cm

The first equation of \eqref{L-BoussEq} guarantees
\begin{equation}\label{bar-mu}
  \bar{\mu}(t,y)\equiv \mu_0(y),\quad \forall t\in [0,T],
\end{equation}
thus the system \eqref{L-BoussEq} reduces to
\begin{equation}\label{L-BoussEq2}
\begin{cases}
  \partial_t \bar{\theta} -\divg\left( A_v A_v^{\textrm{T}} \nabla_y \bar{\theta}\right)  = \mu_0, \\
  \partial_t \bar{v} -\divg\left( A_v A_v^{\textrm{T}} \nabla_y \bar{v}\right)  + A_v^{\textrm{T}}\nabla_y \overline{p} = \bar{\theta} \,e_d, \\
  \divg_y\left( A_v \bar{v} \right) =0, \\
  \bar{\theta}|_{t=0}= \theta_0,\quad \bar{v}|_{t=0}=v_0.
\end{cases}
\end{equation}

\section{Proof of Theorem \ref{thm:2Dgwp}}\label{sec:thm-2dgwp}


\subsection{\textit{A priori} estimates}\label{subsec:apE}

\begin{proposition}\label{prop:ape1}
  Let $\mu_0\in \mathcal{M}_+(\R^2)$ be satisfying that $\mathrm{supp}\,\mu_0\subset B_{R_0}(0)$ for some $R_0>0$.
Let $T>0$ be any given, and $(\mu, \theta,v)$ be smooth functions on $\R^2\times [0,T]$ solving the system \eqref{BoussEq}.
Then for every $t\in[0,T]$, we have $\mu(t,x)=\mu_t(x)\in \mathcal{M}_+(\R^2)$ with
\begin{equation}\label{mu-es}
  \|\mu_t\|_{\mathcal{M}(\R^2)} \leq \|\mu_0\|_{\mathcal{M}(\R^2)},\quad \forall t\in[0,T],
\end{equation}
and also $\mathrm{supp}\,\mu_t \subset B_{R_0+C}(0)$ with $C=\|v\|_{L^1_T (L^\infty)}$.
\end{proposition}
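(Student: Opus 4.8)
The plan is to prove Proposition \ref{prop:ape1} by working with the characteristic flow of the velocity field and reading off both conclusions from the transport equation $(\ref{BoussEq})_1$. Since $(\mu,\theta,v)$ are assumed smooth and $v$ is divergence-free, the first equation says $\mu$ is merely advected by the flow: introducing the flow map $X(t,y)$ solving $\frac{\dd}{\dd t}X(t,y)=v(t,X(t,y))$ with $X(0,y)=y$, the solution of $(\ref{BoussEq})_1$ is $\mu_t = (X(t,\cdot))_\# \mu_0$, the push-forward of the initial measure along the flow. Because $v$ is divergence-free the flow is measure-preserving (its Jacobian is identically $1$), which is the structural fact driving both estimates.

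First I would establish the total-variation bound \eqref{mu-es}. The cleanest route is the weak (distributional) formulation: for any test function $f\in C_0(\R^2)$ with $\|f\|_{L^\infty}\leq 1$, one has $\int_{\R^2} f\,\dd\mu_t = \int_{\R^2} f(X(t,y))\,\dd\mu_0(y)$, and since $\|f\circ X(t,\cdot)\|_{L^\infty}\leq 1$ this is bounded by $\|\mu_0\|_{\mathcal{M}}$. Taking the supremum over such $f$ gives $\|\mu_t\|_{\mathcal{M}}\leq\|\mu_0\|_{\mathcal{M}}$; the divergence-free condition in fact yields equality, but the stated inequality suffices. Nonnegativity of $\mu_t$ is immediate from the push-forward representation, since pushing forward a nonnegative measure produces a nonnegative measure, so $\mu_t\in\mathcal{M}_+(\R^2)$ for every $t$.

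Next I would establish the support propagation. The support of the push-forward satisfies $\mathrm{supp}\,\mu_t \subset X(t,\mathrm{supp}\,\mu_0) \subset X(t, B_{R_0}(0))$, so it suffices to bound how far the flow can displace points of $B_{R_0}(0)$. From the integral form \eqref{flow2}, for any $y$ we have $|X(t,y)-y|\leq \int_0^t |v(\tau,X(\tau,y))|\,\dd\tau \leq \|v\|_{L^1_T(L^\infty)}=:C$. Hence if $|y|<R_0$ then $|X(t,y)|<R_0+C$, giving $\mathrm{supp}\,\mu_t\subset B_{R_0+C}(0)$ as claimed.

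I do not expect any single step to be a genuine obstacle here, since everything reduces to elementary properties of the measure-preserving characteristic flow and the duality definition of the total-variation norm; the main point requiring care is justifying the push-forward representation $\mu_t=(X(t,\cdot))_\#\mu_0$ rigorously for the smooth solution, i.e.\ checking that this formula indeed solves $(\ref{BoussEq})_1$ in the distributional sense. This follows by differentiating $\int f\,\dd\mu_t=\int f(X(t,y))\,\dd\mu_0(y)$ in time and using the chain rule together with the ODE \eqref{flow}, which produces exactly the weak form of $\partial_t\mu+v\cdot\nabla\mu=0$; uniqueness of the transport equation then identifies this with the given $\mu$. The only mild subtlety is that $\mu_0$ is a measure rather than a function, so all manipulations must be phrased via test functions rather than pointwise, but the smoothness of $v$ and the compact support of $\mu_0$ make every integral well-defined.
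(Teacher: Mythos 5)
Your proof is correct and follows essentially the same route as the paper: both represent $\mu_t$ via the characteristic flow (the paper writes $\mu_t = \mu_0(Y_t(\cdot))$, which is precisely your push-forward $(X(t,\cdot))_\#\mu_0$), obtain \eqref{mu-es} by testing against $C_0$ functions and changing variables along the flow, and get the support bound from the displacement estimate $|X(t,y)-y|\leq \|v\|_{L^1_T(L^\infty)}$ coming from \eqref{flow2}. Your extra remark that divergence-freeness is not actually needed for the inequality (only for equality of total variations) is a fine observation, but otherwise the two arguments coincide.
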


\begin{proof}[Proof of Proposition \ref{prop:ape1}]
  Let $X_t(y)=X(t,y)$ be the flow function generated by the velocity $v$, which solves equation \eqref{flow} or \eqref{flow2}.
Since we assume that $v\in L^1([0,T]; W^{1,\infty}(\R^2))$, from Lemma \ref{lem:Lag}, it admits a unique vector field $X_t:\R^2\rightarrow \R^2$, $t\in [0,T]$ which is a diffeomorphism.

Let $Y_t=Y(t,\cdot)$ be the inverse diffeomorphism of $X_t$, then we see that
\begin{equation}\label{muExp}
  \mu(t,x)= \mu_t(x) = \mu_0(Y_t(x)).
\end{equation}
Clearly, $\mu_t\geq 0$, and since $X_t$ is volume-preserving (from the divergence-free property of $v$), we have
\begin{equation*}
\begin{split}
  \|\mu_t\|_{\mathcal{M}(\R^2)}= \sup_{\|g\|_{L^\infty}\leq 1} \left| \int_{\R^2} g(x) \dd \mu_t(x)\right|
  & =\sup_{\|g\|_{L^\infty}\leq 1} \left| \int_{\R^2} g(x) \dd \mu_0(Y_t(x))\right| \\
  & =\sup_{\|g\|_{L^\infty}\leq 1} \left| \int_{\R^2} g(X_t(y)) \dd \mu_0(y)\right| \\
  & \leq \sup_{\|\tilde{g}\|_{L^\infty}\leq 1} \left| \int_{\R^2} \tilde{g}(y) \dd \mu_0(y)\right|= \|\mu_0\|_{\mathcal{M}(\R^2)},
\end{split}
\end{equation*}
where the supremum is taken over all $C_0(\R^2)$ fuctions.

From \eqref{muExp} and $\mathrm{supp}\,\mu_0\subset B_{R_0}(0)$, we get $\mathrm{supp}\,\mu_t\subset X_t(B_{R_0}(0))$, and thus formula \eqref{flow2} implies that $\mathrm{supp}\,\mu_t \subset B_{R_0 + \|v\|_{L^1_T (L^\infty)}}(0)$.
\end{proof}

\begin{proposition}\label{prop:ape2}
  Let $\mu_0\in \mathcal{M}_+(\R^2)$ with $\mathrm{supp}\,\mu_0\subset B_{R_0}(0)$ for some $R_0>0$, and $\theta_0 \in L^1(\R^2)$ with $\theta_0\geq 0$.
For $T>0$ any given, let $(\mu, \theta,v)$ be smooth functions on $\R^2\times [0,T]$ solving system \eqref{BoussEq}.
Then we have that $\theta(t)\geq 0$ for every $t\in[0,T]$ and also
\begin{equation}\label{theL1es}
  \sup_{t\in[0,T]}\|\theta(t)\|_{L^1(\R^2)}\leq \|\theta_0\|_{L^1(\R^2)} + T \|\mu_0\|_{\mathcal{M}(\R^2)}.
\end{equation}
\end{proposition}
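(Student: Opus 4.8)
The plan is to establish the two claims, positivity of $\theta$ and the $L^1$-bound \eqref{theL1es}, using the structure of the temperature equation $(\ref{BoussEq})_2$, namely $\partial_t\theta + v\cdot\nabla\theta - \Delta\theta = \mu$ together with the divergence-free condition on $v$ and the nonnegativity of the source $\mu$.

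For the positivity, I would argue by a maximum-principle/energy method applied to the negative part $\theta_-:=\max\{-\theta,0\}$. Since $\mu=\mu_t\geq 0$ by Proposition \ref{prop:ape1} and $\theta_0\geq 0$, the idea is that the transport-diffusion semigroup preserves nonnegativity and the source only pushes $\theta$ upward. Concretely, I would multiply the equation by $-\mathrm{sign}_-(\theta)$ (or more carefully test against $-\theta_-$), use $\int_{\R^2}(v\cdot\nabla\theta)\,\theta_-\,\dd x=0$ which follows from $\divg v=0$ after integration by parts, and use $\int_{\R^2}(-\Delta\theta)(-\theta_-)\,\dd x=-\int|\nabla\theta_-|^2\,\dd x\leq 0$ together with $\int \mu\,(-\theta_-)\,\dd x\leq 0$ since $\mu\geq 0$ and $\theta_-\geq 0$. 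This yields $\frac{\dd}{\dd t}\|\theta_-(t)\|_{L^2}^2\leq 0$, and since $\theta_-(0)=0$ we conclude $\theta_-\equiv 0$, i.e. $\theta(t)\geq 0$.

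For the $L^1$-estimate, once positivity is known I would integrate the equation directly over $\R^2$. The divergence-free condition gives $\int_{\R^2} v\cdot\nabla\theta\,\dd x=\int_{\R^2}\divg(\theta v)\,\dd x=0$, and $\int_{\R^2}\Delta\theta\,\dd x=0$ (both by integration by parts, using decay at infinity). Hence $\frac{\dd}{\dd t}\int_{\R^2}\theta\,\dd x=\int_{\R^2}\mu_t\,\dd x$. Because $\theta(t)\geq 0$, we have $\|\theta(t)\|_{L^1}=\int_{\R^2}\theta\,\dd x$, and $\int_{\R^2}\mu_t\,\dd x=\|\mu_t\|_{\mathcal{M}(\R^2)}\leq\|\mu_0\|_{\mathcal{M}(\R^2)}$ by \eqref{mu-es}. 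Integrating in time over $[0,t]$ with $t\leq T$ then gives
\begin{equation*}
  \|\theta(t)\|_{L^1(\R^2)} = \|\theta_0\|_{L^1(\R^2)} + \int_0^t \|\mu_\tau\|_{\mathcal{M}(\R^2)}\,\dd\tau \leq \|\theta_0\|_{L^1(\R^2)} + T\|\mu_0\|_{\mathcal{M}(\R^2)},
\end{equation*}
which is exactly \eqref{theL1es}.

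The main obstacle I anticipate is purely at the level of rigorous justification of the integration by parts at spatial infinity, since $\mu$ is only a measure and $\theta$ lives in the low-regularity Besov framework. For the \emph{a priori} estimate, however, the proposition assumes $(\mu,\theta,v)$ are smooth solutions, so decay of $\theta$ and $\nabla\theta$ can be taken for granted and all boundary terms vanish; the only care needed is that $\theta\in L^1$ with integrable gradient so that $\int\Delta\theta\,\dd x=0$ and $\int\divg(\theta v)\,\dd x=0$ hold. A minor technical point is the nonsmoothness of $\theta\mapsto\theta_-$ in the positivity argument, which is handled in the standard way by a smooth approximation of the function $r\mapsto r_-$ (or by Stampacchia truncation), passing to the limit at the end.
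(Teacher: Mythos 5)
Your proposal is correct, and for the nonnegativity claim it takes a genuinely different route from the paper. The paper argues pointwise, via a minimum-principle contradiction: it sets $\tilde\theta=\theta e^{-t}$, supposes $\inf_{]0,T]\times\R^2}\tilde\theta=-\lambda<0$, uses the decay of the smooth solution to ensure the infimum is attained at some $(t_*,x_*)$, and then derives a contradiction from $\nabla\tilde\theta(t_*,x_*)=0$, $\Delta\tilde\theta(t_*,x_*)\geq 0$ and $\mu\geq 0$. Your argument is instead an energy/Stampacchia truncation on $\theta_-$, which buys a cleaner integration with the $L^2$-framework (no need for the infimum to be attained, at the cost of the routine regularization of $r\mapsto r_-$ that you already flag); both work equally well under the proposition's smoothness hypotheses. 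One sign slip worth fixing: $\int_{\R^2}(-\Delta\theta)(-\theta_-)\,\dd x=+\int_{\R^2}|\nabla\theta_-|^2\,\dd x\geq 0$, not $-\int|\nabla\theta_-|^2\leq 0$; since this term sits on the left-hand side of the energy identity as a dissipation, the conclusion $\frac{\dd}{\dd t}\|\theta_-(t)\|_{L^2}^2\leq 0$ is unaffected, but the identity as you wrote it is false. For the $L^1$ bound your computation is in substance the paper's; the difference is purely in the handling of spatial infinity and of the measure: the paper tests with compactly supported cutoffs $\varphi_R$, bounds the source term through the duality pairing $(\mu(t),\varphi_R)\leq\|\mu_0\|_{\mathcal{M}}$, and passes $R\to\infty$ using $v,\theta,\nabla\theta\in L^2([0,T]\times\R^2)$ — which is exactly the rigorous version of the step you acknowledge taking for granted, and is why the paper obtains only the inequality \eqref{theL1es} rather than your intermediate identity.
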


\begin{proof}[Proof of Proposition \ref{prop:ape2}]
  We first prove the nonnegativity property of $\theta(t)$. The proof is standard (e.g. see \cite{LR10}) and it uses a contradiction argument.
Denote by $\Omega_T:= ]0,T]\times \R^2$. We define $\tilde{\theta}(t,x)= \theta(t,x) e^{-t}$ and assume that there is a constant $\lambda>0$ so that
\begin{equation*}
  \inf_{(t,x)\in \Omega_T} \tilde{\theta}(t,x)= -\lambda.
\end{equation*}
Such a constant $\lambda$ exists since we assume $\tilde\theta$ is a bounded smooth function. We also infer that there exists some point $(t_*,x_*)\in \Omega_T$ attaining this infimum.
Indeed, if not, there exists a sequence of points $(t_n,x_n)_{n\in\N}$ becoming unbounded such that $\tilde\theta(t_n,x_n)\rightarrow -\lambda$
as $n\rightarrow \infty$, which is a contradiction with the assumption that $\tilde\theta$ is a smooth function with suitable spatial decay.

From the equation of $\tilde\theta$, we get
\begin{equation*}
  (\partial_t \tilde\theta)(t_*,x_*) = - \tilde\theta(t_*,x_*) - (v\cdot\nabla \tilde\theta)(t_*,x_*) + \Delta \tilde\theta(t_*,x_*).
\end{equation*}
Due to that $\tilde\theta$ attains the infimum at $(t_*,x_*)$, it yields that $(\nabla \tilde\theta)(t_*,x_*)=0$ and $(\Delta \tilde\theta)(t_*,x_*)\geq 0$,
thus we find
\begin{equation*}
  (\partial_t \tilde\theta)(t_*,x_*) \geq - \tilde\theta(t_*,x_*)=\lambda.
\end{equation*}
But this clearly contradicts with the fact that $(t_*,x_*)$ is the infimum point of $\tilde\theta$, hence the nonnegativity of $\theta$ for every $t\in [0,T]$
is followed. Note that in the above proof  the smoothness of $\theta$ is required.
So this part work for smooth approximation of solutions (see Section \ref{subsec:exe}). Passage to the limit saves the nonnegativity of the temperature.

Next, we show $\theta\in L^\infty([0,T]; L^1(\R^2))$.
Let $\varphi\in \mathcal{D}(\R^2)$ be a test function satisfying $\mathrm{supp}\,\varphi \subset B_1(0)$, $\varphi \equiv 1$ on $B_{1/2}(0)$, and $0\leq\varphi \leq 1$.
Set $\varphi_R:=\varphi(\frac{\cdot}{R})$ for every $R>0$. Multiplying both sides of the equation of $\theta$ with $\varphi_R$ and integrating on the spatial variable,
we obtain
\begin{equation*}
  \frac{\dd }{\dd t}\int_{\R^2} \theta(t) \varphi_R \dd x + \int_{\R^2} v\cdot\nabla \theta\, \varphi_R \dd x - \int_{\R^2} \Delta\theta\, \varphi_R \dd x = \int_{\R^2} \mu\, \varphi_R \dd x .
\end{equation*}
By viewing the measure $\mu(t)$ as the dual space of $C_0(\R^2)$, we deduce that
\begin{equation*}
  \int_{\R^2} \mu(t,x)\, \varphi_R(x) \dd x = (\mu(t),\varphi_R) \leq \|\mu(t)\|_{\mathcal{M}}\leq \|\mu_0\|_{\mathcal{M}}.
\end{equation*}
Thus integrating on the time interval $[0,t]$ ($t\in [0,T]$) and using integration by parts, we find
\begin{equation*}
  \int_{\R^2} \theta(t) \varphi_R \dd x \leq  \int_{\R^2} \theta_0 \varphi_R \dd x + \frac{1}{R} \int_0^T\int_{\R^2} |v\theta| \Big|\nabla \varphi(\frac{x}{R})\Big| \dd x \dd t
  + \frac{1}{R} \int_0^T\int_{\R^2} |\nabla\theta| \Big|\nabla \varphi(\frac{x}{R})\Big| \dd x \dd t + T \|\mu_0\|_{\mathcal{M}} .
\end{equation*}
Since we assume $\theta,v$ are smooth functions which guarantees that $v,\theta,\nabla\theta \in L^2([0,T]\times \R^2)$, by passing $R$ to $\infty$, it yields that
\begin{equation*}
  \int_{\R^2} \theta(t,x) \dd x \leq \int_{\R^2} \theta_0(x) \dd x
  + T\|\mu_0\|_{\mathcal{M}}.
\end{equation*}
Hence the desired inequality \eqref{theL1es} is followed from the nonnegativity of $\theta(t)$.
\end{proof}

\begin{proposition}\label{prop:ape3}
  Let $\mu_0\in \mathcal{M}_+(\R^2)$ with $\mathrm{supp}\,\mu_0\subset B_{R_0}(0)$ for some $R_0>0$. For each $\sigma\in ]0,2[$, let $\theta_0 \in L^1\cap B^{2-\sigma}_{\frac{4}{4-\sigma},\infty}(\R^2)$ with $\theta_0\geq 0$,
and $v_0\in H^1(\R^2)$ be a divergence-free vector field with initial vorticity $\omega_0:=\partial_1 v_{2,0}-\partial_2 v_{1,0}\in B^{3-\sigma}_{\frac{4}{4-\sigma},\infty}(\R^2)$.
Let $T>0$ be any given, and assume that $(\mu, \theta,v)$ are smooth functions on $\R^2\times [0,T]$ solving the system \eqref{BoussEq}.
Then we have
\begin{equation}\label{eq:ape1}
  \|\theta\|_{L^\infty_T (B^{2-\sigma}_{\frac{4}{4-\sigma},\infty}(\R^2))} + \|v\|_{L^\infty_T(H^1(\R^2))} + \|v\|_{L^2_T (H^2(\R^2))} \leq C e^{\exp(C(1+T)^8)},
\end{equation}
and
\begin{equation}\label{eq:ape2}
  \|\nabla v\|_{L^\infty_T (B^{3-\sigma}_{\frac{4}{4-\sigma},\infty}(\R^2))} + \|v\|_{L^\infty_T(W^{1,\infty}(\R^2))} + \|(\nabla p, \partial_t v)\|_{L^\infty_T (B^{2-\sigma}_{\frac{4}{4-\sigma},\infty}(\R^2))}
  \leq C e^{\exp(C(1+T)^8)},
\end{equation}
where $C>0$ depends only on $\sigma$ and the norms of initial data $(\mu_0,\theta_0,v_0)$.
\end{proposition}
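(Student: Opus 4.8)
The plan is to prove \eqref{eq:ape1} and \eqref{eq:ape2} by a coupled bootstrap in which the heat equation bounds the temperature in terms of the velocity energy, while the velocity energy is in turn closed by a logarithmic Grönwall inequality. Throughout set $p:=\tfrac{4}{4-\sigma}$. Two inputs are already at hand. From Proposition \ref{prop:ape1} we have $\|\mu(t)\|_{\mathcal{M}}\le\|\mu_0\|_{\mathcal{M}}$, and since a Littlewood--Paley block acts as an $L^1$-bounded convolution, Bernstein's inequality gives $\|\Delta_k\mu\|_{L^p}\le C2^{k\sigma/2}\|\mu\|_{\mathcal{M}}$, so that $\|\mu\|_{L^\infty_T(B^{-\sigma}_{p,\infty})}\le C\|\mu_0\|_{\mathcal{M}}$. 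From Proposition \ref{prop:ape2} we have the uniform control $\|\theta\|_{L^\infty_T(L^1)}\le\|\theta_0\|_{L^1}+T\|\mu_0\|_{\mathcal{M}}=:\Theta_1(T)$.

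I would first read $(\ref{BoussEq})_2$ as $\partial_t\theta-\Delta\theta=\mu-v\cdot\nabla\theta$ and apply the heat estimate \eqref{eq:heat-ape} with $s=2-\sigma$, bounding $\|\theta\|_{L^\infty_t(B^{2-\sigma}_{p,\infty})}$ by $\|\theta_0\|_{B^{2-\sigma}_{p,\infty}}$ plus $(1+t)$ times the $B^{-\sigma}_{p,\infty}$-norm of $\mu-v\cdot\nabla\theta$. The transport term is handled by the negative-index product estimate \eqref{eq:prodEs} (with $s=\sigma$) followed by the logarithmic interpolation \eqref{eq:intp}; writing $V(t):=\|v\|_{L^\infty_t(H^1)}$, $\Theta(t):=\|\theta\|_{L^\infty_t(B^{2-\sigma}_{p,\infty})}$ and $\vartheta:=\tfrac{2}{4-\sigma}$, these combine into
\[
\Theta(t)\le C_0+C(1+t)V(t)\Big(\Theta_1^{\vartheta}\,\Theta(t)^{1-\vartheta}\sqrt{\log\big(e+\Theta(t)/\Theta_1\big)}+\Theta_1\Big),
\]
where a short computation with $d=2$ confirms the exponent identities $\vartheta=\tfrac{2}{4-\sigma}$ (the $L^1$ weight) and $b=1-\vartheta=\tfrac{2-\sigma}{4-\sigma}$ (the $B^{2-\sigma}_{p,\infty}$ weight). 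Since $1-\vartheta<1$, Young's inequality resolves this implicit bound for $\Theta$; interpolating $L^2$ between $L^1$ and $B^{2-\sigma}_{p,\infty}$ with the same exponent $\vartheta$ then yields the crucial estimate
\[
\|\theta(t)\|_{L^2}^2\le C\,\Theta_1^{2(1-\vartheta)}\Theta(t)^{2\vartheta}\le C(1+t)^4\,V(t)^2\,\log\big(e+(1+t)V(t)\big).
\]

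Next I would run the $L^2$ energy estimates for $v$ and for the vorticity $\omega=\partial_1v_2-\partial_2v_1$, which solves $\partial_t\omega+v\cdot\nabla\omega-\Delta\omega=\partial_1\theta$. Testing and integrating by parts give $\tfrac{\dd}{\dd t}\|\omega\|_{L^2}^2+\|\nabla\omega\|_{L^2}^2\le\|\theta\|_{L^2}^2$ and an analogous bound for $\|v\|_{L^2}$, so with $E:=\|v\|_{L^2}^2+\|\omega\|_{L^2}^2\simeq\|v\|_{H^1}^2$ one has $\tfrac{\dd}{\dd t}E+\|\nabla v\|_{L^2}^2+\|\nabla\omega\|_{L^2}^2\le C(E+\|\theta\|_{L^2}^2)$. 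Inserting the temperature bound and writing $\mathcal{E}(t):=\sup_{[0,t]}E=V(t)^2$ leads to
\[
\tfrac{\dd}{\dd t}\mathcal{E}(t)\le C(1+t)^4\,\mathcal{E}(t)\,\log\big(e+(1+t)\sqrt{\mathcal{E}(t)}\big).
\]
This is exactly the \emph{limit case}: the identity $\vartheta/(1-b)=1$ makes the right-hand side linear in $\mathcal{E}$ up to the logarithm, so a logarithmic Grönwall argument applies and produces the double-exponential bound $\mathcal{E}(t)\le C\exp(\exp(C(1+t)^8))$. Integrating the dissipation in time then gives $\|v\|_{L^2_T(H^2)}$, and substituting $\mathcal{E}$ back into the temperature estimate controls $\|\theta\|_{L^\infty_T(B^{2-\sigma}_{p,\infty})}$; together these prove \eqref{eq:ape1}.

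For \eqref{eq:ape2} I would bootstrap from \eqref{eq:ape1}. Applying \eqref{eq:heat-ape} to the vorticity equation with $s=3-\sigma$ and estimating the source ($\partial_1\theta\in B^{1-\sigma}_{p,\infty}$ already controlled, $v\cdot\nabla\omega$ handled by a product estimate of the type \eqref{eq:prodEs3}), a Grönwall argument in the Lipschitz norm propagates $\|\omega\|_{L^\infty_T(B^{3-\sigma}_{p,\infty})}$, and since $3-\sigma>2/p$ the embedding $B^{3-\sigma}_{p,\infty}\hookrightarrow L^\infty$ simultaneously yields $\|v\|_{L^\infty_T(W^{1,\infty})}$. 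The pressure follows from $\Delta p=\partial_2\theta-\divg(v\cdot\nabla v)$ via boundedness of the Riesz operators on $B^{2-\sigma}_{p,\infty}$ together with \eqref{eq:prodEs3} at $s=2-\sigma$, and $\partial_t v$ is read off directly from $(\ref{BoussEq})_3$; all norms inherit the double-exponential bound. \textbf{The hard part} is the borderline $L^2$-estimate of $\theta$: because $\mu$ is only a measure, $\theta$ admits no direct $L^2$ bound, and the interpolation between the cheap $L^1$ control and the velocity-driven $B^{2-\sigma}_{p,\infty}$ bound falls \emph{exactly} on the scaling threshold $\vartheta/(1-b)=1$, where naive power-counting would close only a quadratic (hence non-global) feedback. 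Everything then rests on the $\sqrt{\log}$ gain of Lemma \ref{lem:intp}, which downgrades this feedback to a logarithmic Grönwall inequality; verifying the exponent identities and carefully tracking the logarithms, against the genuinely low regularity forced by the measure, is the delicate step.
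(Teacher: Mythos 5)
Your proposal follows essentially the same route as the paper's proof: the heat estimate of Lemma \ref{lem:heatEq} combined with the product estimate \eqref{eq:prodEs} and the logarithmic interpolation of Lemma \ref{lem:intp} to bound $\|\theta\|_{L^\infty_t(B^{2-\sigma}_{4/(4-\sigma),\infty})}$ and hence $\|\theta(t)\|_{L^2}$ in terms of $\|(v,\omega)\|_{L^\infty_t(L^2)}$, the resulting linear-times-logarithm feedback in the $L^2$ energy estimates of $v$ and $\omega$ closed by a logarithmic Gr\"onwall argument, and then the same bootstrap (vorticity equation viewed as a heat equation, embedding of $B^{3-\sigma}_{4/(4-\sigma),\infty}$ into $L^\infty$, Calder\'on--Zygmund for the pressure, and the equation itself for $\partial_t v$) for \eqref{eq:ape2}. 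The only cosmetic deviations --- bounding $\mu$ in $B^{-\sigma}_{4/(4-\sigma),\infty}$ via Bernstein's inequality rather than by duality, resolving the implicit Besov inequality in one step rather than the paper's rough-then-refined two-stage argument, and slightly looser bookkeeping of the powers of $(1+t)$ --- do not change the substance of the argument.
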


\begin{proof}[Proof of Proposition \ref{prop:ape3}]
  We first consider the energy type estimates of $v$.
By taking the scalar product of the equation of velocity field $v$ with $v$ itself, we get
\begin{equation*}
\begin{split}
  \frac{1}{2}\frac{\dd }{\dd t}\|v(t)\|_{L^2}^2 + \|\nabla v(t)\|_{L^2}^2 \leq \left|\int_{\R^2} \theta\, v_2(t,x) \dd x \right|
  \leq \|\theta(t)\|_{L^1(\R^2)} \|v(t)\|_{L^\infty(\R^2)}.
\end{split}
\end{equation*}
By using $L^1$-estimate \eqref{theL1es} and the interpolation inequality, we infer that
\begin{equation}\label{v-eneEs1}
  \frac{1}{2}\frac{\dd }{\dd t}\|v(t)\|_{L^2}^2 + \|\nabla v(t)\|_{L^2}^2 \leq C(1+t) \|v(t)\|_{L^2(\R^2)}^{1/2} \|\nabla^2 v\|_{L^2(\R^2)}^{1/2},
\end{equation}
where $C>0$ depends on the norms of initial data $\|\mu_0\|_{\mathcal{M}}$ and $\|\theta_0\|_{L^1(\R^2)}$.
We then consider the equation of vorticity $\omega:= \mathrm{curl} \,v = \partial_1 v_2 - \partial_2 v_1$, which reads as
\begin{equation}\label{eq:omeg}
  \partial_t \omega + v\cdot\nabla \omega - \Delta \omega = \partial_1 \theta.
\end{equation}
By taking the inner product of the above equation with $\omega$, and using the integration by parts, we derive
\begin{equation*}
  \frac{1}{2} \frac{\dd }{\dd t} \|\omega(t)\|_{L^2}^2 + \|\nabla \omega(t)\|_{L^2}^2 \leq \left|\int_{\R^2} \theta\, \partial_1\omega(t,x)\dd x \right| \leq \|\theta(t)\|_{L^2} \|\nabla \omega(t)\|_{L^2}.
\end{equation*}
Young's inequality directly leads to
\begin{equation}\label{v-eneEs2}
  \frac{\dd }{\dd t}\|\omega(t)\|_{L^2(\R^2)}^2 + \|\nabla \omega(t)\|_{L^2(\R^2)}^2 \leq \|\theta(t)\|_{L^2(\R^2)}^2.
\end{equation}
Noting that $\|\nabla^2 v\|_{L^2} \leq \|\nabla \omega\|_{L^2}$ (from formula $v=\nabla^\perp(-\Delta)^{-1}\omega$) and $ab\leq \epsilon a^4 + C_\epsilon b^{4/3} $ for any $a,b,\epsilon>0$, we combine \eqref{v-eneEs2} with \eqref{v-eneEs1} to get
\begin{equation}\label{v-eneEs3}
  \frac{\dd }{\dd t}\left(\|v(t)\|_{L^2}^2 + \|\omega(t)\|_{L^2}^2\right) + \|\nabla v(t)\|_{L^2}^2 + \frac{1}{2}\|\nabla \omega(t)\|_{L^2}^2 \leq \|\theta(t)\|_{L^2(\R^2)}^2 + C(1+t)^{4/3} \|v(t)\|_{L^2(\R^2)}^{2/3}.
\end{equation}

In order to control the norm $\|\theta(t)\|_{L^2(\R^2)}$, we next consider the equation of $\theta$. Observing that $\mu(t)\in \mathcal{M}(\R^2)= (C_0(\R^2))^*$ and also
$$
B^\sigma_{\frac{4}{\sigma},2}(\R^2)\hookrightarrow B^{\sigma/2}_{\infty,2}(\R^2) \hookrightarrow B^0_{\infty,1}(\R^2)\hookrightarrow C_0(\R^2),
$$
it leads to that $\mu(t)\in (B^\sigma_{\frac{4}{\sigma},2}(\R^2))^* = B^{-\sigma}_{\frac{4}{4-\sigma},2}(\R^2)$ for every $t\in [0,T]$ (for the dual spaces of Besov spaces, one can see e.g. \cite[Proposition 1.3.5]{Danc}).
Note that this choice of the space seems to be not optimal, however this leads to lower power index of integrability of the ground space. Thanks to this choice the estimation closes.

Applying Lemma \ref{lem:heatEq} to the equation $\partial_t \theta -\Delta \theta = - v\cdot\nabla \theta + \mu$, we infer that for every $t\in [0,T]$,
\begin{equation*}
  \|\theta\|_{L^\infty([0,t]; B^{2-\sigma}_{\frac{4}{4-\sigma},\infty}(\R^2))} \leq C_0\Big(\|\theta_0\|_{B^{2-\sigma}_{\frac{4}{4-\sigma},\infty}} + (1+t)\|\mu\|_{L^\infty_t(B^{-\sigma}_{\frac{4}{4-\sigma},\infty})}
  + (1+t) \|v\cdot\nabla \theta\|_{L^\infty_t (B^{-\sigma}_{\frac{4}{4-\sigma},\infty})}\Big).
\end{equation*}
Thanks to estimate \eqref{mu-es} and Lemma \ref{lem:prodEs}, we get
\begin{equation}\label{eq:theEs1}
\begin{split}
  \|\theta\|_{L^\infty_t(B^{2-\sigma}_{\frac{4}{4-\sigma},\infty})} \leq & \, C_0\big(\|\theta_0\|_{B^{2-\sigma}_{\frac{4}{4-\sigma},\infty}} + (1+t)\|\mu_0\|_{\mathcal{M}}\big) \\
  & + C_0 (1+t) \|(v,\omega)\|_{L^\infty_t (L^2)}  \Big(\sup_{k\geq -1} 2^{k(1-\sigma)}\sqrt{2+k} \|\Delta_k\theta\|_{L^\infty_t (L^{\frac{4}{4-\sigma}})}\Big),
\end{split}
\end{equation}
where the usual abbreviation $\|(v,\omega)\|_{L^\infty_t (L^2)} := \|v\|_{L^\infty_t (L^2)} + \|\omega\|_{L^\infty_t (L^2)}$ has been adopted.

We first derive a rough estimate of $\|\theta\|_{L^\infty_t(B^{2-\sigma}_{\frac{4}{4-\sigma},\infty})}$ in terms of $\|(v,\omega)\|_{L^\infty_t (L^2)} $.
By using the interpolation inequality and Young's inequality, it follows that
\begin{align}\label{eq:theEst}
  &\|\theta\|_{L^\infty_t(B^{2-\sigma}_{\frac{4}{4-\sigma},\infty})}
  \leq C_0 \Big(\|\theta_0\|_{B^{2-\sigma}_{\frac{4}{4-\sigma},\infty}} + (1+t) \|\mu_0\|_{\mathcal{M}} +  (1+t) \|(v,\omega)\|_{L^\infty_t (L^2)} \|\theta\|_{L^\infty_t (B^{1-\frac{\sigma}{2}}_{\frac{4}{4-\sigma},\infty})}\Big) \nonumber\\
  & \leq C_0\Big(\|\theta_0\|_{B^{2-\sigma}_{\frac{4}{4-\sigma},\infty}} + (1+t) \|\mu_0\|_{\mathcal{M}}
  + (1+t) \|(v,\omega)\|_{L^\infty_t (L^2)}\|\theta\|_{L^\infty_t (L^1)}^{\frac{2-\sigma}{4-\sigma}} \|\theta\|_{L^\infty_t (B^{2-\sigma}_{\frac{4}{4-\sigma},\infty})}^{\frac{2}{4-\sigma}}\Big)\nonumber \\
  & \leq  C_0\Big(\|\theta_0\|_{B^{2-\sigma}_{\frac{4}{4-\sigma},\infty}} + (1+t) \|\mu_0\|_{\mathcal{M}}
  + (1+t)^{\frac{4-\sigma}{2-\sigma}} \|(v,\omega)\|_{L^\infty_t (L^2)}^{\frac{4-\sigma}{2-\sigma}} \|\theta\|_{L^\infty_t (L^1)}\Big)+ \frac{1}{2}\|\theta\|_{L^\infty_t (B^{2-\sigma}_{\frac{4}{4-\sigma},\infty})},
\end{align}
thus using estimate \eqref{theL1es} leads to
\begin{equation}\label{theEsBesov}
  \|\theta\|_{L^\infty_t(B^{2-\sigma}_{\frac{4}{4-\sigma},\infty})} \leq C(1+t)^{\frac{6-2\sigma}{2-\sigma}} \Big(1+ \|(v,\omega)\|_{L^\infty_t (L^2)}^{\frac{4-\sigma}{2-\sigma}}\Big),
\end{equation}
with $C$ depending on the norms $\|\theta_0\|_{L^1\cap B^{2-\sigma}_{\frac{4}{4-\sigma},\infty}}$ and $\|\mu_0\|_{\mathcal{M}}$.

Then we show a more refined estimate of \eqref{theEsBesov} by slightly reducing the power index of $\|(v,\omega)\|_{L^\infty_t L^2}$.
Through applying the interpolation inequality \eqref{eq:intp}, $L^1$-estimate \eqref{theL1es} and the fact that the function $z\mapsto z^{\frac{2}{4-\sigma}} \sqrt{\log(e+ \frac{1}{z})}$
is increasing on $]0,\infty[$, we find
\begin{align}\label{eq:theEs2}
  &\|\theta\|_{L^\infty_t(B^{2-\sigma}_{\frac{4}{4-\sigma},\infty})}
  \leq\, C_0\Big(\|\theta_0\|_{B^{2-\sigma}_{\frac{4}{4-\sigma},\infty}} + (1+t)\|\mu_0\|_{\mathcal{M}}\Big) \nonumber\\
  & + C (1+t) \|(v,\omega)\|_{L^\infty_t (L^2)}  \bigg(\|\theta\|_{L^\infty_t (L^1)}^{\frac{2}{4-\sigma}} \|\theta\|_{L^\infty_t (B^{2-\sigma}_{\frac{4}{4-\sigma},\infty})}^{\frac{2-\sigma}{4-\sigma}}
  \sqrt{\log\Big(e + \frac{\|\theta\|_{L^\infty_t (B^{2-\sigma}_{4/(4-\sigma),\infty})}}{\|\theta\|_{L^\infty_t(L^1)}}\Big)} + \|\theta\|_{L^\infty_t (L^1)}\bigg) \nonumber\\
  & \leq C (1+t) + C (1+t)^2 \|(v,\omega)\|_{L^\infty_t (L^2)}
  \bigg(\|\theta\|_{L^\infty_t (B^{2-\sigma}_{\frac{4}{4-\sigma},\infty})}^{\frac{2-\sigma}{4-\sigma}}
  \sqrt{\log\Big(e + \|\theta\|_{L^\infty_t (B^{2-\sigma}_{\frac{4}{4-\sigma},\infty})}\Big)} + 1 \bigg),
\end{align}
where $C$ depends on the norms of initial data.
By virtue of estimate \eqref{theEsBesov}, we also see that
\begin{align*}
  \sqrt{\log\Big(e + \|\theta\|_{L^\infty_t (B^{2-\sigma}_{\frac{4}{4-\sigma},\infty})}\Big)}
  & \leq \sqrt{\log \Big( \big(e+  C(1+t)^{\frac{6-2\sigma}{2-\sigma}}\big)
  \Big(e + \|(v,\omega)\|_{L^\infty_t (L^2)}^{\frac{4-\sigma}{2-\sigma}}\Big)\Big)} \\
  & \leq \sqrt{1+\log\Big(e+  C(1+t)^{\frac{6-2\sigma}{2-\sigma}} \Big) }
  \sqrt{\log\Big(e + \|(v,\omega)\|_{L^\infty_t (L^2)}^{\frac{4-\sigma}{2-\sigma}}\Big)} \\
  & \leq C (1+t) \sqrt{\log\Big(e + \|(v,\omega)\|_{L^\infty_t (L^2)}^2\Big)},
\end{align*}
thus inserting this inequality into \eqref{eq:theEs2} leads to that
\begin{equation*}
  \|\theta\|_{L^\infty_t(B^{2-\sigma}_{\frac{4}{4-\sigma},\infty})}
  \leq C (1+t) + C (1+t)^3 \|(v,\omega)\|_{L^\infty_t (L^2)}
  \bigg(\sqrt{\log\Big(e + \|(v,\omega)\|_{L^\infty_t (L^2)}^2\Big)}\|\theta\|_{L^\infty_t (B^{2-\sigma}_{\frac{4}{4-\sigma},\infty})}^{\frac{2-\sigma}{4-\sigma}} +1\bigg).
\end{equation*}
By arguing as \eqref{eq:theEst} and \eqref{theEsBesov}, we obtain
\begin{equation}\label{theEsBesov2}
  \|\theta\|_{L^\infty_t(B^{2-\sigma}_{\frac{4}{4-\sigma},\infty})} \leq
  C (1+t)^{\frac{3(4-\sigma)}{2}}\Big(1+ \|(v,\omega)\|_{L^\infty_t (L^2)}^{\frac{4-\sigma}{2}}\Big) \Big(\log\Big(e + \|(v,\omega)\|_{L^\infty_t (L^2)}^2\Big)\Big)^{\frac{4-\sigma}{4}}.
\end{equation}

Now we go back to inequality \eqref{v-eneEs3}. By using the interpolation inequality, estimates \eqref{theL1es} and \eqref{theEsBesov2}, we deduce that
\begin{equation*}
\begin{split}
  \frac{\dd }{\dd t} \|(v,\omega)(t)\|_{L^2}^2 + \frac{1}{2}\|(\nabla v,\nabla \omega)(t)\|_{L^2}^2
  & \leq C \|\theta\|_{L^\infty_t L^1}^{\frac{2(2-\sigma)}{4-\sigma}} \|\theta\|_{L^\infty_t B^{2-\sigma}_{\frac{4}{4-\sigma},\infty}}^{\frac{4}{4-\sigma}} + C(1+t)^{\frac{4}{3}} \|v(t)\|_{L^2(\R^2)}^{\frac{2}{3}} \\
  & \leq C (1+ t)^7 \bigg(1+ \|(v,\omega)\|_{L^\infty_t (L^2)}^2 \log\Big(e + \|(v,\omega)\|_{L^\infty_t (L^2)}^2\Big)\bigg).
\end{split}
\end{equation*}
Integrating on the time variable yields that for every $t\in [0,T]$,
\begin{equation}\label{eq:vEs-key0}
\begin{split}
   & \|(v,\omega)\|_{L^\infty_t(L^2)}^2 + \|(\nabla v,\nabla \omega)\|_{L^2_t (L^2)}^2 \\
   \leq & \,C_0\|(v_0,\omega_0)\|_{L^2}^2 + C (1+ t)^8
   + C \int_0^t (1+ \tau)^7 \|(v,\omega)\|_{L^\infty_\tau (L^2)}^2 \log\Big(e + \|(v,\omega)\|_{L^\infty_\tau (L^2)}^2\Big) \dd \tau.
\end{split}
\end{equation}
Gr\"onwall's inequality guarantees that
\begin{equation}\label{eq:vEs-key}
  \|(v,\omega)\|_{L^\infty_T(L^2)}^2 + \|(\nabla v,\nabla \omega)\|_{L^2_T (L^2)}^2
  \leq C e^{\exp(C(1+T)^8)},
\end{equation}
where $C$ depends on the norms $\|v_0\|_{H^1}$, $\|\theta_0\|_{L^1\cap B^{2-\sigma}_{\frac{4}{4-\sigma},\infty}}$ and $\|\mu_0\|_{\mathcal{M}}$.
Plugging the above estimate into \eqref{theEsBesov} leads to
\begin{equation}\label{theEsBesov3}
  \|\theta\|_{L^\infty_t(B^{2-\sigma}_{\frac{4}{4-\sigma},\infty})} \leq C e^{\exp(C(1+T)^8)},
\end{equation}
which combined with \eqref{eq:vEs-key} and the facts $\|\nabla v\|_{L^2}\leq \|\omega\|_{L^2}$ and $\|\nabla^2 v\|_{L^2}\leq \|\nabla \omega\|_{L^2}$
implies the desired estimate \eqref{eq:ape1}.

Next we turn to the proof of estimate \eqref{eq:ape2}. By viewing the equation of $\omega$ \eqref{eq:omeg} as a heat equation with forcing, we use estimates \eqref{eq:heat-ape} and \eqref{eq:prodEs2} to get
\begin{align*}
  \|\omega\|_{L^\infty_T (B^{3-\sigma}_{\frac{4}{4-\sigma},\infty})} \leq\, C_0 \Big(\|\omega_0\|_{B^{3-\sigma}_{\frac{4}{4-\sigma},\infty}(\R^2)} + (1+T) \|\partial_1\theta\|_{L^\infty_T (B^{1-\sigma}_{\frac{4}{4-\sigma},\infty})}
  + (1+T) \|v\cdot \nabla \omega\|_{L^\infty_T (B^{1-\sigma}_{\frac{4}{4-\sigma},\infty})}\Big) & \\
  \leq\, C_0 \Big(\|\omega_0\|_{B^{3-\sigma}_{\frac{4}{4-\sigma},\infty}} + (1+T) \|\theta\|_{L^\infty_T (B^{2-\sigma}_{\frac{4}{4-\sigma},\infty})}
  + (1+T) \|v\|_{L^\infty_T (B^{1-\sigma}_{\frac{8}{4-\sigma},\infty})} \|\omega\|_{L^\infty_T (B^{2-\sigma}_{\frac{8}{4-\sigma},\infty})} \Big) .&
\end{align*}
In view of estimates \eqref{eq:vEs-key}-\eqref{theEsBesov3}, the continuous embedding $H^1(\R^2)\hookrightarrow B^{1-\sigma}_{\frac{8}{4-\sigma},\infty}(\R^2)$, the interpolation inequality $\|g\|_{B^{2-\sigma}_{\frac{8}{4-\sigma},\infty}(\R^2)}\leq C \|g\|_{L^2(\R^2)}^{\frac{\sigma}{8-2\sigma}}
\|g\|_{B^{3-\sigma}_{\frac{4}{4-\sigma},\infty}(\R^2)}^{\frac{8-3\sigma}{8-2\sigma}}$ and Young's inequality, we infer that
\begin{align*}
  \|\omega\|_{L^\infty_T (B^{3-\sigma}_{\frac{4}{4-\sigma},\infty}(\R^2))} & \leq\, C e^{\exp(C(1+T)^8)} +C_0 (1+T) \|v\|_{L^\infty_T (H^1)} \|\omega\|_{L^\infty_T (L^2)}^{\frac{\sigma}{8-2\sigma}}
  \|\omega\|_{L^\infty_T (B^{3-\sigma}_{\frac{4}{4-\sigma},\infty}(\R^2))}^{\frac{8-3\sigma}{8-2\sigma}} \\
  & \leq C e^{\exp(C(1+T)^8)} + \big(C_0(1+T) \|v\|_{L^\infty_T (H^1)} \big)^{\frac{8-2\sigma}{\sigma}}\|\omega\|_{L^\infty_T (L^2)} + \frac{1}{2}\|\omega\|_{L^\infty_T (B^{3-\sigma}_{\frac{4}{4-\sigma},\infty}(\R^2))} \\
  & \leq C e^{\exp(C(1+T)^8)} + \frac{1}{2}\|\omega\|_{L^\infty_T (B^{3-\sigma}_{\frac{4}{4-\sigma},\infty}(\R^2))},
\end{align*}
thus the Calder\'on-Zygmund theorem implies
\begin{equation}\label{eq:omeEsBesov}
  \|\nabla v\|_{L^\infty_T (B^{3-\sigma}_{\frac{4}{4-\sigma},\infty}(\R^2))}\leq C \|\omega\|_{L^\infty_T (B^{3-\sigma}_{\frac{4}{4-\sigma},\infty}(\R^2))} \leq C e^{\exp(C(1+T)^8)},
\end{equation}
where $C$ depends on the norms $\|v_0\|_{H^1}$, $\|\omega_0\|_{B^{3-\sigma}_{\frac{4}{4-\sigma},\infty}}$, $\|\theta_0\|_{L^1\cap B^{2-\sigma}_{\frac{4}{4-\sigma},\infty}}$ and $\|\mu_0\|_{\mathcal{M}}$.
Besides, by virtue of the high-low frequency decomposition and Bernstein's inequality, we have
\begin{equation}\label{eq:vLipEs}
\begin{split}
  \|v\|_{L^\infty_T (W^{1,\infty}(\R^2))} & \leq C_0 \|\Delta_{-1}v\|_{L^\infty(\R^2)} + C_0 \sum_{q\in \N}  \|\Delta_q \nabla v\|_{L^\infty(\R^2)} \\
  & \leq C_0 \|v\|_{L^\infty_T (L^2(\R^2))} + C_0 \sum_{q\in \N} 2^{q(\frac{\sigma}{2}-1)} 2^{q(3-\sigma)}\|\Delta_q \nabla v\|_{L^\infty_T (L^{\frac{4}{4-\sigma}}(\R^2))} \\
  & \leq C_0 \|v\|_{L^\infty_T (L^2(\R^2))} + C \Big(\sum_{q\in \N} 2^{q(\frac{\sigma}{2}-1)}\Big) \|\omega\|_{L^\infty_T (B^{3-\sigma}_{\frac{4}{4-\sigma},\infty}(\R^2))} \\
  & \leq C e^{\exp(C(1+T)^8)},
\end{split}
\end{equation}
as desired. Here we see the straightforward proof of the continuous embedding $B^{3-\sigma}_{\frac{4}{4-\sigma},\infty}(\R^2)
\hookrightarrow L^\infty(\R^2)$
for $\sigma \in ]0,2[$ in order to keep $\frac{\sigma}{2} - 1 <0$.

By using the third equation of system \eqref{BoussEq}, the divergence-free condition of $v$ and the above \textit{a priori} estimates on $v,\theta$, we see that
\begin{equation*}
  \nabla p = \nabla (-\Delta)^{-1} \divg(v\cdot\nabla v) - \nabla \partial_1 (-\Delta)^{-1}\theta,
\end{equation*}
which combined with the Calder\'on-Zygmund theorem and inequality \eqref{eq:prodEs3} leads to
\begin{align}
  \|\nabla p\|_{L^\infty_T(B^{2-\sigma}_{\frac{4}{4-\sigma},\infty}(\R^2))} \leq\, & C \|v\cdot\nabla v\|_{L^\infty_T(B^{2-\sigma}_{\frac{4}{4-\sigma},\infty})}
  + C \|\theta\|_{L^\infty_T (B^{2-\sigma}_{\frac{4}{4-\sigma},\infty})} \nonumber \\
  \leq\, & C \|v\|_{L^\infty_T (W^{1,\infty})} \Big(\|\nabla v\|_{L^\infty_T (B^{2-\sigma}_{\frac{4}{4-\sigma},\infty})} + \|v\|_{L^\infty_T (B^{2-\sigma}_{\frac{4}{4-\sigma},\infty})} \Big)
  + C \|\theta\|_{L^\infty_T (B^{2-\sigma}_{\frac{4}{4-\sigma},\infty})} \nonumber \\
  \leq\, & C e^{\exp(C(1+T)^8)}.
\end{align}
Furthermore, from the third equation of system \eqref{BoussEq}, we derive
\begin{align}
  \|\partial_t v\|_{L^\infty_T (B^{2-\sigma}_{\frac{4}{4-\sigma},\infty}(\R^2))} & \leq \|v\cdot\nabla v \|_{L^\infty_T (B^{2-\sigma}_{\frac{4}{4-\sigma},\infty})}
  + \|\Delta v\|_{L^\infty_T (B^{2-\sigma}_{\frac{4}{4-\sigma},\infty})} + \|\nabla p\|_{L^\infty_T (B^{2-\sigma}_{\frac{4}{4-\sigma},\infty})}
  +  \|\theta\|_{L^\infty_T (B^{2-\sigma}_{\frac{4}{4-\sigma},\infty})} \nonumber \\
  & \leq C e^{\exp(C(1+T)^8)} + C \|\nabla v\|_{L^\infty_T (B^{3-\sigma}_{\frac{4}{4-\sigma},\infty})} \leq C e^{\exp(C(1+T)^8)}.
\end{align}

\end{proof}

\subsection{Global existence}\label{subsec:exe}
The issue of existence for our system is not immediate since $\mu=\mu(x,t)$ is
just a measure. In order to construct a suitable approximation we
consider the system with smooth initial data. One can start with
an initial sequence
\begin{equation}\label{g1}
 \mu^{(n)}|_{t=0}= \phi^n*\mu_0,\quad \theta^{(n)}|_{t=0}= \phi^n*\theta_0,\quad v^{(n)}|_{t=0}=\phi^n*v_0,
 \end{equation}
where
$n\in\N^+$ is an approximation parameter ($n\to \infty$ in the end) and $\phi^n(x) = n^2 \phi(nx)$ with $\phi\in \mathcal{D}(\R^2)$ a standard mollifier function.

To show the existence of system \eqref{BoussEq} with such initial data \eqref{g1}, we
want to use a standard approach via Galerkin method.
An approximation we build on the following spaces:

\smallskip

$\ast$ \ $H^2(\R^2)$ for the velocity field in the divergence-free subset

\smallskip
and
\smallskip

$\ast$ \ $H^1(\R^2)$ for the temperature.

\smallskip

%
In short, $v^{(n),N}$ and $\theta^{(n),N}$ are approximations based on the
$N$-dimensional restriction of $H^2$ and $H^1$ spaces.
We have
\begin{equation}
 v^{(n),N}=\sum_{k=1}^N V^{(n),N}_k(t)w_k(x), \qquad
 \theta^{(n),N}=\sum_{k=1}^N \Theta_k^{(n),N}(t) g_k(x).
\end{equation}
Vectors $w_k$ and $g_l$ are the based vectors of $H^2(\R^2;\R^2)$
of the divergence-free subspace and $H^1(\R^2;\R)$, respectively.
The sought functions $V^{(n),N}_k(t)$ and $\Theta_k^{(n),N}(t)$ are derived by solving of
the following ODEs
\begin{equation}\label{appSys-ODE}
 \begin{array}{l}
  (\theta^{(n),N}_t,\psi^N) + (v^{(n),N} \cdot\nabla \theta^{(n),N}, \psi^N)
  +(\nabla \theta^{(n),N},\nabla \psi^N)=(\mu^{(n),N},\psi^N), \\[10pt]
  (v^{(n),N}_t,\Psi^N)+ (v^{(n),N} \cdot\nabla v^{(n),N},\Psi^N)+
  (\nabla v^{(n),N},\nabla \Psi^N)=(\theta^{(n),N} e_2,\Psi^N), \\[10pt]
  V^{(n),N}_k(0) = (\phi^n*v_0, w_k),\quad \Theta^{(n),N}(0)= (\phi^n *\theta_0, g_k),\quad k=1,\cdots,N,
 \end{array}
\end{equation}
for all $\Psi^N \in {\rm span}\{w_1,...,w_N\} \subset H^2(\R^2;\R^2)$
with $\divg \Psi^N=0$, and $\psi^N \in {\rm span}\{g_1,...,g_N\} \subset
H^1(\R^2;\R)$.
And $\mu^{(n),N}$ is the classical solution to the transport equation
\begin{equation}
 \mu^{(n),N}_t + v^{(n),N} \cdot \nabla \mu^{(n),N}=0,\quad \mu^{(n),N}|_{t=0}=  \phi^n*\mu_0.
\end{equation}

The local in time existence for the system is clear, and in order to pass to the limit with $N$ we need just the \textit{a priori} estimate in suitable
energy norms independent of $N$, which of course depends on $T$ but never blows up for any finite $T$.

Note that the condition $\divg v^{(n),N}=0$ leads to the following bound uniformly in $N$:
\begin{equation}
 \mu^{(n),N} \in L^\infty(0,T;L^2(\R^2)), \mbox{ \ \ indeed \ } \mu^{(n),N} \in L^\infty(0,T;L^1\cap L^\infty(\R^2)),
\end{equation}
since by definition $\|\phi_n*\mu_0 \|_{L^1(\R^2)}\leq C_0 \|\mu_0\|_{\mathcal{M}(\R^2)}$ (uniformly in $n$) and from Young's inequality $\phi_n*\mu_0 \in L^1\cap L^\infty(\R^2)$ for every $n\in\N^+$.
Hence testing the first equation by $\theta^{(n),N}$ in \eqref{appSys-ODE} we get
\begin{equation}
\theta^{(n),N} \in L^\infty(0,T; L^2(\R^2)) \cap L^2(0,T;H^1(\R^2)),\quad \textrm{uniformly in}\;N.
\end{equation}
Then testing the second equation in \eqref{appSys-ODE} by $\Delta v^{(n),N}$, and using the structure of the two dimensional Navier-Stokes Equations we get
\begin{equation}
 v^{(n),N} \in L^\infty(0,T;H^1(\R^2)) \cap L^2(0,T;H^2(\R^2)) \cap  H^1(0,T;L^2(\R^2)),\quad \textrm{uniformly in}\;N.
\end{equation}
The above information guarantees us strong convergence of $(\mu^{(n),N},\theta^{(n),N},v^{(n),N})$ locally in space as $N\rightarrow \infty$.
Hence there is no problem to pass to the limit $N\rightarrow \infty$ and we get the solution to the system (\ref{BoussEq}) with initial data given by (\ref{g1}), i.e.:
\begin{equation}\label{appBEq}
\begin{array}{ll}
  \partial_t \mu^{(n)} + v^{(n)}\cdot \nabla \mu^{(n)} =0,
  & \mbox{in \ } \R^2 \times ]0,T],\\
  \partial_t \theta^{(n)} + v^{(n)}\cdot\nabla \theta^{(n)} -\Delta \theta^{(n)} = \mu^{(n)},
  & \mbox{in \ } \R^2 \times ]0,T], \\
  \partial_t v^{(n)} + v^{(n)} \cdot \nabla v^{(n)} -\Delta v^{(n)} + \nabla p^{(n)} = \theta^{(n)},
  & \mbox{in \ } \R^2 \times ]0,T], \\
  \mathrm{div}\, v^{(n)}=0, & \mbox{in \ } \R^2 \times ]0,T], \\
  \mu^{(n)}|_{t=0}= \phi^n*\mu_0,\quad \theta^{(n)}|_{t=0}= \phi^n*\theta_0,\quad v^{(n)}|_{t=0}=\phi^n*v_0.
\end{array}
\end{equation}
Using the standard bootstap method (here we use just the simple structure of quasi-linear systems) we obtain that for every $n\in\N^+$ and for any $1<q,p<\infty$,
\begin{equation}\label{app-higReg}
 \begin{array}{l}
  \mu^{(n)} \in L^\infty(0,T;L^1\cap L^\infty(\R^2)),\\
  \theta^{(n)} \in L^q(0,T;W^{2,p})\cap W^{1,q}(0,T;L^p(\R^2)),\\
  v^{(n)} \in L^q(0,T;W^{4,p}(\R^2) \cap W^{2,q}(0,T;L^p(\R^2)).
 \end{array}
\end{equation}
Higher regularity of the approximative sequence ensures that they satisfy the a priori estimates from subsection \ref{subsec:apE}. Hence we have
\begin{equation}\label{mu-apEs}
  \textrm{$\|\mu^{(n)}\|_{L^\infty(0,T; \mathcal{M}(\R^2))} \leq \|\mu_0\|_{\mathcal{M}(\R^2)}$,\quad $\mu^{(n)}\geq 0$,\quad and\quad $\mathrm{supp}\,\mu^{(n)}\subset B_{R_0 +C}(0)$.}
\end{equation}
and
\begin{equation}\label{the-v-apEs0}
  \|\theta^{(n)}\|_{L^\infty_T (L^1\cap B^{2-\sigma}_{\frac{4}{4-\sigma},\infty})}+
  \|v^{(n)}\|_{L^\infty_T (H^1\cap W^{1,\infty})}
  +
  \|\nabla v^{(n)}\|_{L^\infty_T (B^{3-\sigma}_{\frac{4}{4-\sigma},\infty})} + \|(\partial_t v^{(n)},\nabla p^{(n)})\|_{L^\infty_T (B^{2-\sigma}_{\frac{4}{4-\sigma},\infty})}\leq C,
\end{equation}
where $C$ is depending on $T$ and norms of initial data $(\mu_0,\theta_0,v_0)$ but independent of $n\in\N^+$.

Now we  analyse a possible limit of the sequence as $n\to \infty$. For $v^{(n)}$ and $\theta^{(n)}$, from \eqref{the-v-apEs0} and based on the standard compactness
argument for the Besov/Soblev spaces, we find a subsequence with strong (point-wise) convergence to some functions $v$ and $\theta$, more precisely, one has that for every $\varphi \in \mathcal{D}(\R^2)$,
\begin{equation}
\begin{split}
  &\varphi v^{(n)}\rightarrow \varphi v,\quad \textrm{in   }L^\infty(0,T; L^2\cap W^{1,\infty}(\R^2)), \\
  &\varphi \theta^{(n)} \rightarrow \varphi \theta,\quad \textrm{in   } L^\infty(0,T;L^2(\R^2)).
\end{split}
\end{equation}
For $\mu^{(n)}$, we view it as a mapping from $[0,T]$ to the metric space $(\mathcal{M}_+,d)$, and we show that $\mu^{(n)}$ has a strong convergence by using the Arzela-Ascoli theorem.
The uniform boundedness and relative compactness of $\mu^{(n)}(t)$ are followed from \eqref{mu-apEs} and Proposition \ref{pro:mu-conv}, and for the equicontinuity property of $\mu^{(n)}(t)$,
we observe that for every $s_1,s_2\in [0,T]$ and every $\pi\in W^{1,\infty}(\R^2)$,
\begin{align}\label{equi}
  \left|\int_{\R^2} \big(\mu^{(n)}(s_2) -\mu^{(n)}(s_1)\big) \pi \dd x\right| & = \left|\int_{s_1}^{s_2}\int_{\R^2} v^{(n)} \mu^{(n)} \nabla \pi \dd x \dd t \right|  \leq C Lip(\pi) \|\mu_0\|_{\mathcal{M}} |s_2-s_1|,
\end{align}
so that
\begin{equation*}
  d(\mu^{(n)}(s_2),\mu^{(n)}(s_1)) \leq C |s_2-s_1|.
\end{equation*}
Hence the assumptions of Arzela-Ascoli theorem are satisfied and there exists $\mu\in L^\infty(0,T;\mathcal{M}_+)$ such that, up to a subsequence,
\begin{equation}\label{mu-converg}
  \mu^{(n)}(t) \to \mu(t) \mbox{ in }\mbox{$d$-topology uniformly in time.}
\end{equation}
The above procedure is viewed as standard in the transport theory, for details we refer e.g. for \cite{MP18}.

The information \eqref{mu-apEs} also implies $\mu\in L^\infty(0,T; \mathcal{M}_+(\R^2))$ and $\mathrm{supp}\,\mu\subset B_{R_0 +C}(0)$.
In addition, in view of definition \ref{d-topology}, the bound \eqref{mu-converg} and strong convergence of the velocity in $L^\infty(0,T;W^{1,\infty}_{\mathrm{loc}}(\R^2))$ guarantee that as $n\rightarrow \infty$ we have
\begin{equation}
 v^{(n)} \mu^{(n)} \to v\mu\quad  \mbox{ in  } \mathcal{D}'(\R^2 \times [0,T]).
\end{equation}
We thus have the existence.



\subsection{Uniqueness}\label{subsec:uniq}

Consider two solutions $(\mu_1,\theta_1,v_1,p_1)$ and $(\mu_2,\theta_2,v_2,p_2)$ to the Boussinesq type system \eqref{BoussEq}
starting from the same initial data $(\mu_0,\theta_0,v_0)$ as stated in Theorem \ref{thm:2Dgwp}.
According to Proposition \ref{prop:ape3}, we know that for $i=1,2$ and for any $T>0$ large,
\begin{equation}\label{the-v-apEs}
  \|\theta_i\|_{L^\infty_T (L^1\cap B^{2-\sigma}_{\frac{4}{4-\sigma},\infty})}<\infty,\quad \|v_i\|_{L^\infty_T (H^1\cap W^{1,\infty})}
  + \|\nabla v_i\|_{L^\infty_T (B^{3-\sigma}_{\frac{4}{4-\sigma},\infty})} + \|(\partial_t v_i,\nabla p_i)\|_{L^\infty_T (B^{2-\sigma}_{\frac{4}{4-\sigma},\infty})}<\infty.
\end{equation}
Denoting by $\bar{v}_i(t,y)= v_i(t, X_{v_i}(t,y))$ with $X_{v_i}(t,y)$ the particle-trajectory generated by $v_i$ (see \eqref{flow}), and letting $T'>0$ be small enough, we have
\begin{equation}\label{bar-v-Lip1}
\begin{split}
  \int_0^{T'} \|\nabla_y \bar{v}_i(t, y)\|_{L^\infty_y}\dd t & \leq \int_0^{T'} \|\nabla_x v_i(t,x)\|_{L^\infty_x} \|\nabla_y X_{v_i}(t,y)\|_{L^\infty_y} \dd t \\
  & \leq \int_0^{T'} \|\nabla_x v_i(t)\|_{L^\infty} e^{\int_0^t \|\nabla_x v_i(\tau)\|_{L^\infty}\dd \tau} \dd t \\
  & \leq T' \|\nabla v_i\|_{L^\infty_T L^\infty} e^{T' \|\nabla v_i\|_{L^\infty_T L^\infty}} \leq c_0,
\end{split}
\end{equation}
where $0<c_0\leq \frac{1}{2}$ is a fixed constant chosen later.
By adopting the notations introduced in subsection \ref{subsec:lag} and using \eqref{L-BoussEq2}, the system of $(\mu_i,\theta_i,v_i,p_i)$ ($i=1,2$) in the Lagrangian coordinates is written as
\begin{equation}\label{L-BEq3}
\begin{cases}
  \partial_t \bar{\theta}_i - \divg\left( A_{v_i} A_{v_i}^{\textrm{T}} \nabla_y \bar{\theta}_i\right)  = \mu_0, \\
  \partial_t \bar{v}_i - \divg\left( A_{v_i} A_{v_i}^{\textrm{T}} \nabla_y \bar{v}_i\right)  + A_{v_i}^{\textrm{T}}\nabla_y \overline{p}_i = \bar{\theta}_i \,e_2, \\
  \divg_y\left( A_{v_i} \bar{v}_i \right) =0, \\
  \bar{\theta}_i|_{t=0}= \theta_0,\quad \bar{v}_i|_{t=0}=v_0.
\end{cases}
\end{equation}
The choice of the Lagrangian coordinates setting removes the problem with uniqueness for measure force $\mu$. They are given explicitly as follows
\begin{equation}
 \mu_1(t,X_{v_1}(t,y))=\mu_2(t,X_{v_2}(t,y))=\mu_0(y).
\end{equation}

We see the difference equations of $\bar{\theta}_1-\bar{\theta}_2=: \delta \bar \theta$ and $\bar{v}_1 -\bar{v}_2=: \delta \bar{v}$ read as follows
\begin{equation}\label{L-BEq-del}
\begin{cases}
  \partial_t \delta \bar{\theta} -\divg\big( A_{v_1} A_{v_1}^{\textrm{T}} \nabla \delta \bar{\theta}\big)  =
  \divg\left( (A_{v_1} A_{v_1}^{\textrm{T}} -  A_{v_2}A_{v_2}^{\textrm{T}})\nabla  \bar{\theta}_2 \right), \\
  \partial_t \delta \bar{v} -\divg\big( A_{v_1} A_{v_1}^{\textrm{T}} \nabla \delta \bar{v}\big) + A_{v_1}^{\textrm{T}}\nabla \delta \overline{p} =
  (\delta \bar{\theta}) e_2 + \divg\left( (A_{v_1} A_{v_1}^{\textrm{T}} -
  A_{v_2}A_{v_2}^{\textrm{T}})\nabla  \bar{v}_2 \right)-(A_{v_1}^{\textrm{T}}-A_{v_2}^{\textrm{T}})\nabla  \overline{p}_2 , \\
  \divg \big( A_{v_1}\delta\bar{v}\big) = \divg\big((A_{v_1}-A_{v_2})\bar{v}_2\big), \\
  \delta \bar{\theta}|_{t=0}=0,\quad \delta \bar{v}|_{t=0}=0,
\end{cases}
\end{equation}
where $\delta \bar p :=\bar p_1 - \bar p_2$.
We rewrite this system as
\begin{equation}\label{L-BEq-del2}
\begin{cases}
  \partial_t \delta \bar{\theta} - \Delta \delta \bar{\theta} =
  \divg\left( (A_{v_1} A_{v_1}^{\textrm{T}} -  A_{v_2}A_{v_2}^{\textrm{T}})\nabla  \bar{\theta}_2 \right) - \divg\big( (\mathrm{Id}-A_{v_1} A_{v_1}^{\textrm{T}})\nabla \delta \bar{\theta}\big) , \\
  \partial_t \delta \bar{v} - \Delta \delta \bar{v}  +  \nabla \delta \overline{p} =
  (\delta \bar{\theta}) e_2 + \divg\left( (A_{v_1} A_{v_1}^{\textrm{T}} -
  A_{v_2}A_{v_2}^{\textrm{T}})\nabla  \bar{v}_2 \right)-(A_{v_1}^{\textrm{T}}-A_{v_2}^{\textrm{T}})\nabla  \overline{p}_2  \\
  \qquad \qquad \qquad \qquad\quad - \divg\big((\mathrm{Id}- A_{v_1} A_{v_1}^{\textrm{T}}) \nabla \delta \bar{v}\big) + \big(\mathrm{Id}-A_{v_1}^{\textrm{T}}\big)\nabla \delta \overline{p}, \\
  \divg \delta\bar{v} = \divg\big((A_{v_1}-A_{v_2})\bar{v}_2\big) + \divg\big((\mathrm{Id}-A_{v_1})\delta\bar{v}\big), \\
  \delta \bar{\theta}|_{t=0}=0,\quad \delta \bar{v}|_{t=0}=0.
\end{cases}
\end{equation}
Denoting by $\Lambda^{-1}:=(-\Delta)^{-\frac{1}{2}}$ and $\mathcal{R}:=\nabla \Lambda^{-1}$, we start from the first equation of system \eqref{L-BEq-del2} to get
\begin{align}\label{del-the-es1}
  & \frac{1}{2} \frac{\dd }{\dd t} \|\Lambda^{-1}\delta\bar{\theta}(t)\|_{L^2(\R^2)}^2  + \|\delta \bar{\theta}(t)\|_{L^2(\R^2)}^2 \nonumber \\
  \leq  & \left|\int_{\R^2} \Lambda^{-1}\left( (A_{v_1} A_{v_1}^{\textrm{T}} -  A_{v_2}A_{v_2}^{\textrm{T}})\nabla  \bar{\theta}_2 \right)\cdot \mathcal{R}(\delta\bar{\theta}) \dd x\right|
  + \left|\int_{\R^2} \Lambda^{-1}\big( (\mathrm{Id}-A_{v_1} A_{v_1}^{\textrm{T}})\nabla \delta \bar{\theta}\big) \cdot \mathcal{R}(\delta\bar{\theta}) \dd x \right| \nonumber \\
  \leq & \left|\int_{\R^2}\left( (A_{v_1} A_{v_1}^{\textrm{T}} -  A_{v_2}A_{v_2}^{\textrm{T}}) \,\bar{\theta}_2 \right)
  \cdot \mathcal{R}\otimes \mathcal{R}(\delta\bar{\theta}) \dd x\right|
  + \left|\int_{\R^2}\left( \nabla(A_{v_1} A_{v_1}^{\textrm{T}} -  A_{v_2}A_{v_2}^{\textrm{T}})  \;\bar{\theta}_2 \right)\cdot \mathcal{R}\Lambda^{-1}(\delta\bar{\theta}) \dd x\right| \nonumber \\
  & + \left|\int_{\R^2}\big( (\mathrm{Id}-A_{v_1} A_{v_1}^{\textrm{T}}) \,\delta \bar{\theta}\big) \cdot
  \mathcal{R}\otimes \mathcal{R}(\delta\bar{\theta}) \dd x \right|
  + \left|\int_{\R^2}\big( \nabla(\mathrm{Id}-A_{v_1} A_{v_1}^{\textrm{T}})\; \delta \bar{\theta}\big) \cdot
  \mathcal{R}\Lambda^{-1}(\delta\bar{\theta}) \dd x \right| \nonumber \\
  := &\, \mathrm{I}_1 + \mathrm{I}_2 + \mathrm{I}_3 + \mathrm{I}_4,
\end{align}
where we have suppressed the $t$-variable dependence in the formulas of $\mathrm{I}_1-\mathrm{I}_4$.
Noting that from \eqref{DxY} and \eqref{Av},
\begin{equation}\label{Av-es1}
    A_{v_1}(t,y)-A_{v_2}(t,y) =\sum_{k=1}^\infty \sum_{j=0}^{k-1} (-1)^k (C_{v_1}(t,y))^j (C_{v_2}(t,y))^{k-1-j} \int_0^t \nabla_y \delta\bar v(\tau,y)\dd y,
\end{equation}
with $C_{v_i}(t,y)= \int_0^t \nabla \bar{v}_i(\tau,y)\dd y$, $i=1,2$, and using \eqref{bar-v-Lip1}, the interpolation inequality $\|f\|_{L^{\frac{2}{1-\sigma}}(\R^2)}\leq C\|f\|_{L^2(\R^2)}^{1-\sigma} \|\nabla f\|_{L^2(\R^2)}^\sigma$, Young's inequality, we estimate the term $\mathrm{I}_1$ as
\begin{align*}
  \mathrm{I}_1 & \leq \|A_{v_1}-A_{v_2}\|_{L^{\frac{2}{1-\sigma}}} (\|A_{v_1}\|_{L^\infty} + \|A_{v_2}\|_{L^\infty}) \|\bar{\theta}_2(t)\|_{L^{\frac{2}{\sigma}}} \|\mathcal{R}^2\delta\bar{\theta}(t)\|_{L^2} \\
  & \leq C \int_0^t \|\nabla_y \delta\bar{v}(\tau,y)\|_{L^{\frac{2}{1-\sigma}}_y}\dd \tau\, \|\bar{\theta}_2(t)\|_{L^{\frac{2}{\sigma}}} \|\delta\bar{\theta}(t)\|_{L^2} \\
  & \leq \frac{1}{8} \|\delta\bar{\theta}(t)\|_{L^2}^2 + C \Big(\int_0^t \|\nabla \delta\bar{v}(\tau)\|_{L^2} \dd \tau\Big)^{2(1-\sigma)} \Big(\int_0^t \|\nabla^2 \delta\bar{v}(\tau)\|_{L^2}\dd \tau\Big)^{2\sigma}  \|\bar{\theta}_2(t)\|_{L^{\frac{2}{\sigma}}}^2 \\
  & \leq \frac{1}{8} \|\delta\bar{\theta}(t)\|_{L^2}^2 + \frac{1}{8}\Big(\int_0^t \|\nabla^2 \delta\bar{v}(\tau)\|_{L^2}^2\dd \tau\Big)
  +  C t^{\frac{1}{1-\sigma}} \Big(\int_0^t \|\nabla \delta\bar{v}(\tau)\|_{L^2}^2 \dd \tau\Big) \|\bar{\theta}_2(t)\|_{L^{\frac{2}{\sigma}}}^{\frac{2}{1-\sigma}}.
\end{align*}
For the term $\mathrm{I}_2$, observing that
\begin{equation}\label{nab-Av}
  \nabla A_{v_i}(t,y)=\sum_{k=0}^\infty (k+1) \big(C_{v_i}(t,y)\big)^k \nabla_y^2 X_{v_i}(t,y),
\end{equation}
and also
\begin{equation}\label{nab-Av2}
\begin{split}
  & \nabla A_{v_1}(t,y) - \nabla A_{v_2}(t,y) \\
  = & \sum_{k=0}^\infty (k+1) \big(C_{v_2}(t,y)\big)^k \int_0^t \nabla_y^2 \delta \bar{v}(\tau,y)\dd\tau \, + \\
  & + \sum_{k=1}^\infty \sum_{j=0}^{k-1} (k+1)\big(C_{v_1}(t,y)\big)^j (C_{v_2(t,y)})^{k-1-j} \Big(\int_0^t \nabla_y\delta\bar{v}(\tau,y)\dd\tau\Big) \nabla_y^2 X_{v_1}(t,y) ,
\end{split}
\end{equation}
and by using estimates \eqref{bar-v-Lip1}, \eqref{Avbd1}, we find that
\begin{align*}
  \mathrm{I}_2 \leq & \,C \|\Lambda^{-1}\mathcal{R}\delta\bar{\theta}(t)\|_{L^{\frac{2}{1-\sigma}}} \|\bar{\theta}_2(t)\|_{L^{\frac{2}{\sigma}}} \Big(\int_0^t \|\nabla_y^2\delta\bar{v}(\tau,y)\|_{L^2_y}\dd \tau \Big) \\
  & + C \|\Lambda^{-1}\mathcal{R}\delta\bar{\theta}(t)\|_{L^{\frac{2}{1-\sigma}}} \Big(\int_0^t \|\nabla_y \delta\bar{v}(\tau,y)\|_{L^{\frac{2}{1-\sigma}}_y}\dd\tau\Big) \|\nabla_y^2 X_{v_1}(t,y)\|_{L^{\frac{2}{\sigma}}_y} \|\bar{\theta}_2(t)\|_{L^{\frac{2}{\sigma}}} \\
  \leq &\, C t^{\frac{1}{2}} \|\Lambda^{-1}\delta\bar{\theta}(t)\|_{L^2}^{1-\sigma} \| \delta\bar{\theta}(t)\|_{L^2}^\sigma \|\bar{\theta}_2(t)\|_{L^{\frac{2}{\sigma}}} \Big(\int_0^t \|\nabla^2\delta\bar{v}(\tau)\|_{L^2}^2\dd \tau\Big)^{1/2} \\
  & + C \|\Lambda^{-1}\delta\bar{\theta}(t)\|_{L^2}^{1-\sigma} \|\delta\bar{\theta}(t)\|_{L^2}^\sigma \Big(\int_0^t \|\nabla \delta\bar{v}\|_{L^2} \dd \tau\Big)^{1-\sigma} \Big(\int_0^t \|\nabla^2 \delta\bar{v}\|_{L^2}\dd \tau\Big)^\sigma
  \|\nabla^2 X_{v_1}(t)\|_{L^{\frac{2}{\sigma}}} \|\bar{\theta}_2(t)\|_{L^{\frac{2}{\sigma}}} \\
  \leq &\, \frac{1}{8} \|\delta\bar{\theta}(t)\|_{L^2}^2 + C t^{\frac{1}{2-\sigma}}  \|\Lambda^{-1}\delta\bar{\theta}(t)\|_{L^2}^{\frac{2(1-\sigma)}{2-\sigma}} \|\bar{\theta}_2(t)\|_{L^{\frac{2}{\sigma}}}^{\frac{2}{2-\sigma}}
  \Big(\int_0^t \|\nabla^2\delta\bar{v}(\tau)\|_{L^2}^2\dd \tau\Big)^{\frac{1}{2-\sigma}} \\
  & + C t^{\frac{\sigma}{2-\sigma}}  \|\Lambda^{-1}\delta\bar{\theta}(t)\|_{L^2}^{\frac{2(1-\sigma)}{2-\sigma}} \Big(\int_0^t \|\nabla \delta\bar{v}\|_{L^2} \dd \tau\Big)^{\frac{2(1-\sigma)}{2-\sigma}} \Big(\int_0^t \|\nabla^2\delta\bar{v}\|_{L^2}^2\dd \tau\Big)^{\frac{\sigma}{2-\sigma}}
  \|\nabla^2 X_{v_1}(t)\|_{L^{\frac{2}{\sigma}}}^{\frac{2}{2-\sigma}} \|\bar{\theta}_2(t)\|_{L^{\frac{2}{\sigma}}}^{\frac{2}{2-\sigma}} \\
  \leq &\, \frac{1}{8} \|\delta\bar{\theta}(t)\|_{L^2}^2 + \frac{1}{8}\int_0^t \|\nabla^2\delta\bar{v}(\tau)\|_{L^2}^2\dd \tau +  C t^{\frac{1}{1-\sigma}} \|\Lambda^{-1}\delta\bar{\theta}(t)\|_{L^2}^2 \|\bar{\theta}_2(t)\|_{L^{\frac{2}{\sigma}}}^{\frac{2}{1-\sigma}} \\
  & + C t^{\frac{2-\sigma}{2-2\sigma}} \|\Lambda^{-1}\delta\bar{\theta}(t)\|_{L^2}\Big(\int_0^t \|\nabla \delta\bar{v}(\tau)\|_{L^2} \dd \tau\Big) \|\nabla^2 X_{v_1}(t)\|_{L^{\frac{2}{\sigma}}}^{\frac{1}{1-\sigma}} \|\bar{\theta}_2(t)\|_{L^{\frac{2}{\sigma}}}^{\frac{1}{1-\sigma}}.
\end{align*}
For $\mathrm{I}_3$, by virtue of estimates \eqref{Av}, \eqref{DxY} and \eqref{bar-v-Lip1}, we deduce
\begin{align*}
  \mathrm{I}_3 & \leq 2 \|\mathrm{Id}-A_{v_1}(t)\|_{L^\infty} \|A_{v_1}(t)\|_{L^\infty} \|\delta \bar{\theta}(t)\|_{L^2} \|\mathcal{R}^2 \delta \bar{\theta}(t)\|_{L^2} \\
  & \leq 8 \Big(\int_0^t \|\nabla \bar{v}_1(\tau)\|_{L^\infty}\dd \tau\Big) \|\delta \bar{\theta}(t)\|_{L^2}^2
  \leq \frac{1}{8} \|\delta \bar{\theta}(t)\|_{L^2}^2,
\end{align*}
where we have let the constant $c_0>0$ in \eqref{bar-v-Lip1} be such that $c_0 <\frac{1}{64}$.
For $\mathrm{I}_4$, thanks to \eqref{bar-v-Lip1}, \eqref{nab-Av}, the interpolation inequality and Young's inequality again, we infer
\begin{align*}
  \mathrm{I}_4 & \leq 2 \|\nabla A_{v_1}(t)\|_{L^{\frac{2}{\sigma}}} \|A_{v_1}(t)\|_{L^\infty} \|\delta \bar{\theta}(t)\|_{L^2} \|\mathcal{R}\Lambda^{-1}(\delta\bar{\theta})(t)\|_{L^{\frac{2}{1-\sigma}}} \\
  & \leq C  \|\delta \bar{\theta}(t)\|_{L^2}^{1+\sigma} \|\Lambda^{-1}\delta\bar{\theta}(t)\|_{L^2}^{1-\sigma} \|\nabla^2 X_{v_1}(t)\|_{L^{\frac{2}{\sigma}}} \\
  & \leq \frac{1}{8}  \|\delta \bar{\theta}(t)\|_{L^2}^2 + C \|\Lambda^{-1}\delta\bar{\theta}(t)\|_{L^2}^2\|\nabla^2 X_{v_1}(t)\|_{L^{\frac{2}{\sigma}}}^{\frac{2}{1-\sigma}} .
\end{align*}
Gathering \eqref{del-the-es1} and the above estimates on $\mathrm{I}_1-\mathrm{I}_4$, we integrate on the time interval $[0,T']$ to derive
\begin{align}
  & \|\Lambda^{-1}\delta\bar{\theta}\|_{L^\infty_{T'} (L^2)}^2 + \|\delta\bar{\theta}\|_{L^2_{T'}(L^2)}^2 \nonumber \\
  \leq & \frac{T'}{2} \|\nabla^2 \delta\bar{v}\|_{L^2_{T'}(L^2)}^2 + C\|\Lambda^{-1}\delta \bar{\theta}\|_{L^\infty_{T'}(L^2)}^2
  \big(T'^{\frac{2-\sigma}{1-\sigma}}\|\bar{\theta}_2\|_{L^\infty_{T'}(L^{\frac{2}{\sigma}})}^{\frac{2}{1-\sigma}} + T' \|\nabla^2 X_{v_1}\|_{L^\infty_{T'} (L^{\frac{2}{\sigma}})}^{\frac{2}{1-\sigma}} \big) \nonumber \\
  & + C T'^{\frac{6-5\sigma}{2-2\sigma}} \|\Lambda^{-1}\delta \bar{\theta}\|_{L^\infty_{T'}(L^2)} \|\nabla \delta\bar{v}\|_{L^\infty_{T'}(L^2)} \|\bar{\theta}_2\|_{L^\infty_{T'}(L^{\frac{2}{\sigma}})}^{\frac{1}{1-\sigma}} \|\nabla^2 X_{v_1}\|_{L^\infty_{T'} (L^{\frac{2}{\sigma}})}^{\frac{1}{1-\sigma}} \nonumber\\
  \leq & \frac{T'}{2} \|\nabla^2 \delta\bar{v}\|_{L^2_{T'}(L^2)}^2 + C\|\Lambda^{-1}\delta \bar{\theta}\|_{L^\infty_{T'}(L^2)}^2
  \big(T'^{\frac{2-\sigma}{1-\sigma}}\|\bar{\theta}_2\|_{L^\infty_{T'} (L^{\frac{2}{\sigma}})}^{\frac{2}{1-\sigma}} + T' \|\nabla^2 X_{v_1}\|_{L^\infty_{T'} (L^{\frac{2}{\sigma}})}^{\frac{2}{1-\sigma}} \big) \nonumber \\
   & + C T'^4 \|\nabla \delta\bar{v}\|_{L^\infty_{T'}(L^2)}^2   \|\nabla^2 X_{v_1}\|_{L^\infty_{T'}( L^{\frac{2}{\sigma}})}^{\frac{2}{1-\sigma}}.
\end{align}
From the continuous embedding $B^{2-\sigma}_{\frac{4}{4-\sigma},\infty}(\R^2)\hookrightarrow B^{2-\frac{3}{2}\sigma}_{\frac{4}{4-\sigma},1}(\R^2)\hookrightarrow L^{\frac{2}{\sigma}}(\R^2) $
and estimate \eqref{the-v-apEs},
we know that
\begin{equation*}
  \|\bar{\theta}_2\|_{L^\infty_T (L^{\frac{2}{\sigma}})} \leq \|\theta_2\|_{L^\infty_T (L^{\frac{2}{\sigma}})}\leq C \|\theta_2\|_{L^\infty_T (B^{2-\sigma}_{\frac{4}{4-\sigma},\infty})} <\infty,
\end{equation*}
and $\|\nabla_x^2 v_1\|_{L^\infty_T (L^{\frac{2}{\sigma}})} \leq C\|\nabla^2_x v_1\|_{L^\infty_T (B^{2-\sigma}_{\frac{4}{4-\sigma},\infty})} \leq C\|\nabla_x v_1\|_{L^\infty_T (B^{3-\sigma}_{\frac{4}{4-\sigma},\infty})}<\infty$, and
noting that
\begin{equation*}
  \nabla^2_y X_{v_1}(t,y)= \int_0^t \nabla_y^2\bar{v}_1(\tau,y)\dd \tau = \int_0^t \nabla_y X_{v_1}\cdot\nabla_x^2v_1(\tau,X_{v_1})\cdot\nabla_y X_{v_1}\dd \tau + \int_0^t \nabla_x v_1(\tau,X_{v_1})\cdot \nabla_y^2X_{v_1}(\tau,y)\dd \tau ,
\end{equation*}
we get
\begin{equation}\label{nab2-Xv}
  \|\nabla^2 X_{v_1}\|_{L^\infty_T (L^{\frac{2}{\sigma}})}\leq e^{\int_0^T \|\nabla_x v_1(\tau)\|_{L^\infty} \dd\tau} \int_0^T \|\nabla X_{v_1}(\tau)\|_{L^\infty}^2 \|\nabla_x^2 v_1(\tau)\|_{L^{\frac{2}{\sigma}}} \dd \tau <\infty,
\end{equation}
thus by letting $T'>0$ be sufficiently small so that
\begin{equation}\label{Tasum1}
  C\Big(T'^{\frac{2-\sigma}{1-\sigma}}\|\bar{\theta}_2\|_{L^\infty_T (L^{\frac{2}{\sigma}})}^{\frac{2}{1-\sigma}} + T' \|\nabla^2 X_{v_1}\|_{L^\infty_T (L^{\frac{2}{\sigma}})}^{\frac{2}{1-\sigma}} \Big) \leq \frac{1}{2},
\end{equation}
it leads to that
\begin{align}\label{del-the-es2}
  \frac{1}{2}\|\Lambda^{-1}\delta\bar{\theta}\|_{L^\infty_{T'} (L^2)}^2 + \|\delta\bar{\theta}\|_{L^2_{T'}(L^2)}^2
  \leq \frac{T'}{2} \|\nabla^2 \delta\bar{v}\|_{L^2_{T'}(L^2)}^2 + C T'^4 \|\nabla \delta\bar{v}\|_{L^\infty_{T'} (L^2)}^2   \|\nabla^2 X_{v_1}\|_{L^\infty_{T'} (L^{\frac{2}{\sigma}})}^{\frac{2}{1-\sigma}}.
\end{align}

Now we turn to the estimation of $\delta\bar{v}$. Owing to Lemma \ref{lem:Stokes}, we have
\begin{align}\label{II-decom}
  & \|\nabla \delta\bar{v}\|_{L^\infty_{T'} (L^2)} + \|(\partial_t \delta \bar{v}, \nabla^2 \delta\bar{v},\nabla\delta\bar{p})\|_{L^2_{T'} (L^2)} \nonumber \\
  \leq &\,C \|\delta\bar{\theta}\|_{L^2_{T'} (L^2)} + C \|\divg((A_{v_1} A_{v_1}^T -A_{v_2}A_{v_2}^T)\nabla \bar{v}_2)\|_{L^2_{T'} (L^2)} + C \|(A_{v_1}^T-A_{v_2}^T)\nabla \bar{p}_2\|_{L^2_{T'} (L^2)} \nonumber \\
  & + C \|\divg((\mathrm{Id}-A_{v_1}A_{v_1}^T)\nabla\delta\bar{v})\|_{L^2_{T'} (L^2)} + C\|(\mathrm{Id}-A_{v_1}^T)\nabla\delta\bar{p}\|_{L^2_{T'} (L^2)} + C\|\partial_t((A_{v_1}-A_{v_2})\bar{v}_2)\|_{L^2_{T'} (L^2)} \nonumber \\
  & + C \|\partial_t((\mathrm{Id}-A_{v_1})\delta\bar{v})\|_{L^2_{T'} (L^2)} + C \|\nabla \divg((A_{v_1}-A_{v_2})\bar{v}_2)\|_{L^2_{T'} (L^2)} + C \|\nabla\divg((\mathrm{Id}-A_{v_1})\delta\bar{v})\|_{L^2_{T'} (L^2)} \nonumber \\
  := & \,\mathrm{II}_1 + \mathrm{II}_2 + \mathrm{II}_3 + \mathrm{II}_4 + \mathrm{II}_5 + \mathrm{II}_6 + \mathrm{II}_7 + \mathrm{II}_8 + \mathrm{II}_9 .
\end{align}
For $\mathrm{II}_2$, similarly as the estimation of $\mathrm{I}_2$, from formulas \eqref{bar-v-Lip1}, \eqref{Av-es1} and \eqref{nab-Av2} we have
\begin{align*}
  \mathrm{II}_2 \leq & C \|\nabla(A_{v_1} A_{v_1}^T -A_{v_2}A_{v_2}^T)\,\cdot \nabla\bar{v}_2\|_{L^2_{T'} L^2} + C\|(A_{v_1} A_{v_1}^T -A_{v_2}A_{v_2}^T)\cdot\nabla^2 \bar{v}_2\|_{L^2_{T'} L^2} \\
  \leq & C T'^{\frac{1}{2}}\|\nabla \bar{v}_2\|_{L^\infty_{T'}(L^\infty)} \Big\|\int_0^t \nabla^2 \delta\bar{v}(\tau,y)\dd \tau\Big\|_{L^\infty_{T'}(L^2_y)} + \\
  & +  C T'^{\frac{1}{2}} \Big\|\int_0^t\nabla \delta\bar{v}(\tau,y)\dd\tau\Big\|_{L^\infty_{T'}(L^{\frac{2}{1-\sigma}}_y)}
  \Big(\|\nabla \bar{v}_2\|_{L^\infty_{T'}(L^\infty)} \|\nabla_y^2 X_{v_1}\|_{L^\infty_{T'} (L^{\frac{2}{\sigma}})} + \|\nabla^2 \bar{v}_2\|_{L^\infty_{T'} (L^{\frac{2}{\sigma}})} \Big) \\
  \leq & C T' \|\nabla \bar{v}_2\|_{L^\infty_{T'}(L^\infty)} \|\nabla^2 \delta\bar{v}\|_{L^2_{T'}(L^2)} + \\
  & + C T'^{\frac{3-\sigma}{2}} \|\nabla \delta\bar{v}\|_{L^\infty_{T'} (L^2)}^{1-\sigma} \|\nabla^2\delta\bar{v}\|_{L^2_{T'} (L^2)}^\sigma
  \Big(\|\nabla \bar{v}_2\|_{L^\infty_{T'}(L^\infty)} \|\nabla_y^2 X_{v_1}\|_{L^\infty_{T'} (L^{\frac{2}{\sigma}})} + \|\nabla^2 \bar{v}_2\|_{L^\infty_{T'} (L^{\frac{2}{\sigma}})} \Big) \\
  \leq &  \Big(\frac{1}{16} + C c_0\Big) \|\nabla^2 \delta\bar{v}\|_{L^2_{T'}(L^2)} + \\
  & + C T'^{\frac{3-\sigma}{2(1-\sigma)}} \|\nabla \delta\bar{v}\|_{L^\infty_{T'} (L^2)}
  \Big(\|\nabla \bar{v}_2\|_{L^\infty_{T'}(L^\infty)} \|\nabla^2 X_{v_1}\|_{L^\infty_{T'} (L^{\frac{2}{\sigma}})} + \|\nabla^2 \bar{v}_2\|_{L^\infty_{T'} (L^{\frac{2}{\sigma}})} \Big)^{\frac{1}{1-\sigma}},
\end{align*}
with $c_0\in ]0,\frac{1}{2}[$ the constant appearing in \eqref{bar-v-Lip1} and chosen later.
For $\mathrm{II}_3$, by using \eqref{bar-v-Lip1} and \eqref{Av-es1}, it gives
\begin{align*}
  \mathrm{II}_3 & \leq C T'^{\frac{1}{2}} \|A_{v_1}-A_{v_2}\|_{L^\infty_{T'}(L^{\frac{2}{1-\sigma}})} \|\nabla \bar{p}_2\|_{L^\infty_{T'}(L^{\frac{2}{\sigma}})} \\
  & \leq  C  T'^{\frac{1}{2}} \Big\|\int_0^t\nabla \delta\bar{v}(\tau,y)\dd\tau\Big\|_{L^\infty_{T'}(L^{\frac{2}{1-\sigma}}_y)} \|\nabla \bar{p}_2\|_{L^\infty_{T'}(L^{\frac{2}{\sigma}})} \\
  & \leq C T'^{\frac{3-\sigma}{2}} \|\nabla \delta\bar{v}\|_{L^\infty_{T'} (L^2)}^{1-\sigma} \|\nabla^2\delta\bar{v}\|_{L^2_{T'} (L^2)}^\sigma \|\nabla \bar{p}_2\|_{L^\infty_{T'}(L^{\frac{2}{\sigma}})} \\
  & \leq \frac{1}{16} \|\nabla^2 \delta\bar{v}\|_{L^2_{T'}(L^2)} + C T'^{\frac{3-\sigma}{2(1-\sigma)}} \|\nabla \delta\bar{v}\|_{L^\infty_{T'}(L^2)}
  \|\nabla \bar{p}_2\|_{L^\infty_{T'}(L^{\frac{2}{\sigma}})}^{\frac{1}{1-\sigma}}  .
\end{align*}
For $\mathrm{II}_4$ and $\mathrm{II}_5$, by virtue of formulas \eqref{Avbd1}, \eqref{bar-v-Lip1} and \eqref{nab-Av}, we deduce that
\begin{align*}
  \mathrm{II}_4 & \leq C \|\nabla(A_{v_1}A_{v_1}^T)\,\nabla\delta\bar{v}\|_{L^2_{T'}L^2} + C \|(\mathrm{Id}-A_{v_1} A_{v_1}^T)\cdot \nabla^2\delta\bar{v}\|_{L^2_{T'}(L^2)} \\
  & \leq C \|\nabla^2 X_{v_1}\|_{L^\infty_{T'} (L^{\frac{2}{\sigma}})} \|\nabla \delta\bar{v}\|_{L^2_{T'}(L^{\frac{2}{1-\sigma}})} + C \|\mathrm{Id}-A_{v_1}\|_{L^\infty_{T'} (L^\infty)} \|\nabla^2\delta\bar{v}\|_{L^2_{T'}(L^2)} \\
  & \leq C T'^{\frac{1-\sigma}{2}} \|\nabla^2 X_{v_1}\|_{L^\infty_{T'} (L^{\frac{2}{\sigma}})} \|\nabla \delta\bar{v}\|_{L^\infty_{T'} (L^2)}^{1-\sigma} \|\nabla^2 \delta\bar{v}\|_{L^2_{T'}(L^2)}^\sigma
  + C \Big(\int_0^{T'}\|\nabla \bar{v}_1(t)\|_{L^\infty}\dd t\Big) \|\nabla^2 \delta\bar{v}\|_{L^\infty_{T'} (L^2)} \\
  & \leq \Big(\frac{1}{16} + C c_0\Big)\|\nabla^2 \delta\bar{v}\|_{L^\infty_{T'} (L^2)} + C T'^{\frac{1}{2}} \|\nabla^2 X_{v_1}\|_{L^\infty_{T'} (L^{\frac{2}{\sigma}})}^{\frac{1}{1-\sigma}} \|\nabla \delta\bar{v}\|_{L^\infty_{T'} (L^2)},
\end{align*}
and
\begin{align*}
  \mathrm{II}_5 \leq 2 C \Big(\int_0^{T'}\|\nabla \bar{v}_1(t)\|_{L^\infty}\dd t\Big) \|\nabla \delta \bar{p}\|_{L^2_{T'}(L^2)} \leq 2C c_0\,  \|\nabla \delta \bar{p}\|_{L^2_{T'}(L^2)}.
\end{align*}
For $\mathrm{II}_6$, noting that
\begin{equation}\label{par-t-Av}
  \partial_t A_{v_i}(t,y) = \sum_{k=1}^\infty k (C_{v_i}(t,y))^{k-1} \,\nabla\bar{v}_i(t,y)
\end{equation}
and
\begin{equation*}
\begin{split}
  \partial_t A_{v_1}(t,y)- \partial_t A_{v_2}(t,y) 
  = \,& \nabla\delta \bar v(t,y) \sum_{k=1}^\infty (-1)^k k (C_{v_2}(t,y))^{k-1} \\
  & + \nabla \bar{v}_1(t,y)\sum_{k=2}^\infty\sum_{j=0}^{k-2} (-1)^k k (C_{v_1}(t,y))^j (C_{v_2}(t,y))^{k-2-j} \int_0^t \nabla_y \delta\bar v(\tau,y)\dd y ,
\end{split}
\end{equation*}
with $C_{v_i}(t,y)= \int_0^t \nabla \bar{v}_i(\tau,y)\dd y$, $i=1,2$, and applying estimate \eqref{bar-v-Lip1}, it follows that
\begin{align*}
  \mathrm{II}_6 & \leq C \|\partial_t(A_{v_1}-A_{2})\,\bar{v}_2\|_{L^2_{T'}(L^2)} + C \|(A_{v_1}-A_{v_2})\,\partial_t \bar{v}_2\|_{L^2_{T'}(L^2)} \\
  & \leq C T'^{\frac{1}{2}} \|\partial_t(A_{v_1}-A_{v_2})\|_{L^\infty_{T'}(L^2)} \|\bar{v}_2\|_{L^\infty_{T'}(L^\infty)}
  + C T'^{\frac{1}{2}} \|A_{v_1}-A_{v_2}\|_{L^\infty_{T'}(L^{\frac{2}{1-\sigma}})} \|\partial_t\bar{v}_2\|_{L^\infty_{T'}(L^{\frac{2}{\sigma}})} \\
  & \leq C T'^{\frac{1}{2}} \|\nabla\delta\bar{v}\|_{L^\infty_{T'} (L^2)} \|\bar{v}_2\|_{L^\infty_{T'}(L^\infty)} 
  + C T'^{\frac{3-\sigma}{2}} \|\nabla \delta\bar{v}\|_{L^\infty_{T'} (L^2)}^{1-\sigma} \|\nabla^2\delta\bar{v}\|_{L^2_{T'} (L^2)}^\sigma \|\partial_t\bar{v}_2\|_{L^\infty_{T'}(L^{\frac{2}{\sigma}})} \\
  & \leq \frac{1}{16}\|\nabla^2\delta\bar{v}\|_{L^2_{T'} (L^2)} + C \|\nabla \delta\bar{v}\|_{L^\infty_{T'} (L^2)} \Big(T'^{\frac{1}{2}} \|\bar{v}_2\|_{L^\infty_{T'}(L^\infty)}
  + T'^{\frac{3-\sigma}{2(1-\sigma)}} \|\partial_t\bar{v}_2\|_{L^\infty_{T'}(L^{\frac{2}{\sigma}})}^{\frac{1}{1-\sigma}}\Big).
\end{align*}
For $\mathrm{II}_7$, by virtue of \eqref{bar-v-Lip1} and \eqref{par-t-Av}, we get
\begin{align*}
  \mathrm{II}_7 & \leq C \|\partial_t A_{v_1} \,\delta\bar{v}\|_{L^2_{T'}(L^2)} + \|(\mathrm{Id}-A_{v_1})\, \partial_t\delta\bar{v}\|_{L^2_{T'}(L^2)} \\
  & \leq C T'^{\frac{1}{2}} \|\nabla\bar{v}_1\|_{L^\infty_{T'}(L^\infty)} \|\delta\bar{v}\|_{L^\infty_{T'}(L^2)} + C \|\mathrm{Id}-A_{v_1}\|_{L^\infty_{T'}(L^\infty)} \|\partial_t\delta\bar{v}\|_{L^2_{T'}(L^2)} \\
  & \leq C T' \|\nabla\bar{v}_1\|_{L^\infty_{T'}(L^\infty)} \|\partial_t\delta\bar{v}\|_{L^2_{T'}(L^2)} + C \Big(\int_0^{T'}\|\nabla_y\bar{v}_1(\tau)\|_{L^\infty}\dd \tau\Big) \|\partial_t\delta\bar{v}\|_{L^2_{T'}(L^2)} \\
  & \leq C c_0 \|\partial_t\delta\bar{v}\|_{L^2_{T'}(L^2)},
\end{align*}
where in the third line we have used the following estimate that for all $t\in [0,T']$,
\begin{equation}\label{eq:keyfact}
  \|\delta\bar{v}(t)\|_{L^2} = \|\delta\bar{v}(t)-\delta\bar{v}(0)\|_{L^2} \leq \int_0^t \|\partial_\tau \delta\bar{v}(\tau)\|_{L^2}\dd \tau \leq T'^{\frac{1}{2}}\|\partial_t \delta\bar{v}\|_{L^2_{T'}(L^2)}.
\end{equation}
Observing that $\divg((A_{v_1}-A_{v_2})\bar{v}_2) = (A_{v_1}-A_{v_2}): \nabla \bar{v}_2$ and $\divg((\mathrm{Id}-A_{v_1})\delta\bar{v})= (\mathrm{Id}-A_{v_1}):\nabla \delta\bar{v}$, and by arguing as the estimation of $\mathrm{II}_2$ and $\mathrm{II}_4$,
the terms $\mathrm{II}_8$ and $\mathrm{II}_9$ can be estimated as follows
\begin{align*}
  \mathrm{II}_8 \leq & C\|\nabla(A_{v_1}-A_{v_2}): \nabla\bar{v}_2\|_{L^2_{T'}(L^2)} + C \|(A_{v_1}-A_{v_2}):\nabla^2 \bar{v}_2\|_{L^2_{T'}(L^2)} \\
  \leq &   \Big( \frac{1}{16} +Cc_0\Big) \|\nabla^2 \delta\bar{v}\|_{L^2_{T'}(L^2)} + \\
  & + C T'^{\frac{3-\sigma}{2(1-\sigma)}} \|\nabla \delta\bar{v}\|_{L^\infty_{T'} (L^2)}
  \Big(\|\nabla \bar{v}_2\|_{L^\infty_{T'}(L^\infty)} \|\nabla^2 X_{v_1}\|_{L^\infty_{T'} (L^{\frac{2}{\sigma}})} + \|\nabla^2 \bar{v}_2\|_{L^\infty_{T'} (L^{\frac{2}{\sigma}})} \Big)^{\frac{1}{1-\sigma}},
\end{align*}
and
\begin{align*}
  \mathrm{II}_9 \leq & C \|\nabla A_{v_1} : \nabla \delta\bar{v}\|_{L^2_{T'}(L^2)} + C \|(\mathrm{Id}-A_{v_1}):\nabla^2\delta\bar{v}\|_{L^2_{T'}(L^2)} \\
  \leq & \Big(\frac{1}{16} + C c_0\Big)\|\nabla^2 \delta\bar{v}\|_{L^\infty_{T'} (L^2)}
  + C T'^{\frac{1}{2}} \|\nabla^2 X_{v_1}\|_{L^\infty_{T'} (L^{\frac{2}{\sigma}})}^{\frac{1}{1-\sigma}} \|\nabla \delta\bar{v}\|_{L^\infty_{T'} (L^2)}.
\end{align*}
Collecting formulas \eqref{II-decom}, \eqref{del-the-es2} and the above estimates on $\mathrm{II}_2-\mathrm{II}_9$, we obtain
\begin{equation}\label{eq:del-v-sum}
\begin{split}
  & \|\nabla \delta\bar{v}\|_{L^\infty_{T'} (L^2)} + \|\partial_t \delta \bar{v} \|_{L^2_{T'}(L^2)} + \frac{5}{8} \|\nabla^2 \delta\bar{v}\|_{L^2_{T'}(L^2)} + \|\nabla\delta\bar{p}\|_{L^2_{T'} (L^2)} \\
  \leq &  \big(T'^{\frac{1}{2}} + 4C c_0\big)\|\nabla^2 \delta\bar{v}\|_{L^2_{T'}(L^2)} + 2C c_0\,  \|\nabla \delta \bar{p}\|_{L^2_{T'}(L^2)}  + C c_0 \|\partial_t\delta\bar{v}\|_{L^2_{T'}(L^2)}   \\
  & + C T'^{\frac{3-\sigma}{2(1-\sigma)}}
  \Big(\|\nabla \bar{v}_2\|_{L^\infty_{T'}(L^\infty)} \|\nabla^2 X_{v_1}\|_{L^\infty_{T'} (L^{\frac{2}{\sigma}})} +\|\nabla^2 \bar{v}_2\|_{L^\infty_{T'} (L^{\frac{2}{\sigma}})} \Big)^{\frac{1}{1-\sigma}} \|\nabla \delta\bar{v}\|_{L^\infty_{T'} (L^2)}\\
  & +C T'^{\frac{3-\sigma}{2(1-\sigma)}} \Big(\|\nabla \bar{p}_2\|_{L^\infty_{T'}(L^{\frac{2}{\sigma}})}^{\frac{1}{1-\sigma}} + \|\partial_t\bar{v}_2\|_{L^\infty_{T'}(L^{\frac{2}{\sigma}})}^{\frac{1}{1-\sigma}} \Big) \|\nabla \delta\bar{v}\|_{L^\infty_{T'} (L^2)} \\
  & +  C T'^{\frac{1}{2}} \Big(\|\nabla^2 X_{v_1}\|_{L^\infty_{T'} (L^{\frac{2}{\sigma}})}^{\frac{1}{1-\sigma}} +  \|\bar{v}_2\|_{L^\infty_{T'}(L^\infty)}\Big) \|\nabla \delta\bar{v}\|_{L^\infty_{T'} (L^2)} .
\end{split}
\end{equation}
Noting that from estimates \eqref{the-v-apEs}, \eqref{DXvest}, \eqref{bar-v-Lip1}, \eqref{nab2-Xv},
\begin{align*}
  \|\nabla^2 \bar{v}_2\|_{L^\infty_T (L^{\frac{2}{\sigma}})} & \leq \|\nabla X_{v_2}\|_{L^\infty_T (L^\infty)}^2 \|\nabla^2_x v_2(t,X_{v_2})\|_{L^\infty_T (L^{\frac{2}{\sigma}})}
  + \|\nabla_x v_2(t, X_{v_2})\|_{L^\infty_T (L^\infty)} \|\nabla^2 X_{v_2}\|_{L^\infty_T (L^{\frac{2}{\sigma}})} \\
  & \leq C \|\nabla X_{v_2}\|_{L^\infty_T (L^\infty)}^2 \|\nabla v\|_{L^\infty_T(B^{3-\sigma}_{\frac{4}{4-\sigma},\infty})} + \|\nabla v_2\|_{L^\infty_T (L^\infty)}
  \|\nabla^2 X_{v_2}\|_{L^\infty_T (L^{\frac{2}{\sigma}})} <\infty,
\end{align*}
and $\|\bar{v}_2\|_{L^\infty_T (L^\infty)} \leq \|v_2\|_{L^\infty_T (L^\infty)}$, and
\begin{align*}
  & \|\nabla\bar{p}_2\|_{L^\infty_T (L^{\frac{2}{\sigma}})} + \|\partial_t\bar{v}_2\|_{L^\infty_T(L^{\frac{2}{\sigma}})} \\
  \leq &  \|\nabla_x p_2(t, X_{v_2})\|_{L^\infty_T (L^{\frac{2}{\sigma}})}
  \|\nabla X_{v_2}\|_{L^\infty_T (L^\infty)} + \|\partial_t v(t,X_{v_2})\|_{L^\infty_T (L^{\frac{2}{\sigma}})} + \|v_2(t,X_{v_2})\|_{L^\infty_T(L^{\frac{2}{\sigma}})} \|\partial_t X_{v_2}\|_{L^\infty_T (L^\infty)} \\
  \leq & C \|\nabla p_2\|_{L^\infty_T (B^{2-\sigma}_{\frac{4}{4-\sigma},\infty})}\|\nabla X_{v_2}\|_{L^\infty_T (L^\infty)} + C \|\partial_t v_2\|_{L^\infty_T(B^{2-\sigma}_{\frac{4}{4-\sigma},\infty})}
  + C \|v_2\|_{L^\infty_T (H^1)} \|v_2\|_{L^\infty_T (L^\infty)} <\infty ,
\end{align*}
we infer that by letting $T'>0$ small enough so that the positive constant $c_0$ in \eqref{bar-v-Lip1} satisfies $c_0 \leq \min \{\frac{1}{64}, \frac{1}{64 C}\}$
and also
\begin{equation}\label{Tasum2}
\begin{split}
  & T'\leq \frac{1}{256}, \quad\quad
   C T'^{\frac{3-\sigma}{2(1-\sigma)}} \Big(\|\nabla \bar{v}_2\|_{L^\infty_T (L^\infty)} \|\nabla^2 X_{v_1}\|_{L^\infty_T (L^{\frac{2}{\sigma}})} +\|\nabla^2 \bar{v}_2\|_{L^\infty_T (L^{\frac{2}{\sigma}})} \Big)^{\frac{1}{1-\sigma}} \leq \frac{1}{4}, \\
  & C T'^{\frac{3-\sigma}{2(1-\sigma)}} \Big(\|\nabla \bar{p}_2\|_{L^\infty_T (L^{\frac{2}{\sigma}})}^{\frac{1}{1-\sigma}} + \|\partial_t\bar{v}_2\|_{L^\infty_T(L^{\frac{2}{\sigma}})}^{\frac{1}{1-\sigma}} \Big)
  + C T'^{\frac{1}{2}} \Big(\|\nabla^2 X_{v_1}\|_{L^\infty_T (L^{\frac{2}{\sigma}})}^{\frac{1}{1-\sigma}} +  \|\bar{v}_2\|_{L^\infty_T(L^\infty)}\Big) \leq \frac{1}{4},
\end{split}
\end{equation}
the whole right-hand side of inequality \eqref{eq:del-v-sum} can be absorbed by the left-hand side, which leads to $\|\nabla \delta\bar{v}\|_{L^\infty_{T'} (L^2)}=\|\nabla^2\delta\bar{v}\|_{L^2_{T'} (L^2)}\equiv 0$,
and also by estimate \eqref{del-the-es2}, $\|\Lambda^{-1}\delta\bar{\theta}\|_{L^\infty_{T'}(L^2)} = \|\delta\bar{\theta}\|_{L^2_{T'}(L^2)} \equiv 0$. Since $\delta\bar{v}|_{t=0}\equiv 0$,
we conclude that $\delta\bar{v}=0$ and $\delta\bar\theta=0$ a.e. on $\R^2\times [0,T']$. Repeating the above procedure, we can further prove $\delta\bar{v}=\delta\bar\theta=0$ a.e. on $[T',2T']$, $[2T',3T']$, $\cdots$,
where $T'>0$ is a small constant depending only on the time $T$ and the norms of $(\theta_i,v_i,p_i)$ in estimate \eqref{the-v-apEs} (similar to conditions \eqref{Tasum1} and \eqref{Tasum2}),
hence we finally get $\delta\bar{v}=\delta\bar\theta=0$ and also $X_{v_1}=X_{v_2}$ a.e. on $\R^2\times[0,T]$.
Going back to the Eulerian coordinates implies that $(\mu_1,\theta_1,v_1)=(\mu_2,\theta_2,v_2)$ a.e. on $\R^2\times[0,T]$ and the uniqueness of system \eqref{BoussEq} is followed.

\section{Appendix}\label{sec:append}

\begin{proof}[Proof of Lemma \ref{lem:prodEs}]
(1) For every $q\geq -1$, Bony's decomposition yields that
\begin{align}\label{eq:decom}
  \Delta_q (v\cdot\nabla \theta) = & \sum_{k\geq-1,|k-q|\leq 4} \Delta_q(S_{k-1}v\cdot\nabla \Delta_k \theta)
  + \sum_{k\geq -1,|k-q|\leq 4} \Delta_q (\Delta_k v\cdot\nabla S_{k-1} \theta) \nonumber \\
  & + \sum_{k\geq q-3} \Delta_q (\Delta_k v\cdot\nabla \widetilde{\Delta}_k\theta)
  := \,I_q + II_q + III_q,
\end{align}
where $\Delta_k$ and $S_{k-1}$ are Littlewood-Paley operators introduced in \eqref{LPop} and $\widetilde{\Delta}_k:= \Delta_{k-1}+\Delta_k +\Delta_{k+1}$.
Taking advantage of H\"older's inequality, Bernstein's inequality and the fact that $\|v\|_{L^2}^2 = \sum_{k\geq -1}\|\Delta_k v\|_{L^2}^2$, we get
\begin{align*}
  2^{-qs}\|I_q\|_{L^p(\R^2)} & \leq  C_0  \sum_{k\geq-1,|k-q|\leq 4} 2^{-q s} \|S_{k-1}v\|_{L^\infty(\R^2)} \|\nabla \Delta_k \theta\|_{L^p(\R^2)} \\
  & \leq C_0  \sum_{k\geq-1,|k-q|\leq 4} \Big(\sum_{-1\leq l\leq k-2} \|\Delta_l v\|_{L^\infty(\R^2)}\Big) 2^{k(1-s)}\| \Delta_k \theta\|_{L^p(\R^2)} \\
  & \leq C_0  \sum_{k\geq-1,|k-q|\leq 4} \Big(\|\Delta_{-1}v\|_{L^2(\R^2)} + \sum_{0\leq l\leq k+2} 2^l\|\Delta_l v\|_{L^2(\R^2)}\Big)2^{k(1-s)}\| \Delta_k \theta\|_{L^p(\R^2)} \\
  & \leq C_0  \sum_{k\geq-1,|k-q|\leq 4} \bigg(\|v\|_{L^2(\R^2)} + \Big(\sum_{0\leq l\leq k+2} \|\Delta_l \nabla v\|_{L^2(\R^2)}^2\Big)^{1/2} \bigg) \sqrt{k+2}\, 2^{k(1-s)}\| \Delta_k \theta\|_{L^p(\R^2)}\\
  & \leq C_0 \left(\|v\|_{L^2(\R^2)} + \|\nabla v\|_{L^2(\R^2)}\right) \Big(\sup_{k\geq -1} 2^{k(1-s)} \sqrt{k+2}\|\Delta_k\theta\|_{L^p(\R^2)}\Big),
\end{align*}
and
\begin{align*}
  & 2^{-qs}\|II_q\|_{L^p(\R^2)} \leq  C_0  \sum_{k\geq-1,|k-q|\leq 4} 2^{-q s} \|\Delta_k v\|_{L^\infty(\R^2)} \|\nabla S_{k-1}\theta\|_{L^p(\R^2)} \\
  & \leq  C_0  \sum_{k\geq-1,|k-q|\leq 4} \big(\|\Delta_{-1} v\|_{L^\infty(\R^2)} +\|(\mathrm{Id}-\Delta_{-1})\Delta_k v\|_{L^\infty(\R^2)}  \big)
  \bigg(2^{-k s} \sum_{-1\leq l\leq k-2} 2^l \| \Delta_l\theta\|_{L^p(\R^2)}\bigg) \\
  & \leq  C_0  \sum_{k\geq-1,|k-q|\leq 4} \big(\|v\|_{L^2(\R^2)} +\|(\mathrm{Id}-\Delta_{-1})\Delta_k \nabla v\|_{L^2(\R^2)}  \big)
  \bigg(2^{-k s} \sum_{-1\leq l\leq k-2} 2^{ls} \| \theta\|_{B^{1-s}_{p,\infty}(\R^2)}\bigg) \\
  & \leq C \left(\|v\|_{L^2(\R^2)} + \|\nabla v\|_{L^2(\R^2)}\right) \|\theta\|_{B^{1-s}_{p,\infty}(\R^2)}.
\end{align*}
By using the divergence-free property of $v$, we similarly obtain
\begin{align*}
  2^{-qs}\|III_q\|_{L^p(\R^2)} & \leq  C_0  \sum_{k\geq-1,k\geq q-3} 2^{q(1-s)} \|\Delta_k v\|_{L^\infty(\R^2)} \|\widetilde{\Delta}_k\theta\|_{L^p(\R^2)} \\
  & \leq C_0 \big(\|v\|_{L^2(\R^2)} + \|\nabla v\|_{L^2(\R^2)} \big)
  \sum_{k\geq-1,k\geq q-3} 2^{(q-k)(1-s)} 2^{k(1-s)}\|\widetilde{\Delta}_k\theta\|_{L^p(\R^2)} \\
  & \leq C \left(\|v\|_{L^2(\R^2)} + \|\nabla v\|_{L^2(\R^2)}\right) \|\theta\|_{B^{1-s}_{p,\infty}(\R^2)}.
\end{align*}
Gathering the above estimates leads to estimate \eqref{eq:prodEs}, as desired.
\vskip0.1cm

(2) In order to prove the first inequality of \eqref{eq:prodEs2}, we also have the splitting \eqref{eq:decom}, and by arguing as above, we deduce that for every $q\geq -1$,
\begin{align*}
  2^{qs} \|I_q\|_{L^p(\R^2)} \leq  C_0  \sum_{k\geq-1,|k-q|\leq 4} 2^{q s} \|S_{k-1}v\|_{L^{2p}(\R^2)} \|\nabla \Delta_k \theta\|_{L^{2p}(\R^2)} \leq C_0 \|v\|_{L^{2p}(\R^2)} \|\theta\|_{B^{1+s}_{2p,\infty}(\R^2)} ,
\end{align*}
and
\begin{align*}
  2^{qs}\|II_q\|_{L^p(\R^2)} \leq C_0 \sum_{k\geq -1,|k-q|\leq 4} 2^{qs}\|\Delta_k v\|_{L^{2p}(\R^2)} \|S_{k-1}\nabla \theta\|_{L^{2p}(\R^2)}
  \leq C  \|v\|_{B^s_{2p,\infty}(\R^2)} \|\nabla\theta\|_{L^{2p}(\R^2)},
\end{align*}
and
\begin{align*}
  2^{qs} \|III_q\|_{L^p(\R^2)} & \leq C_0  \sum_{k\geq-1,k\geq q-3} 2^{q(1+s)} \|\Delta_k v\|_{L^{2p}(\R^2)} \|\widetilde{\Delta}_k\theta\|_{L^{2p}(\R^2)} \\
  & \leq C_0 \Big(\sum_{k\geq-1,k\geq q-3} 2^{(q-k)(1+s)}\Big) \|v\|_{L^{2p}(\R^2)} \|\theta\|_{B^{1+s}_{2p,\infty}(\R^2)} \leq C_0  \|v\|_{L^{2p}(\R^2)} \|\theta\|_{B^{1+s}_{2p,\infty}(\R^2)}  .
\end{align*}
Hence, by collecting the above estimates we conclude the first inequality of \eqref{eq:prodEs2}, and then from the continuous embedding $B^s_{2p,\infty}\hookrightarrow L^{2p}$ and $B^{1+s}_{2p,\infty}\hookrightarrow W^{1,2p}$,
the second inequality of \eqref{eq:prodEs2} is directly followed.

(3) The proof of inequality \eqref{eq:prodEs3} can be directly deduced from the Bony's paraproduct estimates as above, and we here omit the details.
\end{proof}
\vskip0.2cm

\textbf{Acknowledgements.}
P. B. Mucha was partially supported by National Science Centre grant  No2018/29/B/ST1/00339 (Opus).
L. Xue was partially supported by National Natural Science Foundation of China (Grants Nos. 11671039 and 11771043).

\end{document}